\documentclass[12pt]{article}
\usepackage{amsmath}
\usepackage{amsfonts}
\usepackage{amsthm}
\usepackage{graphicx}
\usepackage{overpic}
\usepackage{a4wide}
\usepackage{amssymb} 
\usepackage{pictex}
\usepackage{rotating}  
\usepackage{color}
\usepackage{etex} 
\usepackage{enumitem}
\usepackage{accents}

\definecolor{refkey}{rgb}{0,0,1}
\definecolor{labelkey}{rgb}{1,0,0}

\usepackage{comment}

\numberwithin{equation}{section}

\newtheorem{theorem}{Theorem}[section]
\newtheorem{proposition}[theorem]{Proposition}
\newtheorem{lemma}[theorem]{Lemma}
\newtheorem{corollary}[theorem]{Corollary}
\newtheorem{Definition}[theorem]{Definition}
\newenvironment{definition}{\begin{Definition}\rm}{\end{Definition}}
\newtheorem{Remark}[theorem]{Remark}
\newenvironment{remark}{\begin{Remark}\rm}{\end{Remark}}
\newtheorem{Example}[theorem]{Example}
\newenvironment{example}{\begin{Example}\rm}{\end{Example}}
\newtheorem{RHproblem}[theorem]{RH problem}

\usepackage{color}

\newcommand{\C}{\mathbb{C}}

\newcommand{\R}{\mathbb{R}}

\renewcommand{\AA}{\mathcal A}

\newcommand{\KK}{\mathcal K}

\newcommand{\MM}{\mathcal M}

\newcommand{\OO}{\mathcal{O}}

\renewcommand{\Re}{{\rm Re} \,}

\renewcommand{\hat}{\widehat}
\renewcommand{\tilde}{\widetilde}

\def\supp{\mathop{\mathrm{supp}}\nolimits}

\usepackage[bookmarksopen, naturalnames]{hyperref}

\begin{document}
\title{Modified logarithmic potential theory and applications}
\author{Thomas Bloom, Norman Levenberg, Vilmos Totik and Franck Wielonsky}
\maketitle 
%
%
%
\begin{abstract} We develop potential theory including a Bernstein-Walsh type estimate for
functions of the form $p(z)q(f(z))$ where $p,q$ are polynomials and $f$ is
holomorphic. Such functions arise in the study of certain ensembles of
probability measures and our estimates lead to probabilistic results such as large deviation principles. \end{abstract}

\medskip

\section{Introduction}
The classical Bernstein-Walsh inequality establishes growth rates for polynomials $p$ outside of a compact set $K\subset \C$ in terms of the supremum norm of $p$ on $K$ and the degree of $p$:
$$|p(z)|\leq \bigl(\sup_{\zeta \in K}|p(\zeta)|\bigr)e^{deg(p)V_K(z)}=:||p||_Ke^{deg(p)V_K(z)}$$
where $V_K$ is the extremal function for $K$ (see (\ref{forbw})). Given a finite measure $\mu$ on $K$, a Bernstein-Markov type inequality is a comparability between $L^p(\mu)$ norms ($1<p<\infty$) and supremum norms for polynomials of a given degree:
$$||p||_K\leq M_k||p||_{L^p(\mu)} \ \hbox{for polynomials of degree} \ k$$
where $M_k^{1/k}\to 1$. In a series of papers in various potential-theoretic settings (cf., \cite{VELD} and \cite{LPLD}), the authors have studied analogues of these properties. With these inequalities established, using purely potential-theoretic techniques one can prove probabilistic results such as large deviation principles associated to empirical distributions arising from discretizing the associated potential-theoretic energy minimization problem. An essential ingredient is the weighted version of the problem. 

In this paper, given $K\subset \C$, we consider the problem of minimizing the weighted energy 
$$E^Q_f(\mu)=E^Q(\mu):= \int_{K}\int_{K}\log\frac{1}{|x-y||f(x)-f(y)|w(x)w(y)}d\mu(x)d\mu(y)$$
over probability measures $\mu $ on $K$ where $w=e^{-Q}$ is a weight function on $K$ and $f:K\to \C$ is a fixed function. Discretizing the problem, for each $k=1,2,...$, we consider maximizing the weighted  $f-$Vandermonde of order $k$:
$$|VDM_k^Q(z_0,...,z_k)|:=$$
$$|VDM(z_0,...,z_k)|\exp \Big(-k[Q(z_0)+\cdots + Q(z_k)]\Big)|VDM(f(z_0),...,f(z_k))|$$ over $k+1$ tuples of points $z_0,...,z_k\in K$ where
$VDM(z_0,...,z_k)=\prod_{0\leq  i<j\leq k}(z_j-z_i)$
 is the classical Vandermonde determinant. After developing the potential-theoretic background for appropriate $K, Q$ and $f$ in sections 2 and 3, we obtain Bernstein-Walsh type estimates for the ``generalized weighted $f-$polynomials'' 
 $$z_j \to VDM_k^Q(z_0,...,z_k)$$ 
 where $f$ is holomorphic on a neighborhood of $K$. This is a special case of the more general estimates (\ref{BW}) and (\ref{BWQ}) in section 4 for functions of the form
 $$h_k(z)=p_k(g(z))q_k(f(z)), \ p_k, q_k \ \hbox{polynomials of degree} \ k$$
 where $f,g$ are defined and holomorphic on a neighborhood of $K$. 
 
Following standard arguments (cf., \cite{BLmass}), given a measure $\nu$ on $K$ satisfying a mass-density condition, it follows that the $k(k+1)/2$ roots of the averages 
 $$Z_k:=\int_{K^{k+1}} |VDM_k^Q(z_0,...,z_{k})| d\nu(z_0) \cdots d\nu(z_{k})$$ 
 tend to the same limit as the $k(k+1)/2$ roots of the maximal weighted  $f-$Vandermondes $|VDM_k^Q(z_0,...,z_k)|$ over $K^{k+1}$. This has consequences for the empirical distribution associated to the ensemble of probability measures $Prob_k$ on $K^{k+1}$, where, for a Borel set $A\subset K^{k+1}$,
$$Prob_k(A):=\frac{1}{Z_k}\cdot \int_A |VDM_k^Q(z_0,...,z_k)|d\nu (z_0)\cdots d\nu (z_k).$$
These consequences are the main content of section 5, where we restrict to compact $K$. The brief section 6 details the key ingredients needed to make extensions to the unbounded case.

There are numerous articles in the literature where various aspects of the ensembles considered in this paper are studied. For $f(z)=e^z$ and $K=\R$ see Claeys-Wang \cite{CW}. For $f(z)=z^{\theta}, \ \theta >0$ and $K=\R^+$ they were studied by Borodin \cite{Bo}. He named them {\it biorthogonal ensembles}. For $\theta =2$ they were studied in Leuck, Sommers and Zirnbauer \cite{LSZ} motivated by physical considerations. For $\theta$ a positive integer, a large deviation result was proved by Eichelsbacher, Sommerauer and Stolz in \cite{ESS} under some restrictions on $Q$. 

Recent papers of Cheliotis \cite{CH} and Forrester-Wang \cite{FW} exhibit these ensembles as joint probability distributions of eigenvalues of specific ensembles of random matrices. The case $f(z)=\log z$ also occurs this way.

Work of Muttalib \cite{Mu} originally provided impetus for studying these ensembles. He had proposed a correction term to the joint probability distribution of the GUE (Gaussian unitary ensemble) to describe certain physical phenomena. In particular, he proposed to consider $f(z)=\log ({\rm arcsinh} ^2 z^{1/2})$ on $\R^+$.

A paper of Chafai, Gozlan and Zitt \cite{CGZ} establishes a large deviation principle on $\R^d$ under quite general circumstances. Restricted to $\R^2$ or $\R$, there is some overlap with the probabilistic results of this paper.

\section{General potential theory results}
In this section we state and prove results, including existence and uniqueness of weighted energy minimizing measures, in a univariate setting generalizing the classical setting in \cite{ST} (see also \cite{LSS} for a particular case). Recall a set $E\subset \C$ is {\it polar} if there exists $u\not \equiv -\infty$ defined and subharmonic on a neighborhood of $E$ with $E\subset \{u=-\infty\}$ (cf., \cite{ST}). We use the terminology that a property holds q.e. (quasi-everywhere) on a set $S\subset \C$ if it holds on $S\setminus P$ where $P$ is a polar set. In \cite{ST}, given a compact, nonpolar set $K\subset \C$, a real-valued function $Q$ on $K$ is called {\it admissible} if $Q$ is lower semicontinuous and $\{z\in K: Q(z) <\infty\}$ is not polar. We write $Q\in \AA(K)$ and define $w(z):=e^{-Q(z)}$. If $K$ is closed but unbounded, one requires that
\begin{equation}\label{defadm}\liminf_{|z|\to \infty, \ z\in K}[Q(z)- \frac{1}{2}\log (1+|z|^2)]=\infty.\end{equation}   

Suppose now a closed, nonpolar set $K\subset \C$ is given, and $f:K\to \C$ is continuous. For $K$ compact, the class of admissible weights $Q$ on $K$ suffices for our purposes; for unbounded $K$, we make the following definition. 

  \begin{definition}\label{admit} We call a lower semicontinuous function $Q$ on a closed, unbounded set $K\subset \C$ with $\{z\in K: Q(z) <\infty\}$ not polar {\it $f-$admissible} for $K$ if
  $$\psi(z):= Q(x)-\frac{1}{2}\log {[(1+|z|^2)(1+|f(z)|^2)]}$$ 
satisfies $\lim_{|z|\to \infty, \ z\in K}\psi(z)=\infty$.
\end{definition}

Note that this implies $\psi(z)\geq c =c(Q)>-\infty$ for all $z\in K$; also, since $1+|f(z)|^2\geq 1$, we have $\psi(z)\leq Q(z)-\frac{1}{2}\log (1+|z|^2)$ so that $Q$ is admissible in the usual potential-theoretic sense (\ref{defadm}) of \cite{ST}. The hypothesized growth of $Q$ depends heavily on $f$. We say $Q$ is {\it strongly $f-$admissible} for $K$ if there exists $\delta >0$ such that $(1-\delta)Q$ is $f-$admissible for $K$. 

The weighted potential theory problem we study is to minimize the weighted energy
\begin{equation}\label{def-EQ}
E^Q_f(\mu)=E^Q(\mu):= \int_{K}\int_{K}\log\frac{1}{|x-y||f(x)-f(y)|w(x)w(y)}d\mu(x)d\mu(y)
\end{equation}
over $\mu \in \mathcal M(K)$, the set of probability measures on $K$. Here $w=e^{-Q}$.
Note that the double integral in (\ref{def-EQ}) is well-defined and different from $-\infty$. Indeed, let
\begin{equation}\label{def-k}
k(x,y):=-\log \left(|x-y||f(x)-f(y)|w(x)w(y)\right).
\end{equation}
Using the inequality
$|u-v|\leq \sqrt {1+|u|^2} \sqrt {1+|v|^2}$,
we have
\begin{multline*}
\log {|x-y|}+ \log {|f(x)-f(y)|} \\ \leq
\frac{1}{2}\log {(1+|x|^2)} +\frac{1}{2}\log {(1+|y|^2)} +\frac{1}{2}\log {(1+|f(x)|^2)} 
+\frac{1}{2}\log {(1+|f(y)|^2)}.
\end{multline*}
Hence, by Definition \ref{admit}, 
\begin{equation}\label{ineg-k-psi}
k(x,y)\geq \psi(x) +\psi(y)\geq 2c \ \hbox{on} \ K\times K,
\end{equation}
and the integrand of the double integral is bounded below by $2c$. 

We also recall the definition of the logarithmic energy of $\mu$,
$$I(\mu):=\int_K \int_K \log \frac{1}{|x-y|}d\mu(x) d\mu(y)=:\int_K p_{\mu}(y)d\mu(y)$$
where $p_{\mu}(y):=  \int_K \log \frac{1}{|x-y|}d\mu(x)$ is the logarithmic potential of $\mu$. For $K\subset \C$ compact, the logarithmic capacity of $K$ is 
\begin{equation}\label{capdef} cap(K):=\exp \bigl[-\inf\{I(\mu): \mu\in \mathcal M(K)\}\bigr].\end{equation}  
For a Borel set $E\subset \C$, $cap(E)$ may be defined as $\exp \bigl[-\inf I(\mu)]$ where the infimum is taken over all Borel probability measures with compact support in $E$. The weighted logarithmic energy of $\mu$ with respect to $Q$ is 
\begin{equation}\label{def-IQ}
I^Q(\mu):=\int_{K}\int_{K}\log\frac{1}{|x-y|w(x)w(y)}d\mu(x)d\mu(y).
\end{equation}
Since $1+|f(x)|^{2}\geq 1$, the double integral in (\ref{def-IQ}) is also well-defined and different from $-\infty$. When $I(\mu)\neq-\infty$ or $\int Qd\mu<\infty$, we can rewrite $I^{Q}(\mu)$ as
$$I^{Q}(\mu)=I(\mu)+2\int_K Qd\mu.$$
For the push-forward measure $f_*\mu$ of $\mu$ on $f(K)$, we have
\begin{align*}
I(f_*\mu) & =\int_K \int_K \log \frac{1}{|f(x)-f(y)|}d\mu(x) d\mu(y)
=\int_{f(K)}\int_{f(K)}\log \frac{1}{|a-b|}df_*\mu(a) df_*\mu(b)
\\
& =\int_{f(K)}p_{f_*\mu}(b) df_*\mu(b)=\int_K p_{f_*\mu}(f(z))d\mu(z).
\end{align*}
When $I^{Q}(\mu)\neq+\infty$ or $I(f_{*}\mu)\neq-\infty$, the energy $E^{Q}(\mu)$ can be rewritten as
$$E^{Q}(\mu)=I^Q(\mu) + I(f_*\mu).$$
  \begin{proposition}\label{exun}
Let $K\subset \C$ be closed and let $Q$ be $f-$admissible for $K$. Suppose there exists $\nu \in \mathcal M(K)$ with $E^Q(\nu)<\infty$. Let
$V_{w}:=\inf\{E^{Q}(\mu),~\mu\in\MM(K)\}$.
Then 
\begin{enumerate}
\item $V_{w}$ is finite.
\item Setting $K_{M}:=\{z: Q(z)\leq M\}$, we have, for sufficiently large $M<\infty$,
$$V_{w}=\inf\{E^{Q}(\mu),~\mu\in\MM(K_{M})\}.$$
\item We have existence and uniqueness of $\mu_{K,Q}$ minimizing $E^Q$. The measure $\mu_{K,Q}$ has compact support and the logarithmic energies $I(\mu_{K,Q})$ and $I(f_{*}\mu_{K,Q})$ are finite. 
\item
The following Frostman-type inequalities hold true:
\begin{align}\label{Frost1}
p_{\mu_{K,Q}}(z)  +p_{f_*\mu_{K,Q}}(f(z))+Q(z) & \geq F_{w} \ \hbox{q.e. on}  \ K,\\
p_{\mu_{K,Q}}(z)  +p_{f_*\mu_{K,Q}}(f(z))+Q(z) & \leq F_{w} \ \hbox{on supp}(\mu_{K,Q}),\label{Frost2}
\end{align}
where $F_{w}:=I(\mu_{K,Q})+I(f_{*}\mu_{K,Q})+\int Qd\mu_{K,Q}
=V_{w}-\int Qd\mu_{K,Q}$.
\item if a measure $\mu\in \mathcal M(K)$ with compact support and $E^{Q}(\mu)<\infty$ satisfies 
\begin{align}\label{frost1}
p_{\mu}(z)  +p_{f_*\mu}(f(z))+Q(z) & \geq C \ \hbox{q.e. on}  \ K,\\
p_{\mu}(z)  +p_{f_*\mu}(f(z))+Q(z) & \leq C \ \hbox{on supp}(\mu),\label{frost2}
\end{align}
for some constant $C$, then $\mu =\mu_{K,Q}$.
\end{enumerate}
      \end{proposition}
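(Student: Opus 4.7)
The plan is to adapt the classical Frostman--Saff--Totik machinery (cf.\ \cite{ST}) to the augmented kernel $k$ of (\ref{def-k}), systematically exploiting the decomposition $E^Q(\mu) = I^Q(\mu) + I(f_*\mu)$ when both pieces are finite. Part (1) is immediate from (\ref{ineg-k-psi}) together with the hypothesized existence of some $\nu$ with $E^Q(\nu)<\infty$. For (2), I would use that $\psi(z)\to+\infty$ as $|z|\to\infty$ along $K$ (Definition \ref{admit}): a probability measure with appreciable mass on $\{z\in K: Q(z)>M\}$ forces $\int\psi\,d\mu$ to be large, and hence $E^Q(\mu)\geq 2\int\psi\,d\mu$ also large, so truncating minimizing sequences onto $K_M$ is free in the limit.

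For (3), I extract a weakly convergent subsequence $\mu_n\to\mu_{K,Q}$ of a minimizing sequence in $\MM(K_M)$ and invoke lower semicontinuity of $E^Q$ (principle of descent applied to both logarithmic kernels after bounding $k$ from below by continuous truncations, plus lower semicontinuity of $Q$). Compactness of support and finiteness of $I(\mu_{K,Q})$ and $I(f_*\mu_{K,Q})$ then follow because $Q$ is bounded on $K_M$, allowing the split $E^Q(\mu_{K,Q}) = I(\mu_{K,Q}) + I(f_*\mu_{K,Q}) + 2\int Q\,d\mu_{K,Q}$. Uniqueness reduces to strict convexity: for two minimizers $\mu_0,\mu_1$ with $\sigma=\mu_1-\mu_0$, the quadratic identity
$$E^Q(\mu_0)+E^Q(\mu_1)-2E^Q\!\left(\frac{\mu_0+\mu_1}{2}\right)=\frac{1}{2}\bigl(I(\sigma)+I(f_*\sigma)\bigr)$$
has left-hand side $\leq 0$ by minimality. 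Classical non-negativity of $I$ on compactly supported signed measures of zero total mass and finite energy applies to both terms on the right (with individual finiteness a consequence of the existence step), and strict positivity $I(\sigma)>0$ unless $\sigma=0$ then forces $\mu_0=\mu_1$.

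For (4) I would run the standard variational argument: for any $\sigma\in\MM(K_M)$ with $E^Q(\sigma)<\infty$, non-negativity of $\frac{d}{dt}\big|_{t=0^+}E^Q\bigl((1-t)\mu_{K,Q}+t\sigma\bigr)$ yields $\int U\,d\sigma\geq F_w$, where $U(z):=p_{\mu_{K,Q}}(z)+p_{f_*\mu_{K,Q}}(f(z))+Q(z)$; localizing $\sigma$ near arbitrary points gives (\ref{Frost1}) q.e.\ on $K$, and (\ref{Frost2}) on $\text{supp}(\mu_{K,Q})$ then follows from the identity $\int U\,d\mu_{K,Q}=F_w$. For (5), given $\mu$ satisfying (\ref{frost1})--(\ref{frost2}) at some constant $C$, set $\sigma:=\mu-\mu_{K,Q}$ and $E^Q(\mu,\mu_{K,Q}):=\iint k\,d\mu\,d\mu_{K,Q}$. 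The quadratic identity
$$E^Q(\mu)+V_w-2\,E^Q(\mu,\mu_{K,Q})=I(\sigma)+I(f_*\sigma)\geq 0,$$
combined with $E^Q(\mu,\mu_{K,Q})\geq E^Q(\mu)$ (obtained by integrating (\ref{frost1}) for $\mu$ against $\mu_{K,Q}$, which has no mass on polar sets since $I(\mu_{K,Q})<\infty$, and comparing with (\ref{frost2})), yields $V_w\geq E^Q(\mu)$; coupled with $V_w\leq E^Q(\mu)$ by minimality, this makes $\mu$ a minimizer, and the uniqueness from (3) gives $\mu=\mu_{K,Q}$. The principal obstacle is the strict-convexity step in (3): individual non-negativity of $I(\sigma)$ and $I(f_*\sigma)$ is needed, whereas only $E^Q$ is a priori finite; fortunately, the finiteness of each piece of the energy, established in the existence step, circumvents this.
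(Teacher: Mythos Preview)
Your proposal is correct and matches the paper's approach in every part: the kernel lower bound (\ref{ineg-k-psi}) for (1)--(2), weak compactness on $K_M$ plus the parallelogram identity for (3), the first-variation inequality for (4), and the quadratic expansion for (5) (your cross-energy inequality $E^Q(\mu,\mu_{K,Q})\geq E^Q(\mu)$ is exactly the paper's $R\geq 0$, just repackaged). One small point: deducing (\ref{Frost2}) from the identity $\int U\,d\mu_{K,Q}=F_w$ alone gives only $U\leq F_w$ $\mu_{K,Q}$-a.e., and you still need lower semicontinuity of $U$ (which the paper invokes explicitly via a contradiction argument) to upgrade this to every point of $\mathrm{supp}(\mu_{K,Q})$.
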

      
  \begin{proof} For 1., we have $V_{w}<\infty$ by assumption. The other inequality 
$-\infty<V_{w}$ follows from the fact that the double integral in (\ref{def-EQ}) is bounded below by $2c$. The proof of 2. follows the lines of \cite[p. 29-30]{ST}, namely one first proves that, for $M$ sufficiently large,
$$k(x,y)>V_{w}+1\quad\text{ if }\quad(x,y)\notin K_{M}\times K_{M},$$
from which one derives that $E^{Q}(\mu)=V_{w}$ is possible only for measures with support in $K_{M}$. 

We next prove 3. From 2., there is a sequence
$\{\mu_n\}\subset \mathcal M(K_{M})$ with 
$$E^Q(\mu_n)\to V_{w}\quad\text{as}\quad n\to\infty.$$
The set $K_{M}$ is compact, hence, by Helly's theorem, we get a subsequence of these measures converging weakly to a probability measure $\mu$ supported on 
$K_{M}$; and it is easy to see this $\mu:=\mu_{K,Q}$ satisfies $E^Q(\mu)=V_{w}$. 
For the logarithmic energy of $\mu_{K,Q}$, we have $I(\mu_{K,Q})>-\infty$ because $\mu_{K,Q}$ has compact support. Since $f$ is continuous and $f_{*}\mu_{K,Q}$ has its support in $f(K_{M})$, we also have $I(f_{*}\mu_{K,Q})>-\infty$. Now, recalling that $Q$ is bounded below, we may write $I(\mu_{K,Q})$ as the well-defined expression
$$I(\mu_{K,Q})=V_{w}-I(f_{*}\mu_{K,Q})-2\int_{K}Qd\mu_{K,Q},$$
from which follows that $I(\mu_{K,Q})<\infty$ and then also 
$I(f_{*}\mu_{K,Q})<\infty$.

The uniqueness follows from the fact that $\mu \to  I(\mu)$ is strictly convex and $\mu \to I(f_*\mu)$ is convex on the subsets of $\mathcal M(K)$ where they are finite. To be precise, it is well-known that for $\mu_{1}$ and $\mu_{2}$ two measures with finite energies and $\mu_1(K)=\mu_2(K)$, we have $I(\mu_{1}-\mu_{2})\geq 0$ and $I(\mu_{1}-\mu_{2})=0$ if and only if $\mu_{1}=\mu_{2}$ (cf., Lemma I.1.8 in \cite{ST}).

Now if $\bar\mu\in \mathcal M(K)$ is another measure which minimizes $E^Q$, we know from the proof of 2. that $\bar\mu\in \mathcal M(K_{M})$. Consequently, 
$I(\bar\mu), I(f_{*}\bar\mu)>-\infty$ and then also $I(\bar\mu), I(f_{*}\bar\mu)<\infty$. We have
$$E^Q(\frac{1}{2}(\mu_{K,Q}+\bar\mu))+I(\frac{1}{2}(\mu_{K,Q}-\bar\mu))
+I(f_*(\frac{1}{2}(\mu_{K,Q}-\bar\mu))=\frac{1}{2}[E^Q(\mu_{K,Q})+E^Q(\bar\mu)]
=V_{w}.$$
The sum $I(\frac{1}{2}(\mu_{K,Q}-\bar\mu))
+I(f_*(\frac{1}{2}(\mu_{K,Q}-\bar\mu))\geq 0$ with equality if and only if $\mu_{K,Q}=\bar\mu$; hence the result. 

We next prove the first inequality in 4. Let $\mu\in\MM(K)$ with compact support and consider the measure $\tilde\mu=t\mu+(1-t)\mu_{K,Q}$, $t\in[0,1]$. The inequality $E^{Q}(\mu_{K,Q})\leq E^{Q}(\tilde\mu)$ can be rewritten as  
\begin{multline*}
E^{Q}(\mu_{K,Q})\leq t^{2}(I(\mu)+I(f_{*}\mu))
+(1-t)^{2}(I(\mu_{K,Q})+I(f_{*}\mu_{K,Q}))\\
+2t(1-t)(I(\mu,\mu_{K,Q})+I(f_{*}\mu,f_{*}\mu_{K,Q}))
+2\int Qd(t\mu+(1-t)\mu_{K,Q}),
\end{multline*}
where, for two measures $\mu$ and $\nu$, we denote by $I(\mu,\nu)$ the mutual logarithmic energy
$$I(\mu,\nu)=-\iint\log|x-y|d\mu(x)d\nu(y).$$
Note that the right-hand side of the above inequality is well-defined since the assumption that $\mu$ has compact support implies that all terms in the sum are larger than $-\infty$. Letting $t$ tend to 0, we obtain
\begin{equation}\label{Fw-less}
F_{w}=I(\mu_{K,Q})+I(f_{*}\mu_{K,Q})+\int Qd\mu_{K,Q}\leq 
I(\mu,\mu_{K,Q})+I(f_{*}\mu,f_{*}\mu_{K,Q})
+\int Qd\mu.
\end{equation}
Now, we proceed by contradiction, assuming that there exists a nonpolar compact subset $\KK$ of $K$ such that
$$\forall z\in\KK,\quad p_{\mu_{K,Q}}(z)  
+p_{f_*\mu_{K,Q}}(f(z))+Q(z) < F_{w}.$$ 
Integrating this inequality with respect to a probability measure $\mu$ supported on $\KK$, we obtain
$$I(\mu,\mu_{K,Q})+I(f_{*}\mu,f_{*}\mu_{K,Q})
+\int Qd\mu<F_{w},$$
which contradicts (\ref{Fw-less}). 

The proof of the second inequality in 4. is also by contradiction. Assume that
$$\exists x_{0}\in\supp(\mu_{K,Q}),\quad
p_{\mu_{K,Q}}(x_{0})  +p_{f_*\mu_{K,Q}}(f(x_{0}))+Q(x_{0}) > F_{w}.$$
By lower semicontinuity, the inequality is satisfied in a neighborhood $V_{x_{0}}$ of $x_{0}$. Moreover $\mu_{K,Q}(V_{x_{0}})>0$ since $x_{0}\in\supp(\mu_{K,Q})$. Using the first inequality (\ref{Frost1}) on $\supp(\mu_{K,Q})\setminus V_{x_{0}}$ and the fact that $\mu_{K,Q}(E)=0$ for $E$ a polar set (since $\mu_{K,Q} $ has finite logarithmic energy $I(\mu_{K,Q})$), we obtain
\begin{align*}
F_{w} & =\int (p_{\mu_{K,Q}}(z)  
+p_{f_*\mu_{K,Q}}(f(z))+Q(z))d\mu_{K,Q}(z) \\
& >F_{w}\mu_{K,Q}(V_{x_{0}})+
F_{w}\mu_{K,Q}(\supp(\mu_{K,Q})\setminus V_{x_{0}})=F_{w},
\end{align*}
which is a contradiction.

Finally, we prove 5. We write 
$$\mu_{K,Q}=\mu+(\mu_{K,Q}-\mu).$$ 
Then
$$E^Q(\mu)\geq E^Q(\mu_{K,Q}) = E^Q(\mu) +I(\mu_{K,Q}-\mu)+ I(f_*(\mu_{K,Q}-\mu))+2R$$
with
\begin{align*}
R := & \int_K\bigl[\int_K -\log {|x-y|}d\mu(y) +Q(x)\bigr]d(\mu_{K,Q}-\mu)(x) 
\\  & 
- \int_K\int_K \log {|f(x)-f(y)|}d\mu(y)d(\mu_{K,Q}-\mu)(x)
\\
 = & \int_K(p_{\mu}(x)+Q(x))d(\mu_{K,Q}-\mu)(x)+\int_Kp_{f_*\mu}(f(x))d(\mu_{K,Q}-\mu)(x)
\\
 = & \int_K(p_\mu(x)  +p_{f_*\mu}(f(x))+Q(x))d(\mu_{K,Q}-\mu)(x).
\end{align*}
Note that the above computation is justified. Indeed, from the assumptions $E^{Q}(\mu)<\infty$ and $\mu$ has compact support, the quantities $E^{Q}(\mu)$, $I^{Q}(\mu)$, $I(f_{*}\mu)$, $I(\mu)$, $\int Qd\mu$, and $I(\mu,\mu_{K,Q})$
are all finite. Making use of
the inequalities (\ref{frost1}) and (\ref{frost2}), we derive
$$R\geq  C\int_K d\mu_{K,Q}-C\int_K d\mu=0.$$
Now, recall that $I(\mu_{K,Q}-\mu)+ I(f_*(\mu_{K,Q}-\mu))\geq 0$ with equality if and only if $\mu_{K,Q}=\mu$. Thus 
$$E^Q(\mu) \geq E^Q(\mu_{K,Q}) 
\geq E^Q(\mu)$$
so that equality holds throughout, and $E^Q(\mu) = E^Q(\mu_{K,Q})$, from which
follows $\mu =\mu_{K,Q}$. 
\end{proof}

The condition that there exist $\nu \in \mathcal M(K)$ with $E^Q(\nu)<\infty$ is not automatic. For example, if $f$ is a constant function, then trivially all measures $\nu$ have $I(f_*\nu)=\infty$. We give a sufficient condition on $f$ ensuring the hypothesis of Proposition \ref{exun}.

\begin{proposition}\label{cond-f} 
  If $f:K\to \C$ is continuous and 
  $$\Sigma:= \left\{z\in K:Q(z)<\infty \ \hbox{and} \ 
  \liminf_{\substack{(z_1,z_2)\to (z,z)\\z_1,z_2\in K,~z_{1}\neq z_{2}}}
  \left|\frac{f(z_1)-f(z_2)}{z_1-z_2}\right|>0\right\}$$
  is not polar, then there exist $\nu \in \mathcal M(K)$ with $E^Q(\nu)<\infty$.
  
  \end{proposition}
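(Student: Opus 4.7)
The plan is to construct $\nu$ supported on a compact, non-polar subset of $\Sigma$ on which $Q$ is bounded and $f$ is bilipschitz from below, and then verify $E^{Q}(\nu)<\infty$ via the decomposition $E^{Q}(\nu)=I(\nu)+2\int Q\,d\nu+I(f_{*}\nu)$ recorded earlier in this section.

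First I would slice $\Sigma$ countably: for integers $n,m\geq 1$, set
$$\Sigma_{n,m}:=\Bigl\{z\in K:\ Q(z)\leq n,\ |f(z_{1})-f(z_{2})|\geq \tfrac{1}{n}|z_{1}-z_{2}|\ \hbox{for all}\ z_{1},z_{2}\in K\cap B(z,1/m)\Bigr\}.$$
The $\liminf$ condition defining $\Sigma$ is exactly the statement that $z$ lies in some $\Sigma_{n,m}$, so $\Sigma=\bigcup_{n,m}\Sigma_{n,m}$. Lower semicontinuity of $Q$ and continuity of $f$ (together with the fact that the inequalities $|z_{i}-z|<1/m$ are strict, hence persist under perturbation of $z$) show that each $\Sigma_{n,m}$ is closed in $K$, hence in $\C$. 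Since a countable union of polar sets is polar and $\Sigma$ is not, some $\Sigma_{n,m}$ fails to be polar.

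Next I would extract a small-diameter compact piece. Covering $\Sigma_{n,m}$ by countably many closed balls of diameter smaller than $1/m$, some intersection $K_{0}$ with $\Sigma_{n,m}$ remains non-polar; $K_{0}$ is closed and bounded, hence compact. Because $\mathrm{diam}(K_{0})<1/m$, for any reference point $z\in K_{0}$ every pair $z_{1},z_{2}\in K_{0}$ lies in $K\cap B(z,1/m)$, so the defining bound of $\Sigma_{n,m}$ upgrades to the uniform bilipschitz estimate $|f(z_{1})-f(z_{2})|\geq (1/n)|z_{1}-z_{2}|$ on all of $K_{0}$.

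Finally I take $\nu$ to be the equilibrium measure of $K_{0}$, viewed as an element of $\mathcal{M}(K)$. Non-polarity gives $I(\nu)=-\log\capa(K_{0})<\infty$; the bound $Q\leq n$ on $K_{0}$ gives $\int Q\,d\nu<\infty$; and the bilipschitz estimate yields
$$I(f_{*}\nu)=\iint\log\frac{1}{|f(x)-f(y)|}\,d\nu(x)\,d\nu(y)\leq \log n + I(\nu)<\infty.$$
Summing these three finite quantities gives $E^{Q}(\nu)<\infty$, while the lower bound $E^{Q}(\nu)\geq 2c$ from (\ref{ineg-k-psi}) is automatic. I expect the only real obstacle to be the careful bookkeeping in verifying that $\Sigma_{n,m}$ is closed and then carving out a compact non-polar subset of diameter less than $1/m$; both are routine given lower semicontinuity of $Q$, continuity of $f$, and the elementary fact that a countable union of polar sets is polar.
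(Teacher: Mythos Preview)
Your proof is correct and follows essentially the same strategy as the paper: slice $\Sigma$ according to the level of $Q$ and a quantitative lower bound on the difference quotient, extract a non-polar compact piece on which the bilipschitz estimate holds uniformly, and take a finite-energy measure there to bound $I(\nu)$, $\int Q\,d\nu$, and $I(f_*\nu)$ separately. Your two-index decomposition $\Sigma_{n,m}$ (with $m$ encoding the neighborhood radius) and the subsequent small-ball cover is a minor variant of the paper's single-index $\Sigma_n$ combined with the lower semicontinuity of the extended difference quotient $\phi$ and the locality of polarity, but the underlying argument is the same.
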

  
  \begin{proof} Let $D:=\{(z,z):z\in K\}$. Define 
  $$\phi(z_1,z_2):=\left|\frac{f(z_1)-f(z_2)}{z_1-z_2}\right|;$$
  this is continuous on $(K\times K) \setminus D$. Extend $\phi$ to $D$ by defining
  $$\phi(z,z):= \liminf_{\substack{(z_1,z_2)\to (z,z)\\ z_1,z_2\in K,~z_{1}\neq z_{2}}}
  \left|\frac{f(z_1)-f(z_2)}{z_1-z_2}\right|.$$
  Then $\phi:K\times K \to \C$ is lower semicontinuous and we can write 
  $\Sigma= \cup_{n=1}^{\infty} \Sigma_n$
  where 
  $$\Sigma_n:= \{z\in K: Q(z) < n \ \hbox{and} \ \phi(z,z)>1/n\}.$$
  This is an increasing union so for all sufficiently large $n$, $\Sigma_n$ is not polar. Fix such an $n$.
Since polarity is a local property, see e.g. \cite[Remark 4.2.13]{HEL}, there exists $z\in\Sigma_{n}$ such that, for any neighborhood $V_{z}$ of $z$, $\Sigma_{n}\cap V_{z}$ is not polar. 

Now, the function $\phi$ is lower semicontinuous on $K^{2}$, hence there exists a neigborhood $V_{z}$ of $z$ such that 
$\phi(z_{1},z_{2})>1/n$ on $(\Sigma_{n}\cap V_{z})^{2}$ and by the preceeding remark, $\Sigma_{n}\cap V_{z}$ is not polar. Being not polar, $\Sigma_{n}\cap V_{z}$ supports a measure $\nu$ of finite logarithmic energy which is also of finite weighted logarithmic energy since $Q(z)<n$ for $z\in\Sigma_{n}$. It remains to prove that 
$f_{*}\nu$ is also of finite logarithmic energy. This follows from
\begin{align*}
I(f_*\nu) & =\int_{ \Sigma_n\cap V_{z}}\int_{ \Sigma_n\cap V_{z}} \log \frac{1}{|f(z_1)-f(z_2)|}d\nu(z_1)d\nu(z_2)\\
& \leq \log  n + \int_{\Sigma_n\cap V_{z}}\int_{ \Sigma_n\cap V_{z}}\log \frac{1}{|z_1-z_2|}d\nu(z_1)d\nu(z_2)<\infty.
\end{align*}
\end{proof}
 
We will use two specific situations later in the paper: $f$ is the restriction to $K$ of an entire function; and $f$ is the restriction to $K\subset (0,\infty)$ of $f$ holomorphic in the right half plane $H:=\{z\in \C: \Re z >0\}$ with $f(x)>0$ for $x>0$.  These cases are covered in the following two corollaries.

\begin{corollary}  Assume $f$ is holomorphic on a neighborhood of $K$ and the subset $\{z\in{K}: ~f'(z)\neq 0\text{ and }Q(z)<\infty\}$ is nonpolar.
Then there exist $\nu \in \mathcal M(K)$ with $E^Q(\nu)<\infty$.
  
\end{corollary}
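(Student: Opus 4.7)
The plan is to reduce this corollary directly to Proposition \ref{cond-f} by showing that the set $\Sigma$ defined there is nonpolar. Concretely, I would prove the inclusion
$$\{z\in K:\, f'(z)\neq 0 \text{ and } Q(z)<\infty\}\;\subset\;\Sigma,$$
so nonpolarity of the left-hand set, which is our hypothesis, forces nonpolarity of $\Sigma$.

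To establish the inclusion, fix $z\in K$ with $Q(z)<\infty$ and $f'(z)\neq 0$. Since $f$ is holomorphic on an open neighborhood $U$ of $K$, choose a convex open neighborhood $V\subset U$ of $z$. For any $z_1,z_2\in K\cap V$ with $z_1\neq z_2$, the segment $[z_1,z_2]$ lies in $V$, and one may write
$$\frac{f(z_1)-f(z_2)}{z_1-z_2}=\int_0^1 f'\bigl(z_2+t(z_1-z_2)\bigr)\,dt.$$
By continuity of $f'$ on $V$, the right-hand side tends to $f'(z)$ as $(z_1,z_2)\to(z,z)$ in $K\times K$ off the diagonal. Hence
$$\liminf_{\substack{(z_1,z_2)\to(z,z)\\ z_1,z_2\in K,\, z_1\neq z_2}}\left|\frac{f(z_1)-f(z_2)}{z_1-z_2}\right|=|f'(z)|>0,$$
so $z\in\Sigma$, as claimed.

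With this inclusion in hand, $\Sigma$ inherits nonpolarity from the hypothesis and Proposition \ref{cond-f} yields a $\nu\in\MM(K)$ with $E^Q(\nu)<\infty$. There is essentially no obstacle here; the only point requiring attention is that the integral representation of the divided difference is only available once $[z_1,z_2]\subset U$, which is why one first passes to a convex neighborhood $V$ of $z$ inside $U$ and then restricts the $\liminf$ to $K\cap V$ (shrinking the neighborhood in the definition of $\liminf$ can only increase it, so this suffices).
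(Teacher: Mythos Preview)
Your proof is correct and is exactly the intended argument: the paper states this corollary without proof as an immediate consequence of Proposition~\ref{cond-f}, and the only thing to check is that holomorphy forces the divided-difference $\liminf$ to equal $|f'(z)|$, which you establish cleanly via the integral representation on a convex neighborhood.
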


\begin{corollary} Let $f:[0,\infty)\to\R$ be a continuous function which is differentiable for $x>0$ and let $K\subset [0,\infty)$. Assume the subset $\{z\in{K}: ~f'(z)\neq 0\text{ and }Q(z)<\infty\}$ is nonpolar.
Then there exist $\nu \in \mathcal M(K)$ with $E^Q(\nu)<\infty$.

\end{corollary}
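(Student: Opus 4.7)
The plan is to reduce to Proposition~\ref{cond-f} by showing that the set
$$\Sigma = \Bigl\{z \in K : Q(z) < \infty \text{ and } \liminf_{\substack{(z_1,z_2)\to(z,z) \\ z_1,z_2\in K,\, z_1\neq z_2}} \frac{|f(z_1)-f(z_2)|}{|z_1-z_2|} > 0\Bigr\}$$
is nonpolar. The main tool for recognizing membership in $\Sigma$ is the mean value theorem: for $z_1\neq z_2$ in $K\cap(0,\infty)$ there exists $\xi$ strictly between them with $|f(z_1)-f(z_2)|/|z_1-z_2| = |f'(\xi)|$. Hence whenever an open $V\subset(0,\infty)$ carries the uniform bound $|f'|\geq c>0$, every $z\in K\cap V$ with $Q(z)<\infty$ belongs to $\Sigma$.

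The hypothesis set $\{z\in K : f'(z)\neq 0,\,Q(z)<\infty\}$ lies in $(0,\infty)$ since $f'$ is defined only for $x>0$, and it decomposes as the countable increasing union of
$$T_n := \bigl\{z\in K : |f'(z)|\geq 1/n,\ Q(z)\leq n,\ z\geq 1/n\bigr\};$$
a countable union of polar sets being polar, some $T_{n_0}$ is nonpolar. Using the local property of polarity (exactly as in the proof of Proposition~\ref{cond-f}), I pick $z_0\in T_{n_0}$ such that $T_{n_0}\cap V$ is nonpolar in every neighborhood $V$ of $z_0$; necessarily $z_0>0$. If I can exhibit an open neighborhood $V_{z_0}\subset(0,\infty)$ of $z_0$ on which $|f'|$ remains bounded below by a positive constant, then $T_{n_0}\cap V_{z_0}\subset\Sigma$, hence $\Sigma$ is nonpolar, and Proposition~\ref{cond-f} produces the required $\nu\in\mathcal{M}(K)$ with $E^Q(\nu)<\infty$.

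The principal obstacle is producing such $V_{z_0}$. In the continuously differentiable case this is immediate, since continuity of $f'$ at $z_0$ together with $|f'(z_0)|\geq 1/n_0$ yields $|f'|\geq 1/(2n_0)$ on a small neighborhood. Under only pointwise differentiability, $f'$ is Baire class one with the Darboux intermediate value property, and the plan is to exploit the assumed local nonpolarity of $T_{n_0}$ at $z_0$ to rule out the pathological possibility that $|f'|$ drops arbitrarily close to zero in every neighborhood of $z_0$: zeros of $f'$ accumulating at $z_0$ (as Darboux would force in that situation), combined with the $G_\delta$ structure of $\{|f'|\geq 1/n_0\}$, should confine $T_{n_0}$ to a set near $z_0$ too thin to be nonpolar in every neighborhood. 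This delicate verification---essentially a local regularity statement about $f'$ at $z_0$---is the main technical point; once it is established, the reduction above completes the argument.
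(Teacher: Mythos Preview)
The paper states this corollary without proof, intending it as an immediate consequence of Proposition~\ref{cond-f}. The implicit argument is the one-line observation that if $f'$ is \emph{continuous} and $f'(z)\neq 0$, then $|f'|\geq c>0$ on a neighborhood $V$ of $z$, whence by the mean value theorem the difference quotient is bounded below by $c$ for pairs in $K\cap V$, so $z\in\Sigma$; thus $\{z\in K:f'(z)\neq 0,\ Q(z)<\infty\}\subset\Sigma$ and Proposition~\ref{cond-f} applies. Since the paper's actual applications all have $f$ holomorphic (hence $C^\infty$), this is entirely adequate there, and your decomposition into the sets $T_n$ together with the local-nonpolarity argument is unnecessary machinery for that reading.

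You correctly identify the reduction to Proposition~\ref{cond-f} via the mean value theorem, and you correctly flag that under \emph{pointwise} differentiability alone, $f'(z_0)\neq 0$ does not force $|f'|$ to be bounded below on any neighborhood of $z_0$. But your proposal does not close this gap. You explicitly leave the ``delicate verification'' undone, and the target you set yourself---an open $V_{z_0}\subset(0,\infty)$ on which $|f'|\geq c$---is, for $K$ an interval containing $V_{z_0}$, precisely the condition that every point of $V_{z_0}$ lies in $\Sigma$; so nothing has been gained beyond restating the goal. Nor does local nonpolarity of $T_{n_0}$ at $z_0$ combined with Darboux/Baire-1 considerations obviously yield such a neighborhood: derivatives can be badly discontinuous, and there is no general principle preventing $\{|f'|\geq 1/n_0\}$ from being locally nonpolar while $|f'|$ still dips below every positive level in every neighborhood (Pompeiu-type constructions are the natural place to look for counterexamples). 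Either read ``differentiable'' as $C^1$---which is what the paper needs and almost certainly intends---in which case the inclusion $\{f'\neq 0,\ Q<\infty\}\subset\Sigma$ is immediate and the proof is one line; or supply a genuine argument in the merely differentiable case, bearing in mind that the corollary as literally stated may well fail in that generality.
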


We state an approximation property that one can use to prove a large deviation result in the unbounded setting. In the next section, we will prove a version for compact sets (Lemma \ref{lemma-scal}) which we will need for our large deviation principle in this case.   
\begin{lemma}\label{approx1}
Let $K$ be a closed and nonpolar subset of $\C$ and let $Q$ be $f-$admissible on $K$. Given $\mu\in\MM(K)$, there exist an increasing sequence of compact sets $K_m$ in $K$ and a sequence of measures $\mu_m\in \MM(K_m)$ such that 
\begin{enumerate}
\item the measures $\mu_m$ tend weakly to $\mu$ as $m\to\infty$; 
\item the energies $E^{Q_{m}}(\mu_m)$ tend to $E^{Q}(\mu)$ as $m\to\infty$, where
$Q_{m}:=Q|_{K_{m}}$.
\end{enumerate}
\end{lemma}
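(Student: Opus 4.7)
The plan is to use the natural truncation. Fix an increasing sequence of radii $R_{m}\to\infty$ and set $K_{m}:=K\cap\overline{B(0,R_{m})}$; the $K_{m}$ are compact and increase to $K$. Since $\mu$ is a probability measure on $K$, $\mu(K_{m})\to 1$, so for all sufficiently large $m$ we have $\mu(K_{m})>0$, and we define $\mu_{m}:=\mu|_{K_{m}}/\mu(K_{m})\in\MM(K_{m})$. Weak convergence $\mu_{m}\to\mu$ is then immediate from dominated convergence: for any bounded continuous $\varphi$ on $K$,
$$\int_{K}\varphi\,d\mu_{m}=\frac{1}{\mu(K_{m})}\int_{K_{m}}\varphi\,d\mu\;\longrightarrow\;\int_{K}\varphi\,d\mu.$$

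For energy convergence, the idea is to exploit the uniform lower bound (\ref{ineg-k-psi}) furnished by $f$-admissibility: $k(x,y)\geq 2c$ on $K\times K$. Setting $\tilde k(x,y):=k(x,y)-2c\geq 0$, one has
$$E^{Q_{m}}(\mu_{m}) = \frac{1}{\mu(K_{m})^{2}}\left[\iint_{K\times K}\tilde k(x,y)\,\chi_{K_{m}\times K_{m}}(x,y)\,d\mu(x)\,d\mu(y)+2c\,\mu(K_{m})^{2}\right].$$
Since the sets $K_{m}\times K_{m}$ increase to $K\times K$ and $\tilde k\geq 0$, the monotone convergence theorem yields
$$\iint_{K\times K}\tilde k\,\chi_{K_{m}\times K_{m}}\,d\mu\,d\mu\;\nearrow\;\iint_{K\times K}\tilde k\,d\mu\,d\mu\;=\;E^{Q}(\mu)-2c,$$
where the right-hand side is interpreted as $+\infty$ when $E^{Q}(\mu)=+\infty$. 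Combining this with $\mu(K_{m})^{2}\to 1$ gives $E^{Q_{m}}(\mu_{m})\to E^{Q}(\mu)$ in both the finite and infinite energy regimes simultaneously.

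The essential observation is that the only role of $f$-admissibility in this argument is to shift the kernel $k$ to a pointwise nonnegative one, which is what allows monotone convergence to be applied without any integrability hypothesis on $\mu$. Accordingly there is no substantive obstacle here; the care required amounts to verifying that $\tilde k\geq 0$ on all of $K\times K$, which is precisely the content of (\ref{ineg-k-psi}), and then tracking the normalization factor $\mu(K_{m})^{-2}\to 1$.
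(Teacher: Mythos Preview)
Your proof is correct and follows essentially the same approach as the paper: truncate $\mu$ to an exhausting sequence of compact sets, apply monotone convergence to the kernel, then normalize. The only cosmetic difference is that the paper splits $k=k^{+}-k^{-}$ and applies monotone convergence to each piece, whereas you shift the kernel by $-2c$ to make it nonnegative and apply monotone convergence once; since (\ref{ineg-k-psi}) forces $k^{-}$ to be bounded, the two formulations are equivalent.
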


\begin{proof}
Since the measure $\mu$ has finite mass, there exist an increasing sequence of compact subsets $K_{m}$ of $K$ with $\mu(K\setminus K_{m})\leq 1/m$.
Then, the measures $\tilde\mu_{m}:=\mu|_{K_{m}}$ are increasing and tend weakly to $\mu$. Denoting as usual by $k^{+}(x,y)$ and $k^{-}(x,y)$ the positive and negative parts of the function $k(x,y)$ that was defined in (\ref{def-k}), we have, as $m\to\infty$,
$$\chi_m(x,y)k^{+}(x,y)\uparrow k^{+}(x,y) \quad\hbox{and} 
\quad \chi_m(x,y) k^{-}(x,y)\uparrow k^{-}(x,y),$$
$(\mu\times\mu)$-almost everywhere on $K\times K$ where $\chi_m(x,y)$ is the characteristic function of $K_m\times K_m$ and we agree that the left-hand sides vanish when $x=y\notin K_{m}$. By monotone convergence, we deduce that $E^{Q_{m}}(\tilde\mu_{m})$ tend to $E^{Q}(\mu)$ (possibly equal to $+\infty$) as $m\to\infty$, where we recall that the energy $E^{Q}(\mu)$, given by the double integral in (\ref{def-EQ}), is always well defined since $Q$ is $f$-admissible. Setting $\mu_{m}:=\tilde\mu_{m}/\mu(K_{m})$ gives the result.
\end{proof}

 \section{Discretization and additional results for $K$ compact} 
 
 In this section, we restrict to the case where $K$ is {\it compact}. Let $Q\in \AA(K)$ and $w:=e^{-Q}$. Note in this compact setting, the class $\AA(K)$ is universal; i.e., the same for all $f$. Here we naturally assume $f$ is such that there exists $\nu \in \mathcal M(K)$ with $E^Q(\nu)<\infty$ and we discretize the weighted energy problem (\ref{def-EQ}). Let 
  \begin{equation}\label{fvdm} |VDM_k^Q(z_0,...,z_k)|= \ \hbox{{\it weighted Vandermonde of order}} \ k\end{equation}
  $$:=|VDM(z_0,...,z_k)|\exp \Big(-k[Q(z_0)+\cdots + Q(z_k)]\Big)|VDM(f(z_0),...,f(z_k))|$$
  where
$VDM(z_0,...,z_k)=\prod_{0\leq  i<j\leq k}(z_j-z_i)$ and 
  $$\bigl(\delta_k^Q(f)\bigr)(K)=\delta_k^Q(K):=\max_{z_0,...,z_k\in K}  |VDM_k^Q(z_0,...,z_k)|^{2/k(k+1)}.$$
  We will use terminology such as {\it weighted Fekete points}, etc., for notions defined relative to weighted Vandermondes as defined in (\ref{fvdm}). The proofs of Propositions 3.1-3.3 of \cite{VELD} carry over in this setting. 
  
\begin{theorem} \label{sec3} Given $K\subset \C$ compact and not polar, and $Q\in \AA(K)$,
\begin{enumerate}
\item if $\{\mu_k=\frac{1}{k+1}\sum_{j=0}^k\delta_{z_j^{(k)}}\}\subset \mathcal M(K)$ converge weakly to $\mu\in \mathcal M(K)$, then 
\begin{equation}\label{upboundVDM}
\limsup_{k\to \infty} |VDM_k^Q(z_0^{(k)},...,z_k^{(k)})|^{2/k(k+1)}\leq \exp{(-E^Q(\mu))};
\end{equation}
\item we have
$$\delta^Q(K):=\lim_{k\to \infty} \delta_k^Q(K)=\exp{(-E^Q(\mu_{K,Q}))};$$
\item if $\{z_j^{(k)}\}_{j=0,...,k; \ k=2,3,...}\subset K$ and 
\begin{equation}\label{awfp}\lim_{k\to \infty} |VDM_k^Q(z_0^{(k)},...,z_k^{(k)})|^{2/k(k+1)}= \exp{(-E^Q(\mu_{K,Q}))}\end{equation}
then 
$$\mu_k=\frac{1}{k+1}\sum_{j=0}^k\delta_{z_j^{(k)}}\to \mu_{K,Q} \ \hbox{weakly}.$$

\end{enumerate}

\end{theorem}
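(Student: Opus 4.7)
The plan is to derive all three parts from a single integral identity and the lower semicontinuity of the kernel $k(x,y)$ from (\ref{def-k}). Expanding the definition of $VDM_k^Q$ and rewriting $-k\sum_j Q(z_j)$ as $-\sum_{i<j}[Q(z_i)+Q(z_j)]$ (each $Q(z_\ell)$ occurs in exactly $k$ ordered pairs) gives
\begin{equation*}
-\frac{2}{k(k+1)}\log\bigl|VDM_k^Q(z_0,\ldots,z_k)\bigr| \;=\; \frac{k+1}{k}\iint_{x\neq y} k(x,y)\,d\mu_k(x)\,d\mu_k(y),
\end{equation*}
where $\mu_k=\frac{1}{k+1}\sum_{j=0}^k \delta_{z_j^{(k)}}$. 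All three statements then reduce to an analysis of the right-hand side.

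To prove (1), I use that $k(x,y)$ is lower semicontinuous on $K\times K$ (since $f$ is continuous and $Q$ is l.s.c.) and, by (\ref{ineg-k-psi}), bounded below by $2c$. Truncating, $h_M(x,y):=\min(k(x,y),M)$ is bounded and lower semicontinuous. Since $\mu_k\to\mu$ weakly on the compact set $K$, the products $\mu_k\times\mu_k\to\mu\times\mu$ weakly, so by the portmanteau theorem
\begin{equation*}
\liminf_{k\to\infty}\iint_{K\times K} h_M\,d\mu_k\,d\mu_k \;\geq\; \iint_{K\times K} h_M\,d\mu\,d\mu.
\end{equation*}
The diagonal contribution satisfies $\iint_{\{x=y\}} h_M\,d\mu_k\,d\mu_k \leq M/(k+1)\to 0$ (one may assume the $z_j^{(k)}$ are distinct; otherwise $|VDM_k^Q|=0$ and (\ref{upboundVDM}) is trivial), and $h_M\leq k$ off the diagonal. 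Hence $\liminf_k\iint_{x\neq y} k\,d\mu_k\,d\mu_k \geq \iint h_M\,d\mu\,d\mu$. Letting $M\to\infty$, monotone convergence applied to $h_M-2c\uparrow k-2c$ upgrades the right-hand side to $E^Q(\mu)$; multiplying by $(k+1)/k\to 1$ yields (\ref{upboundVDM}).

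For the lower bound in (2), Fubini applied to $\mu_{K,Q}^{\otimes(k+1)}$, which is justified by the finite logarithmic energies from item 3 of Proposition \ref{exun}, gives
\begin{equation*}
\int_{K^{k+1}} \log|VDM_k^Q|\,d\mu_{K,Q}^{\otimes(k+1)} \;=\; -\frac{k(k+1)}{2}E^Q(\mu_{K,Q}) \;=\; -\frac{k(k+1)}{2}V_w,
\end{equation*}
so the pointwise bound $\log|VDM_k^Q|\leq \tfrac{k(k+1)}{2}\log\delta_k^Q(K)$ forces $\delta_k^Q(K)\geq e^{-V_w}$ for every $k$. For the matching upper bound I pick weighted Fekete tuples attaining $\delta_k^Q(K)$ (the max is attained since $|VDM_k^Q|$ is upper semicontinuous on $K^{k+1}$); by Helly some subsequence of their empirical measures converges weakly to some $\mu^{\star}\in\mathcal M(K)$, and (\ref{upboundVDM}) combined with $E^Q(\mu^{\star})\geq V_w$ gives $\limsup_k \delta_k^Q(K)\leq e^{-V_w}$. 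This proves (2). Assertion (3) is then immediate: any weak subsequential limit $\mu^{\star}$ of the empirical measures of an array satisfying (\ref{awfp}) satisfies $e^{-V_w}\leq e^{-E^Q(\mu^{\star})}$ by (\ref{upboundVDM}), so $E^Q(\mu^{\star})=V_w$ and by the uniqueness clause of Proposition \ref{exun}, $\mu^{\star}=\mu_{K,Q}$; weak compactness of $\mathcal M(K)$ then promotes subsequential convergence to convergence of the full sequence.

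The main obstacle is the lower semicontinuity step in part (1): because $k(x,y)=+\infty$ on the diagonal, one cannot apply the portmanteau theorem directly to $k$ itself, and the truncation trick relies on two ingredients that Section 2 has already secured, namely the uniform lower bound $k\geq 2c$ from the $f$-admissibility of $Q$ and the $O(1/k)$ bound on the diagonal mass of $\mu_k\times\mu_k$. Given those, the remaining ingredients—Fubini for the lower bound in (2), Helly for the upper bound, and uniqueness of the energy minimizer for (3)—are routine.
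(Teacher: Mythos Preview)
Your proof is correct and follows essentially the same route as the paper's (sketched) argument: truncation of the lower semicontinuous kernel for part (1), existence of weighted Fekete tuples by upper semicontinuity together with Helly for part (2), and uniqueness of the energy minimizer for part (3). The only cosmetic difference is that the paper truncates the unweighted kernel $-\log|x-y|-\log|f(x)-f(y)|$ while you truncate the full kernel $k(x,y)$; both work since $Q$ is lower semicontinuous and bounded below on the compact set $K$. Your averaging argument against $\mu_{K,Q}^{\otimes(k+1)}$ for the lower bound in (2) is a clean way to fill in the step the paper leaves to \cite{VELD}.
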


\begin{proof} We indicate the main ingredients. To prove the analogue of Proposition 3.1 of \cite{VELD}, which is 1. above, we simply observe that for any $M$, 
\begin{align*}
h_M(x,y) & := \min (M,-\log {|x-y|}-\log {|f(x)-f(y)|}) \\
& \leq -\log {|x-y|}-\log {|f(x)-f(y)|}:=h(x,y)
\end{align*}
and $h(x,y)$ is lower semicontinuous if $f$ is continuous. For 2., the analogue of Proposition 3.2 of \cite{VELD}, by uppersemicontinuity of 
$$(z_0,...,z_k)\to |VDM_k^Q(z_0,...,z_k)|,$$
maximizing $(k+1)-$tuples for $\delta_k^Q(K)$ ({\it weighted Fekete points}) exist. Finally, 3., the analogue of Proposition 3.3 of \cite{VELD}, uses the uniqueness of the measure $\mu_{K,Q}$ which minimizes $E^Q$.\end{proof}

\begin{remark} \label{awfr} Arrays $\{z_j^{(k)}\}_{j=0,...,k; \ k=2,3,...}\subset K$ satisfying (\ref{awfp}) will be called {\it asymptotic weighted Fekete arrays} for $K,Q,f$.

\end{remark}

As a last result in this section, we give a refined version of Lemma \ref{approx1} when $K$ is a compact subset of $\C$. This is an analogue of results in 
\cite[Section 5]{LPLD} and will be used in a similar fashion to prove our large deviation result in the compact case. Here $C(K)$ denotes the class of continuous, real-valued functions on $K$.

\begin{lemma}\label{lemma-scal}
Let $K\subset \C$ be compact and nonpolar and let $\mu\in\MM(K)$ with 
$E^{Q}(\mu)<\infty$. There exist an increasing sequence of compact sets $K_m$ in $K$, a sequence of functions $\{Q_m\}\subset C(K)$, and a sequence of measures $\mu_m\in \MM(K_m)$ satisfying
\begin{enumerate}
\item the measures $\mu_m$ tend weakly to $\mu$, as $m\to\infty$; 
\item the energies $I(\mu_m)$ tend to $I(\mu)$ as $m\to\infty$;
\item the energies $I(f_*\mu_m)$ tend to $I(f_*\mu)$ as $m\to\infty$;
\item  the measures $\mu_m$ are equal to the weighted equilibrium measures $\mu_{K,Q_m}$.
\end{enumerate}
\end{lemma}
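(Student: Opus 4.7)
The plan is to construct $\mu_m$ first, ensuring that both $p_{\mu_m}$ and $p_{f_*\mu_m}$ are continuous on $\C$, and then to \emph{define} $Q_m$ so that Proposition~\ref{exun}(5) identifies $\mu_m$ as the weighted equilibrium measure $\mu_{K,Q_m}$.

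First, I exhaust: pick compacts $K_m'\subset K$ with $\mu(K\setminus K_m')\to 0$ and set $\tilde{\mu}_m:=\mu|_{K_m'}/\mu(K_m')$. The monotone convergence argument already used in the proof of Lemma~\ref{approx1}, together with the finiteness of $E^Q(\mu)$ and the boundedness below of $Q$ on the compact set $K$, gives $\tilde{\mu}_m\to\mu$ weakly with $I(\tilde{\mu}_m)\to I(\mu)$ and $I(f_*\tilde{\mu}_m)\to I(f_*\mu)$; in particular these limits are finite.

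Next, I regularize: replace $\tilde{\mu}_m$ by a measure $\mu_m$ supported on a slightly larger compact $K_m\subset K$ which is still weakly close to $\tilde{\mu}_m$, has close logarithmic and pushed-forward logarithmic energies, and---crucially---has both $p_{\mu_m}$ and $p_{f_*\mu_m}$ continuous on $\C$. Following the strategy of \cite[Section~5]{LPLD}, one convolves $\tilde{\mu}_m$ with a smooth bump of shrinking width $\varepsilon_m\to 0$ and, when $K$ is thin, balayages back onto $K$, tuning the parameters so that items (1), (2), (3) of the statement hold in the limit.

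Given such $\mu_m$, define
$$Q_m(z) := -p_{\mu_m}(z) - p_{f_*\mu_m}(f(z)), \qquad z\in K.$$
Continuity of the two potentials on $\C$ and of $f$ on $K$ forces $Q_m\in C(K)$. With this choice, the identity
$$p_{\mu_m}(z)+p_{f_*\mu_m}(f(z))+Q_m(z)=0 \qquad\text{for every } z\in K$$
holds, so both Frostman-type inequalities \eqref{frost1}--\eqref{frost2} in Proposition~\ref{exun}(5) are satisfied with $C=0$. Since $\mu_m$ has compact support in $K$ and $E^{Q_m}(\mu_m)<\infty$ (the integrand of (\ref{def-EQ}) is continuous on the compact support), Proposition~\ref{exun}(5) yields item (4): $\mu_m=\mu_{K,Q_m}$.

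The main obstacle is the regularization step: one needs a smoothing that simultaneously upgrades the potentials to continuous functions on $\C$, preserves the support inside $K$ (nontrivial when $K$ has empty interior, e.g.\ when $K\subset\R$), and does not destroy the two energy convergences. Mollification of $\tilde{\mu}_m$ followed by balayage onto $K$, combined with the standard potential-theoretic estimates comparing the energies of a measure and its balayage, should suffice; but one has to verify that the extra term $I(f_*\mu_m)$ involving the pushforward by $f$ behaves well under these operations, which is where the continuity of $f$ on the compact $K$ and the boundedness of $f(K)$ are used.
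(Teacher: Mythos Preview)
Your exhaustion step and the final identification via Proposition~\ref{exun}(5)---defining $Q_m := -p_{\mu_m} - p_{f_*\mu_m}\circ f$ and reading off the Frostman conditions with $C=0$---match the paper exactly. The divergence, and the gap, is in how you arrange for $Q_m$ to be continuous.

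You propose to mollify $\tilde\mu_m$ and then balayage back onto $K$ so that both potentials become continuous on all of $\C$. You correctly flag this as ``the main obstacle'' and leave it unverified, and it is a real obstacle. Even granting that balayage onto $K$ preserves continuity of $p_{\mu_m}$ (which already needs $K$ regular), nothing in the construction controls $p_{f_*\mu_m}$: the pushforward by $f$ of a balayaged or mollified measure has no reason to have continuous potential---critical points of $f$ can concentrate mass, and the pushforward does not commute with any of the smoothing operations you perform on the $z$-side. ``Tuning the parameters'' will not repair this.

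The paper sidesteps the whole issue with Lusin's theorem and never regularizes the measure at all. One \emph{chooses} the compacts $K_m$ so that, in addition to $\mu(K\setminus K_m)\le 1/m$, the restrictions $p_\mu|_{K_m}$ and $p_{f_*\mu}|_{f(K_m)}$ are continuous (Lusin applied on $K$ and on $f(K)$ simultaneously). Then one simply takes $\mu_m:=\mu|_{K_m}/\mu(K_m)$ with no further modification. Since $\mu-\mu|_{K_m}$ is a \emph{positive} measure, its potential is lower semicontinuous, so $p_{\mu|_{K_m}}=p_\mu-p_{\mu-\mu|_{K_m}}$ is upper semicontinuous on $K_m$; being automatically lower semicontinuous as well, it is continuous there. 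The identical argument, run on $f(K)$ with $f_*\mu$ in place of $\mu$, gives continuity of $p_{f_*\mu_m}$ on $f(K_m)$, and hence of $Q_m$ on $K_m$. Items (1)--(3) then follow by the monotone convergence you already wrote down, and item (4) is immediate. No smoothing, no balayage, and the $f$-pushforward is handled symmetrically with no extra work.
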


\begin{proof} 
By Lusin's continuity theorem applied in $K$ and $f(K)$, it is easy to verify that, for every integer $m\geq 1$, there exists a compact subset $K_m$ of $K$ such that $\mu(K\setminus K_m)\leq 1/m$, $p_{\mu}$ is continuous on $K_m$, and $p_{f_{*}\mu}$ is continuous on $f(K_{m})$, respectively considered as functions on $K_m$
and $f(K_{m})$ only. We may assume that $K_m$ is increasing as $m$ tends to infinity. Then, the measures $\tilde\mu_m:=\mu_{|K_m}$ are increasing and tend weakly to $\mu$; similarly the measures $f_*\tilde\mu_m=f_*(\mu_{|K_m})$ are increasing and tend weakly to $f_*\mu$. As in the proof of Lemma \ref{approx1}, we have
$$\chi_m(z,t)\log^{+} {|z-t|}\uparrow \log^{+} {|z-t|} \quad \hbox{and} 
\quad \chi_m(z,t)\log^{+} {|f(z)-f(t)|}\uparrow \log^{+} {|f(z)-f(t)|},$$
as $m\to\infty$, $(\mu\times\mu)$-almost everywhere on $K\times K$ where $\chi_m(z,t)$ is the characteristic function of $K_m\times K_m$ and we agree that the left-hand sides vanish when $z=t\notin K_{m}$. Similar pointwise convergence holds true for the negative parts of the log functions. Hence, by monotone convergence we have
$$I(\tilde\mu_m)\to I(\mu),\quad I(f_*\tilde\mu_m)\to I(f_*\mu),\quad\text{as }m\to\infty,$$
where we observe that the compactness of $K$ implies that the energies $I(\mu)$ and $I(f_{*}\mu)$ are well defined. Indeed, because of the assumption $E^{Q}(\mu)<\infty$, the energies $I(\mu)$ and $I(f_{*}\mu)$ are finite but this is not used here.

Next, define $\mu_m:= \tilde \mu_m/\mu(K_m)$ and for $z\in K$,
$$Q_m(z):= -p_{\mu_m}(z)- p_{f_*\mu_m}(f(z)).$$ 
To show $Q_{m}$ is continuous on $K_{m}$, since $p_{\mu_m}$ and $p_{f_*\mu_m}$ are lower semicontinuous, it suffices to show they are upper semicontinuous. For $p_{\mu_m}$ this follows since $p_{\mu-\mu_{m}}=p_{\mu}-p_{\mu_{m}}$ is upper semicontinuous and $p_{\mu}(z)$ is continuous on $K_{m}$. Similarly, $p_{f_*\mu_m}$ is upper semicontinuous since $p_{f_{*}\mu}-p_{f_{*}\mu_{m}}$ is upper semicontinuous and $p_{f_{*}\mu}(z)$ is continuous on $K_{m}$.

Item 4. follows from the fact that $\mu_{m}$ has compact support with $E^{Q_{m}}(\mu_m)<\infty$ (because $E^{Q}(\mu)<\infty$), and it clearly satisfies the Frostman-type inequalities of Proposition \ref{exun} for $K$ and the weight $Q_m$; hence we have $\mu_m=\mu_{K,Q_m}$. We note that the assumption $E^{Q}(\mu)<\infty$ has only been used to prove 4.
\end{proof}

\section{Bernstein-Walsh inequality and Bernstein-Markov property}

Observe that if we fix all the variables in $VDM_k^Q(z_0,...,z_k)$ in (\ref{fvdm}) except one, say $z_j$, the function $z_j\to VDM_k^Q(z_0,...,z_j,...,z_k)$ is of the form $p_k(z_j)q_k(f(z_j))$
where $p_k, q_k$ are polynomials of degree at most $k$ (we write $p_k, q_k\in \mathcal P_k$). Let $K\subset \C$ be compact and nonpolar. In this section, we prove a Bernstein-Walsh type inequality for functions of the slightly more general form 
$$ h_k(z)=p_k(g(z))q_k(f(z)) \ \hbox{where} \  p_k,q_k\in \mathcal P_k$$
but where we assume $f,g$ are {\it holomorphic} functions on a neighborhood $U$ of $\hat K$, the polynomial hull of $K$. Here, 
$$\hat K=\{z\in \C: |p(z)|\leq ||p||_K \ \hbox{for all} \ p\in \bigcup_k\mathcal P_k\}.$$
In other words, $\hat K$ is the complement of the unbounded component of the complement of $K$. We then utilize this Bernstein-Walsh type inequality in conjunction with a mass density assumption on a finite measure $\nu$ on $K$ to obtain (weighted) Bernstein-Markov properties. The (usual) extremal function of $K$ is defined via
 $$V_K(z):=\sup \{u(z): \ u\leq 0 \ \text{on} \ K \ \text{and} \ u\in\mathcal{L}\}$$
   where 
   $$\mathcal{L}:=\{u(z): \ u \ \text{is subharmonic on} \  \C \ \text{ and} \  u(z)\leq\log^+|z| +C, \ \text{for some} \ C=C(u)\}.$$
For $K$ compact, we have
\begin{equation}\label{forbw}V_K(z):=\sup\{\frac{1}{deg(p)}\log |p(z)| : p\in \cup_k \mathcal P_k, \ ||p||_K\leq 1\}\end{equation}
We let $V_K^*(z):=\limsup_{\zeta \to z}V_K(\zeta)$ denote the upper semicontinuous regularization of $V_K$; thus if $K$ is not polar, $V_K^*$ is subharmonic on
   $\mathbb{C}$, harmonic on $\mathbb{C}\setminus K$ and is, in fact, the Green function with a logarithmic pole at $\infty$ for $\mathbb{C}\setminus K$. We say $K$ is {\it regular} if $V_K^*$ is continuous; equivalently, $V_K=V_K^*$. Note that this is a property of the outer boundary of $K$; i.e., the boundary of the unbounded component of the complement of $K$. The logarithmic capacity of $K$ defined in (\ref{capdef}) can be recovered from $V_K^*$: 
   $$cap(K)= \exp \bigl(-\lim_{|z|\to \infty} [V_K^*(z)-\log |z|]\bigr).$$
   The classical Bernstein-Walsh inequality, coming from (\ref{forbw}), is
   \begin{equation}\label{forbw2}|p_k(z)|\leq ||p_k||_Ke^{kV_K(z)}\end{equation}
   for polynomials $p_k\in \mathcal P_k$.
   
     Given an admissible weight $Q$ on $K$, the weighted Green function for the pair $K,Q$ is $V_{K,Q}^*(z)=\limsup_{\zeta \to z} V_{K,Q}(\zeta)$ where
\begin{equation}\label{forbw3}V_{K,Q}(z):=\sup \{\frac{1}{deg(p)}\log |p(z)|: p\in \cup_k \mathcal P_k, \ ||pe^{-deg(p)Q}||_K\leq 1\}$$
$$=\sup \{u(z):u\in \mathcal{L}, \ u\leq Q \ \hbox{on} \ K\}.\end{equation}
Note that $V_K^*,V_{K,Q}^*\in \mathcal{L}^+$ where
$$\mathcal{L}^+:=\{u \ \hbox{subharmonic in} \ \C: \exists C_1,C_2 \ \hbox{with} \ C_1 +\log^+|z|\leq u(z) \leq C_2 + \log^+|z|\}.$$

 Given $K\subset \C$ compact and $f,g$ holomorphic functions on a neighborhood $U$ of $\hat K$, we now consider functions of the form
 \begin{equation}\label{fn}
 h_k(z)=p_k(g(z))q_k(f(z)), \ z\in U,
 \end{equation}
 where $p_k,q_k\in \mathcal P_k$.
 We denote the collection of such functions by $\mathcal{F}_k$. For $K$ a compact set of the plane we define, for $z\in U$, an 
 extremal function for this class of functions:
 \begin{equation}\label{wkdef}
 W_K(z):=\sup\{\frac{1}{k}\log |h_k(z)|: \ h_k\in\mathcal{F}_k \quad\text{and}\quad ||h_k||_K\leq 1\}.
 \end{equation}
Note that $W_K(z)\leq 0$ for $z\in \hat K$ by the maximum principle. We want to get a Bernstein-Walsh type estimate on functions in $\mathcal F_k$ utilizing $W_K$ valid for a wide class of compact sets $K$. By definition,
\begin{equation} \label{nouse} |h_k(z)|\leq ||h_k||_Ke^{kW_K(z)} \ \hbox{for} \ z\in U,\end{equation}
but this estimate is of no use if $W_K(z)$ is not finite.

In the next four potential theoretic lemmas, we fix $D_A$ to be the closure of a bounded domain in $\C$ where $D_A$ is assumed to be regular and has logarithmic capacity $A$. Then $V_{D_A}^*=V_{D_A}$ and $\lim_{|z|\to \infty}[V_{D_A}(z)-\log |z|] =\log A$. 
 
 \begin{lemma}\label{1}
 Let $D_A$ and $\tau >0$ be given. There is a positive constant $L=L(D_A,\tau )$ such that for all compact subsets $K\subset D_A$ with cap($K)>\tau$, all $k=1,2,...$, and all polynomials $p_k$ of degree $k$ we have
 $$||p_k||_{D_A}\leq e^{kL(D_A,\tau )}||p_k||_K.$$
 \end{lemma}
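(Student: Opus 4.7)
The plan is to invoke the classical Bernstein-Walsh inequality (\ref{forbw2}) and then bound the Green function $V_K^*$ uniformly from above on $D_A$ by a constant depending only on the diameter of $D_A$ and on $\tau$. Concretely, (\ref{forbw2}) together with $V_K\leq V_K^*$ gives, for every $p_k\in\PP_k$ and every $z\in\C$,
$$|p_k(z)|\leq\|p_k\|_K\,e^{kV_K^*(z)}.$$
Since $cap(K)>\tau>0$ forces $K$ to be nonpolar, it therefore suffices to find a constant $L(D_A,\tau)$ with $V_K^*(z)\leq L(D_A,\tau)$ for every $z\in D_A$ and every compact $K\subset D_A$ with $cap(K)>\tau$.

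For the second step I would use the standard potential-theoretic representation of the Green function via the equilibrium measure $\mu_K$ of $K$:
$$V_K^*(z)=-\log cap(K)-p_{\mu_K}(z)=-\log cap(K)+\int\log|z-w|\,d\mu_K(w).$$
Setting $d:=\mathrm{diam}(D_A)$, for every $z\in D_A$ and every $w\in\supp(\mu_K)\subset K\subset D_A$ we have $|z-w|\leq d$, hence $\int\log|z-w|\,d\mu_K(w)\leq\log d$. Combined with $-\log cap(K)<-\log\tau$, this yields
$$V_K^*(z)\leq \log d-\log\tau\qquad\text{for every }z\in D_A,$$
so the choice $L(D_A,\tau):=\log(d/\tau)$ meets the claim, and the lemma follows by taking the supremum in $z\in D_A$ in the Bernstein-Walsh estimate.

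There is no serious obstacle: the only nontrivial ingredient is the identity expressing $V_K^*$ through the equilibrium potential, equivalent to Frostman's theorem together with the fact that, for $K$ nonpolar, $V_K^*$ is the Green function of the unbounded component of $\C\setminus K$ with logarithmic pole at infinity. The regularity assumption on $D_A$ plays no role in this argument; it is presumably used only in the subsequent lemmas of Section~4.
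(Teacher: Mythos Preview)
Your proof is correct and in fact more direct than the paper's. The paper argues via Harnack's inequality: it considers the nonnegative harmonic function $V_K^*-V_{D_A}$ on $\overline{\C}\setminus D_A$, whose value at infinity is $\log A-\log cap(K)\leq\log(A/\tau)$, applies Harnack on the level curve $\{V_{D_A}=\log 2\}$ to get $V_K^*\leq C\log(A/\tau)+\log 2$ there, and then pushes this bound inward to $\partial D_A$ by the maximum principle for the subharmonic function $V_K^*$. This yields $L(D_A,\tau)=C\log(A/\tau)+\log 2$ with an unspecified Harnack constant $C$ depending on $D_A$.

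Your route, via the Frostman representation $V_K^*(z)=-\log cap(K)+\int\log|z-w|\,d\mu_K(w)$ and the trivial bound $|z-w|\leq d=\mathrm{diam}(D_A)$, is shorter, gives the explicit constant $L=\log(d/\tau)$, and, as you observe, does not use the regularity of $D_A$ at all. The paper's argument genuinely needs regularity (or at least continuity of $V_{D_A}$) to work with the level set $\{V_{D_A}=\log 2\}$ and to apply Harnack there. Both approaches reduce the lemma to the classical Bernstein--Walsh inequality; yours simply bounds $V_K^*$ on $D_A$ by a cruder but entirely adequate potential-theoretic estimate.
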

 \begin{proof}
 Consider the function $V_K^*(z)-V_{D_A}(z)$.
 This function is nonnegative and harmonic on $\overline{\mathbb{C}}\setminus D_A$
 and has value
 $$\log A-\log cap (K)\leq \log A-\log \tau$$ at
 $\infty$. By Harnack's inequality we have, for $z$ with $V_{D_A}(z)=\log 2$, 
 $$V_K^*(z)-V_{D_A}(z)\leq C\log(\frac{A}{\tau})$$
 where $C$ is a constant independent of $K.$
 Thus,$$V_K^*(z)\leq C\log (\frac{A}{\tau})+\log 2$$
 on $\{z\in \C: V_{D_A}(z)=\log 2\}$.
 
 Since $V_K^*$ is subharmonic on $\mathbb{C}$, by the maximum principle the above bound holds on $\partial D_A$. 
  Now, by the usual Bernstein-Walsh inequality (\ref{forbw2}), 
  $$||p_k(z)||_{D_A}\leq||p_k||_Ke^{kL(D_A,\tau )}$$
  where
  $$L(D_A,\tau )=C\log(\frac{A}{\tau}) +\log 2$$
 \end{proof}

 \begin{lemma}\label{2}
 Let $D_A$ and $\tau >0$ be given. Let $K$ be a compact set of $D_A$ with cap($K)>\tau$. Suppose that
 $$K=\cup_{i=1}^s B_i$$
 where the $B_i$ are Borel sets. Then there is a constant $\sigma=\sigma (D_A,\tau ,s )>0$ such that at least one of the sets $B_i$ is of capacity at least $\sigma$.
 
 \end{lemma}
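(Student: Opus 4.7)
The approach is to decompose the equilibrium energy of $K$ into contributions from the pieces $B_i$ and extract a pigeonhole bound. First I would reduce to the case that the $B_i$ are pairwise disjoint by replacing each $B_i$ with $B_i\setminus\bigcup_{j<i}B_j$; this only decreases each $\mathrm{cap}(B_i)$, so a lower bound on $\max_i\mathrm{cap}(B_i)$ in the disjoint case transfers back to the original decomposition. Set $d:=\max(1,\mathrm{diam}(D_A))$, so $d\geq 1$, $|x-y|\leq d$ for all $x,y\in K$, and $\mathrm{cap}(B_i)\leq \mathrm{cap}(D_A)=A\leq d$ (using $\mathrm{cap}(E)\leq \mathrm{diam}(E)$).

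Next, let $\mu_K$ denote the classical (unweighted) equilibrium measure of $K$, so that $I(\mu_K)=\log(1/\mathrm{cap}(K))\leq \log(1/\tau)$, and put $u_i:=\mu_K(B_i)$; then $\sum_i u_i=1$. I would split
\begin{equation*}
I(\mu_K)=\sum_{i,j=1}^s\iint_{B_i\times B_j}\log\frac{1}{|x-y|}\,d\mu_K(x)\,d\mu_K(y),
\end{equation*}
bounding the diagonal terms below by $u_i^2\log(1/\mathrm{cap}(B_i))$ (since $u_i^{-1}\mu_K|_{B_i}$ is a probability measure on $B_i$) and the off-diagonal terms below by $-u_iu_j\log d$ (since $|x-y|\leq d$). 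Using $\sum_{i\neq j}u_iu_j=1-\sum_i u_i^2$ and rearranging yields the key inequality
\begin{equation*}
\log\frac{d}{\mathrm{cap}(K)}\geq \sum_{i=1}^s u_i^2\log\frac{d}{\mathrm{cap}(B_i)},
\end{equation*}
in which every summand on the right is non-negative by the choice of $d$.

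To finish, I would bound the right-hand side below by $\bigl(\min_i\log(d/\mathrm{cap}(B_i))\bigr)\sum_i u_i^2$ and apply Cauchy--Schwarz in the form $\sum_i u_i^2\geq 1/s$. This forces $\min_i\log(d/\mathrm{cap}(B_i))\leq s\log(d/\tau)$, so some $B_i$ satisfies $\mathrm{cap}(B_i)\geq \tau^s/d^{s-1}$, and one may take $\sigma:=\tau^s/d^{s-1}$. The point requiring some care is the possibility that $\mathrm{cap}(B_i)=0$ for some $i$: since $\mu_K$ has finite logarithmic energy it assigns no mass to polar sets, so automatically $u_i=0$ and the formally indeterminate term $u_i^2\log(1/\mathrm{cap}(B_i))$ is to be interpreted as $0$, in agreement with the fact that the corresponding double integral literally vanishes; the argument then goes through unchanged.
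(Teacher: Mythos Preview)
Your argument is correct. The paper's own proof is a one-line citation of Ransford, Theorem 5.1.4(a), the subadditivity inequality
\[
\frac{1}{\log\bigl(d/\mathrm{cap}(K)\bigr)}\;\le\;\sum_{i=1}^{s}\frac{1}{\log\bigl(d/\mathrm{cap}(B_i)\bigr)}
\]
(for $d\ge\mathrm{diam}(K)$), from which a pigeonhole step gives the identical constant $\sigma=\tau^{s}/d^{\,s-1}$ you obtain. Your route is genuinely different in that it is self-contained: instead of invoking that theorem you decompose $I(\mu_K)$ over $B_i\times B_j$ and arrive at the \emph{weighted} inequality
\[
\log\frac{d}{\mathrm{cap}(K)}\;\ge\;\sum_{i=1}^{s}u_i^{2}\,\log\frac{d}{\mathrm{cap}(B_i)},\qquad u_i=\mu_K(B_i),
\]
then close with Cauchy--Schwarz. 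In effect you are reproving (a variant of) Ransford's subadditivity estimate from scratch; this buys independence from the reference and an explicit constant, at the cost of a longer write-up.

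One small point worth a sentence if you polish this: in the diagonal bound $I\bigl(u_i^{-1}\mu_K|_{B_i}\bigr)\ge\log\bigl(1/\mathrm{cap}(B_i)\bigr)$ the restricted measure is only \emph{concentrated} on $B_i$ (its closed support may overshoot $B_i$ when $B_i$ is not closed), so strictly speaking one passes through compact $F\subset B_i$ with $\mu_K(B_i\setminus F)$ small via inner regularity. This is routine and does not affect the validity of your argument, and your remark on polar $B_i$ (forcing $u_i=0$) correctly disposes of the degenerate terms.
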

 \begin{proof}
 The proof follows from Theorem 5.1.4(a) of \cite{R}. The constant $\sigma$ depends on the diameter of the bounded set $D_A$.
 \end{proof}

 \begin{lemma}\label{3}
  Let $f$ be holomorphic and nonconstant on a neighborhood of $D_A$. Given $\tau>0$, let $K$ be a subset of $D_A$ such that cap($K)>\tau$. Then there is a constant $\beta =\beta (D_A,\tau,f )>0$ such that
    $${cap}(f(K))\geq \beta.$$ 
 \end{lemma}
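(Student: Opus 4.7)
The plan is to localize using a finite cover of $D_A$ on which $f$ admits a standard normal form, invoke Lemma \ref{2} to concentrate capacity on a single piece of the cover, and then propagate the resulting bound through a Green-function comparison.

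First, for each $z_{0}\in D_A$ I would write $f(\zeta)-f(z_0)=(\zeta-z_0)^{m(z_0)}h_{z_0}(\zeta)$ with $h_{z_0}$ holomorphic and nonvanishing at $z_0$, where $m(z_0)\geq 1$ is the local order of vanishing (so $m(z_0)\geq 2$ corresponds exactly to critical points of $f$). On a sufficiently small closed disk around $z_0$, $h_{z_0}$ stays away from zero, and since the disk is simply connected it admits a holomorphic $m(z_0)$-th root. Setting $\phi_{z_0}(\zeta):=(\zeta-z_0)h_{z_0}(\zeta)^{1/m(z_0)}$ one obtains $f(\zeta)-f(z_0)=\phi_{z_0}(\zeta)^{m(z_0)}$ with $\phi_{z_0}'(z_0)\neq 0$, so that, after a further shrinking, $\phi_{z_0}$ is a biholomorphism onto its image and bi-Lipschitz with constants depending only on $D_A$ and $f$. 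Using compactness I would cover $D_A$ by finitely many such closed disks $\overline{\Delta_1},\dots,\overline{\Delta_s}$; the integer $s$, the orders $m_j$, and the bi-Lipschitz constants will all depend only on $D_A$ and $f$.

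Setting $K_j:=K\cap\overline{\Delta_j}$ and applying Lemma \ref{2} to the decomposition $K=\bigcup_j K_j$ yields an index $j_0$ with $\text{cap}(K_{j_0})\geq\sigma$ for some $\sigma=\sigma(D_A,\tau,f)>0$. The bi-Lipschitz bound $|\phi_{j_0}(x)-\phi_{j_0}(y)|\geq L|x-y|$ on $\overline{\Delta_{j_0}}$, applied to the equilibrium measure of $K_{j_0}$ via the pushforward, gives $I(\phi_{j_0,*}\mu)\leq I(\mu)-\log L$ and hence $\text{cap}(\phi_{j_0}(K_{j_0}))\geq L\,\text{cap}(K_{j_0})\geq L\sigma$. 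To convert this into a bound on $\text{cap}(f(K_{j_0}))$ I would invoke the auxiliary inequality $\text{cap}(E^m)\geq\text{cap}(E)^m$ for compact $E\subset\C$ and $m\geq 1$, where $E^m:=\{w^m:w\in E\}$; this follows from the observation that $\frac{1}{m}V_{E^m}^{*}(z^m)\in\mathcal{L}$ and is nonpositive on $E$, whence dominated by $V_E^{*}(z)$ on $\C$, the claim then being read off from the logarithmic asymptotics at infinity. Combined with translation invariance of capacity, this gives $\text{cap}(f(K))\geq\text{cap}(f(K_{j_0}))=\text{cap}(\phi_{j_0}(K_{j_0})^{m_{j_0}})\geq(L\sigma)^{m_{j_0}}$, so I can take $\beta:=\min_{1\leq j\leq s}\min(L\sigma,(L\sigma)^{m_j})>0$, which depends only on $D_A,\tau$ and $f$.

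The main obstacle is the treatment of critical points of $f$ inside $D_A$: away from them $\phi_{z_0}$ reduces to $\zeta-z_0$ up to a bi-Lipschitz factor and the conclusion is immediate, but near a critical point of order $m\geq 2$ the local image is a pure $m$-th power and the capacity inevitably drops by a power, which is why the inequality $\text{cap}(E^m)\geq\text{cap}(E)^m$ is essential. Uniformity of the cover and of the bi-Lipschitz constants, ensured by compactness of $D_A$ together with holomorphy of $f$ on a neighborhood of $D_A$, is what prevents these losses from accumulating and keeps $\beta$ independent of $K$.
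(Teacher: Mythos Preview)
Your proposal is correct and follows essentially the same strategy as the paper: localize via a finite cover on which $f$ factors as a biholomorphism followed by a power, apply Lemma \ref{2} to concentrate capacity on one piece, then push the bound through the bi-Lipschitz and power-map steps. The only difference is cosmetic: the paper cites \cite[Theorems 5.3.1 and 5.2.5]{R} for the Lipschitz and power-map capacity inequalities, whereas you supply direct proofs via the pushforward of the equilibrium measure and a Green-function comparison.
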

 \begin{proof}
 For each point $z_0\in D_A$, there is a neighborhood $V$ of $z_0$ such that the restriction of $f$ to $V,f|_{V}=h^m$, where $h$ is a biholomorphism and $m\in\mathbb{Z}^+.$ Namely if $f'(z_0)\neq 0$ then $f|_{V}$ is a biholomorphism and $m=1$, otherwise $m$ is the least integer such that $f^{(j)}(z_0)\neq 0.$ We may cover $D_A$ by a finite collection of such sets, say $V_i$ for $i=1,2,...,s$ with corresponding positive integers $m_i$. Then we can shrink each set $V_i$ to obtain sets $W_i$ which still cover $D_A$ and such that each $W_i$ has compact closure in $V_i$.
   
   For $J$ a compact subset of a $\overline W_i$ we have
   $$cap(f|_{W_i}(J))\geq C(cap(J))^{m_i}$$
 where for $m_i=1$ we use \cite{R}, Theorem 5.3.1 applied to $(f|_{W_i})^{-1}$ and if $m_i\geq 2$ we use the cited theorem and the fact that under the power map $e_m:z\rightarrow z^m$, Theorem 5.2.5 of \cite{R} gives 
$$[cap(e_m(J))]^{1/m} = cap(e_m^{-1}(e_m(J)))\geq cap(J).$$

 Now $K=\cup_{i=1}^s(K\cap \overline W_i)$ so by Lemma \ref{2} for one of the sets in the union, say $K\cap \overline W_{i_0}$ we have $cap(K\cap\overline W_{i_0})\geq \sigma(D_A,\tau ,s )$ so
 $$cap(f(K))\geq cap (f(\overline W_{i_0}\cap K))\geq Ccap(\overline W_{i_0}\cap K)^{m_{i_0}}\geq C \sigma(D_A,\tau ,s )^{m_{i_0}}.$$
 The constants $C$ which appear above depend only on $f$ and the sets $V_i,W_i$ and not on $K$ so the proof is complete.
 \end{proof}
 
 \begin{remark} Note we do not require $f(K)\subset D_A$ but this assumption will be needed in the next result. \end{remark}

In the upper envelope (\ref{wkdef}) defining $W_K$, given $h_k\in \mathcal{F}_k$ as $h_k(z)=p_k(g(z))q_k(f(z)$, we may multiply $p_k$ by a non-zero scalar $c$ and $q_k$ by $1/c$ without changing $h_k$. We use the following normalization: for $h_k\in \mathcal{F}_k$ and $||h_k||_K=1$ choose a point $z_0\in K$ such that $|h_k(z_0)|=1$. Then multiply $p_k$ and $q_k$ by scalars as above so that $|p_k(g(z_0))|=1$ and $|q_k(f({z_0}))|=1$. The key estimate in this setting is the next result.  
\begin{lemma}\label{l4} Let $f,g$ be holomorphic and nonconstant on a neighborhood of $D_A$ and let $\tau >0$ be given. Let $K$ be a compact subset of $D_A$ such that $f(K), g(K)\subset D_A$ and cap$(K)>\tau$. 
Then there is a constant $M=M(D_A,\tau) >0$ such that for all $k=1,2,...$ and all $h_k \in \mathcal F_k$ normalized as above, 
$$||p_k||_{D_A}\leq e^{Mk}\quad and\quad ||q_k||_{D_A}\leq e^{Mk}.$$
\end{lemma}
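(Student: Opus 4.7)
The plan is to bound $\|p_k\|_{D_A}$ and $\|q_k\|_{D_A}$ separately by combining the pointwise bound $|p_k(g(z))|\,|q_k(f(z))|\leq 1$ on $K$ with the normalization values $|p_k(g(z_0))|=|q_k(f(z_0))|=1$. The governing idea is that if $q_k\circ f$ (respectively $p_k\circ g$) were too small on a subset of $K$ of non-negligible capacity, Lemmas \ref{1}--\ref{3} would propagate that smallness to all of $D_A$, contradicting the value of $q_k$ at $f(z_0)$ (respectively of $p_k$ at $g(z_0)$).

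First I fix constants depending only on $D_A,\tau,f,g$: let $\sigma:=\sigma(D_A,\tau,2)$ come from Lemma \ref{2}, let $\beta_f:=\beta(D_A,\sigma,f)$ and $\beta_g:=\beta(D_A,\sigma,g)$ come from Lemma \ref{3}, and let $L_f:=L(D_A,\beta_f)$, $L_g:=L(D_A,\beta_g)$ come from Lemma \ref{1}. Put $M_0:=L_f+1$ and define the Borel subsets
$$E_k:=\{z\in K:|q_k(f(z))|<e^{-M_0 k}\},\qquad F_k:=K\setminus E_k,$$
where $F_k$ is compact by continuity of $q_k\circ f$. Lemma \ref{2} applied to $K=E_k\cup F_k$ guarantees that at least one of $E_k,F_k$ has capacity $\geq\sigma$.

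Suppose $cap(E_k)\geq\sigma$. Then $cap(\overline{E_k})\geq\sigma$ and, by continuity, $|q_k\circ f|\leq e^{-M_0 k}$ on the compact set $\overline{E_k}$. Lemma \ref{3} applied to $f$ gives $cap(f(\overline{E_k}))\geq\beta_f$, and Lemma \ref{1} applied to the polynomial $q_k$ on the compact set $f(\overline{E_k})\subset D_A$ yields
$$\|q_k\|_{D_A}\leq e^{kL_f}\,\|q_k\|_{f(\overline{E_k})}\leq e^{kL_f}e^{-M_0 k}=e^{-k}<1,$$
contradicting $\|q_k\|_{D_A}\geq|q_k(f(z_0))|=1$. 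Hence $cap(F_k)\geq\sigma$. On $F_k$ the bound $|q_k(f(z))|\geq e^{-M_0 k}$ combined with $|p_k(g(z))||q_k(f(z))|\leq 1$ gives $|p_k(g(z))|\leq e^{M_0 k}$, so $\|p_k\|_{g(F_k)}\leq e^{M_0 k}$. Lemma \ref{3} applied to $g$ provides $cap(g(F_k))\geq\beta_g$, and Lemma \ref{1} then delivers
$$\|p_k\|_{D_A}\leq e^{kL_g}\,\|p_k\|_{g(F_k)}\leq e^{(L_g+M_0)k}.$$

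The bound on $\|q_k\|_{D_A}$ is obtained by repeating this argument verbatim with the roles of $(p_k,g)$ and $(q_k,f)$ interchanged, using threshold $M_0':=L_g+1$: one shows that $cap\{z\in K:|p_k(g(z))|<e^{-M_0'k}\}<\sigma$ (the symmetric contradiction) and concludes $\|q_k\|_{D_A}\leq e^{(L_f+M_0')k}$. Taking $M:=\max(L_g+M_0,L_f+M_0')$ gives the desired constant. The one real obstacle is picking the threshold correctly: $M_0$ must strictly exceed $L_f$ so that the smallness of $|q_k|$ on a capacity-$\sigma$ subset of $f(K)$ propagates through the Bernstein-Walsh estimate of Lemma \ref{1} to a global bound $\|q_k\|_{D_A}<1$ incompatible with the anchoring at $z_0$. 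Once that calibration is in place, Lemmas \ref{1}--\ref{3} execute the rest of the argument mechanically.
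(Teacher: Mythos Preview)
Your argument is correct and follows the same contradiction strategy as the paper, pitting the normalization $|q_k(f(z_0))|=1$ against a global smallness bound obtained via Lemmas \ref{1}--\ref{3}. The one noteworthy difference is where you perform the dichotomy: the paper splits the image $f(K)$ into the level sets $\{t\in f(K):|q_k(t)|\lessgtr e^{M_1k}\}$ and then, in the contradictory branch, must pull the ``large'' set back through $f^{-1}$ to $K$ (invoking the contraction bound $cap(f^{-1}(G_k)\cap K)\geq cap(G_k)/\sup_{D_A}|f'|$ from \cite[Theorem~5.3.1]{R}) before pushing forward via $g$. By contrast, you split $K$ itself into the level sets of $q_k\circ f$, so both pieces already live in $K$ and only push-forwards (via $f$ in one branch, via $g$ in the other) are needed. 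This eliminates the pull-back step entirely and is a mild streamlining of the paper's proof; otherwise the two arguments coincide.
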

\begin{proof}

We give the argument for $q_k$; the one for $p_k$ is similar. We have $cap(f(K))\geq\beta(D_A,\tau,f )>0$ by Lemma \ref{3}. 
Let $\tau'=\sigma(D_A,\beta(D_A,\tau,f) ,2)$ from Lemma \ref{2} and let
$$F_k:=\{t\in f(K): \ q_k(t)\leq e^{M_1k}\}$$ where $M_1$ is to be chosen.
If
\begin{equation}\label{c}
cap(F_k)\geq\tau'
\end{equation}
then by Lemma \ref{1} we have the required estimate on $q_k$:
$$||q_k||_{D_A}\leq ||q_k||_{F_k}e^{kL(D_A,\tau')}\leq 
e^{k(M_1+L(D_A,\tau'))}.
$$
Note here we have used the hypothesis that $f(K)\subset D_A$ to ensure that $F_k\subset D_A$. 

We will show by contradiction that if $M_1$ is sufficiently large then (\ref{c}) must hold.
If (\ref{c}) does not hold then by Lemma \ref{2}
$$cap(G_k)\geq\tau'$$
where $$G_k:=\{t\in f(K): \ q_k(t)\geq e^{M_1k}\}.$$
Now $|p_k(g(z))|\leq e^{-M_1k}$ on $f^{-1}(G_k)\cap K=\{z\in K: \ f(z)\in G_k\}$ since $||h_k||_K=1$ and by \cite{R}, Theorem 5.3.1 
$$cap(f^{-1}(G_k)\cap K)=cap(\{z\in K: \ f(z)\in G_k\})\geq\frac{1}{C} cap(G_k)\geq \tau'/C$$
where $C=\sup||f'||_{D_A}$. But 
$$|p_k(w)|\leq e^{-M_1k} \ \hbox{for} \ w\in g\bigl(f^{-1}(G_k)\cap K\bigr)$$
and Lemma \ref{3} gives
$$cap(g\bigl(f^{-1}(G_k)\cap K\bigr))\geq \beta(D_A,\tau'/C,g) >0.$$
Thus, by Lemma \ref{1}, 
$$||p_k||_{D_A} \leq e^{kL(D_A,\beta(D_A,\tau'/C,g) )}||p_k||_{g\bigl(f^{-1}(G_k)\cap K\bigr)}\leq e^{kL(D_A,\beta(D_A,\tau'/C,g) )}e^{-M_1k}.$$
Here we have used $g(K)\subset D_A$ to insure $g\bigl(f^{-1}(G_k)\cap K\bigr)\subset D_A$. For $M_1$ sufficiently large this contradicts $|p_k(g(z_0))|=1$. 
\end{proof}

We combine the above bounds with the Bernstein-Walsh estimates (\ref{forbw2}) for polynomials and the set $D_A$: for $f,g$ holomorphic on $U \supset D_A$ and $p_k, q_k$ as in Lemma \ref{l4}, i.e., with $h_k\in \mathcal F_k$ normalized so $||h_k||_K=1$, 
$$\frac{1}{k}\log |p_k(g(z))|\leq V_{D_A}(g(z))+M $$
and 
$$\frac{1}{k}\log |q_k(f(z))|\leq V_{D_A}(f(z))+M$$
provided $z \in U$. Note we require
\begin{equation}\label{kcond} K\subset D_A \ \hbox{with} \ f(K), g(K)\subset D_A \ \hbox{and} \ D_A\subset U. \end{equation} 
If $g(z)= z$, this reduces to
\begin{equation}\label{kcond2} K\subset D_A \ \hbox{with} \ f(K) \subset D_A \ \hbox{and} \ D_A\subset U. \end{equation}
We obtain the estimate
$$\frac{1}{k}\log |h_k(z)|\leq 2M+V_{D_A}(g(z)) +V_{D_A}(f(z)), \ z\in U $$ for some constant $M$ for $h_k\in \mathcal F_k$ normalized so $||h_k||_K=1$. Thus the family of subharmonic functions
$$\{\frac{1}{k}\log |h_k(z)|: \ h_k\in\mathcal{F}_k \quad\text{and}\quad ||h_k||_K\leq 1\}$$
is locally bounded above in $U$. This implies that $W_K^*$ is subharmonic on $U$ (see \cite{R}, Theorem 3.4.2) and we have the bound
\begin{equation}\label{BW2}W_K^*(z)\leq 2M+V_{D_A}(g(z)) +V_{D_A}(f(z)), \ z \in U.\end{equation}
This gives a workable Bernstein-Walsh estimate for functions $h_k\in \mathcal{F}_k$, i.e.,
\begin{equation}\label{BW}
|h_k(z)|\leq ||h_k||_Ke^{kW_K^*(z)}, \ z\in U
\end{equation}
with the upper bound (\ref{BW2}) on $W_K^*(z)$. Thus
\begin{equation}\label{BW3}\frac{1}{k}\log \frac{|h_k(z)|}{||h_k||_K} \leq 2M+V_{D_A}(g(z)) +V_{D_A}(f(z)), \ z\in U 
\end{equation}
for all $h_k\in \mathcal F_k$. Note that the right-hand estimates in (\ref{BW2}) and (\ref{BW3}) depend on $D_A$ but the estimates are valid at all $z\in U$ (i.e., at points where $f(z)$ is holomorphic).

We may consider weighted versions of (\ref{wkdef}) and (\ref{BW}). Let $Q\in \AA(K)$. For $z\in U$ we let
\begin{equation}\label{wkqdef} W_{K,Q}(z):=\sup \{\frac{1}{k}\log |h_k(z)|: \  h_k \in \mathcal{F}_k \quad\text{and}\quad ||h_ke^{-kQ}||_K\leq 1\}.\end{equation}
Then $W_{K,Q}\leq Q(z)$ for $z\in K$. Since $\{z\in K: Q(z)< \infty\}$ is not polar, for sufficiently large $C$ the compact set $F:= \{z\in K: Q(z)\leq C\}$ is not polar. Then for $h_k\in \mathcal F_k$ with $||h_ke^{-kQ}||_K\leq 1$ we have $||h_ke^{-kQ}||_F\leq 1$ and
$$||h_k||_F\leq e^{kC}.$$  
From definitions (\ref{wkdef}) and (\ref{wkqdef}), 
$W_{K,Q}(z)\leq W_F^*(z)+C$ for all $z\in F$. Applying (\ref{BW2}) and (\ref{BW3}) with $F$ instead of $K$ (and $M=M(F)$), the family of subharmonic functions defining $W_F^*$ and hence $W_{K,Q}$ is locally bounded above on $U$ and 
$W_{K,Q}^*(z)$ is subharmonic on $U$ with
$W_{K,Q}^*\leq Q(z)$ q.e. on $K$. We get a weighted Bernstein-Walsh estimate for functions $h_k\in \mathcal{F}_k$, namely, from (\ref{wkqdef}),
\begin{equation}\label{BWQ}
|h_k(z)|\leq ||h_ke^{-kQ}||_Ke^{kW_{K,Q}^*(z)}, \ z\in U.
\end{equation}

\begin{remark}\label{IMP} If $f,g:\C \to \C$ are entire, then for any $K\subset \C$ one can find $D_A$ so that the condition (\ref{kcond}) holds; thus the Bernstein-Walsh estimates (\ref{BW}) and (\ref{BWQ}) hold on all of $\C$. Another interesting situation arises taking $f$ and/or $g$ to be branches of power functions $z\to z^{\theta}$ where $\theta >0$. Taking, e.g., $f$ to be a branch defined and holomorphic on $\C\setminus (-\infty,0]$ with $f(z) = |z|^{\theta}$ for $z=|z|>0$, for any $K\subset (0,\infty)$ one can find $D_A\subset H:=\{z\in \C: \Re z >0\}$ so that the condition (\ref{kcond2}) holds. Thus (\ref{BW}) and (\ref{BWQ}) hold on all of $H$. 
\end{remark}

 We next prove a type of regularity of $W_K^*$ in case $K$ is regular. We begin with a lemma. Recall that a compact set $S$ is not thin at a point $\zeta \in S$ if $\limsup_{z\in S\setminus \{\zeta\}}u(z)=u(\zeta)$ for all functions $u$ that are subharmonic in a neighborhood of $\zeta$; otherwise we say $S$ is thin at $\zeta$.

\begin{lemma}
Let $K\subset\mathbb{C}$ be a compact, regular set and let u be a subharmonic function on a neighborhood of $\hat K$. Suppose that $u \leq 0$ q.e. on $K$. Then $u\leq 0$ on $\hat K$.
\end{lemma}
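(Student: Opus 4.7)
The plan is a three-step argument combining the classical maximum principle, the polar-function $-\infty$ trick, and a fine-topology argument. Let $P := \{z \in K : u(z) > 0\}$, which is polar by hypothesis, and note that $u$ is bounded above on $\hat K$ by upper semicontinuity and compactness. First, I would reduce the lemma to proving $u \leq 0$ on all of $K$: since $\hat K \setminus K$ is the union of the bounded components of $\C \setminus K$, each with boundary contained in $K$, the ordinary maximum principle applied to $u$ on each such bounded open set produces $u \leq 0$ there as soon as $u \leq 0$ on $K$.

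To upgrade the q.e.\ bound on $K$ to an everywhere bound, I would use the standard polar-function trick. Since $P$ is polar, there exists a subharmonic function $\psi$ on $\C$, not identically $-\infty$, with $\psi \equiv -\infty$ on $P$; after subtracting $\sup_{\hat K} \psi$ (finite by compactness and upper semicontinuity of $\psi$) one may assume $\psi \leq 0$ on $\hat K$. For each $\varepsilon > 0$, the function $u_\varepsilon := u + \varepsilon \psi$ is subharmonic on the neighborhood $U$ of $\hat K$, and $u_\varepsilon \leq 0$ holds \emph{everywhere} on $K$: on $K \setminus P$ both summands are $\leq 0$, while on $P$ we have $u_\varepsilon \equiv -\infty$. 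By the reduction step applied to $u_\varepsilon$ in place of $u$, we obtain $u_\varepsilon \leq 0$ on all of $\hat K$. Letting $\varepsilon \to 0^+$ pointwise at each $z \in \hat K$ with $\psi(z) > -\infty$ gives $u(z) \leq 0$, i.e.\ $u \leq 0$ on $\hat K \setminus \{\psi = -\infty\}$.

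It remains to remove the polar exceptional set $\{\psi = -\infty\} \cap \hat K$, and here is where the regularity of $K$ is essential. At a point $\zeta$ in the interior of $\hat K$, the submean-value inequality on a small disk $B(\zeta, r) \subset \mathrm{int}(\hat K)$, combined with the fact that polar sets have planar Lebesgue measure zero, yields $u(\zeta) \leq 0$ directly. The main obstacle is the case $\zeta \in \partial \hat K \subset K$, which may lie in $\{\psi = -\infty\}$. Here I would invoke the regularity of $K$ together with the fine-topological characterization of subharmonic functions: since $V_K^*$ is continuous and equal to $0$ on $\hat K$, standard arguments (Brelot's theorem on fine limits, plus Kellogg's theorem that the set of irregular points is itself polar) show that $\hat K$ is non-thin at each point of $\partial \hat K$ outside an automatically negligible polar set, and since non-thinness is preserved by removing a polar set one obtains $u(\zeta) = \limsup_{z\to\zeta,\, z \in \hat K \setminus \{\psi=-\infty\}} u(z) \leq 0$. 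The delicate point is precisely extracting non-thinness of $\hat K$ from the regularity hypothesis, which a priori gives only the non-thinness of the complement $\C \setminus K$ at boundary points.
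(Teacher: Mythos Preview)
Your proof is essentially correct, but it takes a substantial detour compared to the paper's argument, and your closing remarks betray some confusion about the key fact.

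The paper's proof is direct: since $K$ is regular, $K$ is non-thin at each outer boundary point $\xi$ (Ransford, Theorem~4.2.4). The polar set $F=\{z\in K: u(z)>0\}$ is thin everywhere, so $K\setminus F$ remains non-thin at each such $\xi$. Applying the definition of non-thinness to the subharmonic function $u$ gives $u(\xi)=\limsup_{z\in K\setminus F,\,z\to\xi}u(z)\le 0$. Thus $u\le 0$ on the outer boundary of $K$, and the maximum principle on $\hat K$ finishes.

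Your route through the auxiliary function $u_\varepsilon=u+\varepsilon\psi$ is unnecessary: after your steps 1--3 you still have to remove a polar exceptional set from $\hat K$, and for points of $\partial\hat K$ this forces you right back to the same non-thinness argument the paper uses from the outset. The polar-function trick buys nothing here; it just postpones the real work.

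More importantly, your final sentence has the direction reversed. Regularity of $K$ means regularity of the unbounded component $\Omega$ of $\C\setminus K$ for the Dirichlet problem, and the standard characterization says $\zeta\in\partial\Omega$ is regular if and only if $\C\setminus\Omega=\hat K$ is non-thin at $\zeta$ --- not $\C\setminus K$. So non-thinness of $\hat K$ at outer boundary points is \emph{exactly} what the regularity hypothesis hands you; there is nothing delicate to extract. (Equivalently, since $\partial\Omega\subset K$, the set $K$ itself is non-thin at each such point, which is the form the paper uses.) Your invocation of Kellogg's theorem is also superfluous: $K$ is assumed regular, so there are no irregular boundary points to worry about.
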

\begin{proof}
Since $u$ is upper semicontinuous, the set $F=\{z\in K: \ u(z)>0\}$ is an $F_{\sigma}$ set. Since $F$ is a polar set it is thin at all points of $\mathbb{C}$ (see \cite{R}, Theorem 3.8.2). But $K$ is not thin at any of its outer boundary points (\cite{R}, Theorem 4.2.4) so $K\setminus F$ is not thin at any outer boundary point of $K.$  This implies that for $\xi$ an outer  boundary point, $u(\xi)=\limsup_{z\in K\setminus F, \ z\rightarrow \xi}u(z)\leq 0$. Then since $u\leq 0$ on the outer boundary by the maximum principle $u\leq 0$ on $\hat K$.
\end{proof}

\begin{corollary}\label{c7}
Let $K\subset \mathbb{C}$ be a compact, regular set satisfying (\ref{kcond}). Then $W_K^*=0$ on $\hat K$.
\end{corollary}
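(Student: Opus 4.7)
The plan is to apply the lemma just proved to $u := W_K^*$. Two hypotheses need to be verified: that $W_K^*$ is subharmonic on a neighborhood of $\hat K$, and that $W_K^* \leq 0$ quasi-everywhere on $K$.

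The first is immediate from the discussion leading to (\ref{BW2}): under condition (\ref{kcond}), the family $\{\tfrac{1}{k}\log|h_k|:h_k\in\mathcal F_k,\ \|h_k\|_K\leq 1\}$ is locally bounded above on $U$, so $W_K^*$ is subharmonic on $U$, and $U$ contains $\hat K$. For the second, if $h_k\in\mathcal F_k$ with $\|h_k\|_K\leq 1$ and $z\in K$, then $|h_k(z)|\leq 1$, hence $\tfrac{1}{k}\log|h_k(z)|\leq 0$; taking the supremum yields $W_K\leq 0$ on $K$. The standard negligibility principle for upper envelopes of subharmonic functions (see \cite{R}, Theorem 3.4.3) asserts that $\{W_K<W_K^*\}$ is polar, so $W_K^*\leq 0$ quasi-everywhere on $K$. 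The preceding lemma, in whose proof regularity of $K$ enters through non-thinness at outer boundary points, then delivers $W_K^*\leq 0$ on $\hat K$.

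For the reverse inequality, I would exhibit the trivial competitor: take $p_k\equiv q_k\equiv 1\in\mathcal P_k$, so that $h_k\equiv 1$ lies in $\mathcal F_k$ and satisfies $\|h_k\|_K=1$. This gives $\tfrac{1}{k}\log|h_k(z)|=0$ for all $z\in U$, so $W_K\geq 0$ on $U$, and therefore $W_K^*\geq 0$ on $U$, in particular on $\hat K$. Combining with the upper bound yields $W_K^*\equiv 0$ on $\hat K$.

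There is no genuine obstacle here: the corollary is essentially a clean packaging of the preceding lemma together with the trivial lower bound coming from the constant competitor. The only detail worth flagging is that regularity of $K$ is used exclusively through the preceding lemma; everything else is formal, and the proof is a direct analogue of the classical identity $V_K^*\equiv 0$ on $\hat K$ for regular $K$, transcribed to the enlarged envelope $W_K^*$.
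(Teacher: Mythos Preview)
Your proof is correct and follows essentially the same approach as the paper: both apply the preceding lemma to $u=W_K^*$, using that $W_K^*$ is subharmonic on $U\supset\hat K$ and that $W_K^*=W_K$ q.e. You are simply more explicit than the paper in spelling out the lower bound via the constant competitor $h_k\equiv 1$, which the paper's terse proof leaves implicit.
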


\begin{proof} We have that $W_K^*$ is subharmonic on a neighborhood $U$ of $\hat K$, and $W_K\leq 0$ on $\hat K$. Since $W_K^* = W_K$ q.e., the result follows.
\end{proof}

We define a weighted version of the  Bernstein-Markov inequality for functions in $\mathcal{F}_k.$
\begin{definition}
Given $Q\in \mathcal A(K)$, a Borel measure $\mu$ on $K$ satisfies a weighted Bernstein-Markov inequality for $\mathcal{F}_k$, if given $\epsilon >0$, there is a constant $C$ such that for all $k=1,2,...$ and all $h_k\in\mathcal{F}_k$ we have
\begin{equation}\label{BMWtype2}||h_ke^{-kQ}||_K\leq Ce^{\epsilon k}\int_K |h_k(z)|e^{-kQ(z)}d\mu (z).
\end{equation}
If $\mu$ satisfies a weighted Bernstein-Markov inequality for all continuous $Q$ on $K$, we say $\mu$ satisfies a strong Bernstein-Markov inequality for $\mathcal{F}_k$ on $K$.
\end{definition}

We consider the following mass-density condition for positive Borel measures $\mu$ on $K$: {\sl there exist constants $T,r_0>0$ such that for all $z\in K$}, 
\begin{equation}\label{md}
\mu(D(z,r))\geq r^T \ \hbox{for} \ 0<r\leq r_0.
\end{equation}
Here $D(z,r):=\{w\in \C:|w-z|<r\}$. 
\medskip

We will work with the following class of compact sets:

\begin{definition} \label{sreg} We call a compact set $K$ {\it strongly regular} if every connected component
of $\C\setminus K$ is regular with respect to the Dirichlet
problem. 

\end{definition}

An alternate characterization of strongly regular, given in Lemma \ref{thin} below, is that $K$ is not thin at each of its points. Note that a strongly regular compact set is, indeed, regular; for $K$ is regular precisely when the unbounded component of $\C\setminus K$ is regular with respect to the Dirichlet
problem.  Thus any regular compact set $K$ with connected complement, i.e., $K=\hat K$, is strongly regular. In particular, any regular compact subset of the real line is strongly regular, as is the closure of a bounded domain with $C^1$ boundary. The union of the unit circle with a non-regular compact subset of a smaller circle is regular but not strongly regular. The reason we consider the class of sets in Definition \ref{sreg} is that regularity of a compact set is a property of its outer boundary while, when one considers weighted situations, other points in $K$ can be of influence. Recall for a compact set $K\subset \C$ and a point $z\in K$, we say that Wiener's criterion 
holds at $z$ if
\begin{equation}\label{wiener} \sum_n \frac{n}{\log 1/cap(K\cap S_n)}=\infty \end{equation}
where $S_n=D(z,2^{-n})\setminus D(z,2^{-n-1})$.
Wiener's theorem (cf., \cite{R}, Theorem 5.4.1) states that $K$ is not thin at $z$ precisely when (\ref{wiener}) holds. In particular, if $z$ is a boundary point
of a connected component $G$ of $\C\setminus K$, then
$z$ is a regular boundary point of $G$ with respect to the Dirichlet problem
if and only if Wiener's criterion
holds at $z$. Thus $G$ is regular with respect to the
Dirichlet problem if and only if Wiener's criterion holds at every
boundary point of $G$. This last observation gives the reverse implication of the next result.

\begin{lemma} \label{thin} If $K$ is a compact subset of $\C$ such that every connected component
of $\C\setminus K$ is regular with respect to the Dirichlet
problem, then Wiener's criterion holds at every point of $K$.
\end{lemma}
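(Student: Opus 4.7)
The plan is to verify Wiener's criterion (\ref{wiener}) at every $z\in K$, equivalently, to show $K$ is non-thin at $z$. I would split according to whether $z$ is an interior point of $K$ or a boundary point. In the interior case, $K\cap S_n=S_n$ for $n$ large and $\capa(S_n)$ is comparable to $2^{-n}$, so the series diverges trivially. The substantive case is $z\in\partial K$: then $z$ lies on the boundary of at least one component $G$ of $\C\setminus K$, and by hypothesis $G$ is regular at $z$, so by the classical Wiener theorem applied to $G$, the set $\C\setminus G$ is non-thin at $z$. The goal is to bridge from non-thinness of $\C\setminus G$ to non-thinness of $K$ itself.

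I would argue by contradiction: suppose $K$ is thin at $z$ and fix a disk $D=D(z,r_0)$ together with a subharmonic function $u$ on $D$ normalized so that $u(z)=0$, $u\leq -1$ on $K\cap D\setminus\{z\}$, and $M:=\sup_D u<\infty$. The crux is to propagate this upper bound from $K$ to each component $G'\neq G$ of $\C\setminus K$ with $z\in\partial G'$; by hypothesis each such $G'$ is regular at $z$. On the open set $G'\cap D$, the function $u$ is subharmonic with $u\leq -1$ on $(\partial G'\cap D)\setminus\{z\}\subset K$ and $u\leq M$ on $\partial D\cap\overline{G'}$. The two-constants theorem (the single polar exception at $z$ is harmless) gives $u(w)\leq M-(M+1)\omega(w)$, where $\omega$ is the harmonic measure of $\partial G'\cap D$ in $G'\cap D$. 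Regularity of $G'$ at $z$ passes to $G'\cap D$ by monotonicity of thinness in the set, so $\omega(w)\to 1$ as $w\to z$ in $G'\cap D$, whence $\limsup_{G'\ni w\to z}u(w)\leq -1$.

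Combining the bound on $K\setminus\{z\}$ with the bounds on the relevant components $G'\neq G$, and using that only components with $z$ on their boundary can accumulate at $z$, I conclude $\limsup_{(\C\setminus G)\setminus\{z\}\ni\zeta\to z}u(\zeta)\leq -1<0=u(z)$. Hence $u$ witnesses thinness of $\C\setminus G$ at $z$, contradicting regularity of $G$ at $z$. The main obstacle is precisely this propagation step: the containment $K\subset\C\setminus G$ yields only the easy direction $K$ non-thin $\Rightarrow\C\setminus G$ non-thin, and the reverse direction we need requires a separate subharmonic majorization argument on every component of $\C\setminus K$ touching $z$, leveraging the full strong regularity hypothesis rather than just regularity of $G$ itself.
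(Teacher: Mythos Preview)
Your dichotomy ``$z$ is an interior point of $K$, or $z\in\partial K$ and then $z\in\partial G$ for some component $G$ of $\C\setminus K$'' is incomplete, and this is precisely where the real difficulty lies. A boundary point of $K$ need not lie on the boundary of \emph{any} component of $\C\setminus K$. For a concrete example, take $K$ to be the closed unit disk with a sequence of disjoint open disks $D_n$ removed, where $D_n$ has center at distance $1/n$ from the origin and radius $1/n^3$. Every component of $\C\setminus K$ (the exterior and each $D_n$) is regular, the origin belongs to $\partial K$, yet the origin is not in $\overline{D_n}$ for any $n$ and is not on the outer boundary. Your argument never gets started in this case, because there is no component $G$ whose regularity at $z$ can be contradicted. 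The paper's proof isolates exactly this situation as the only nontrivial one and handles it by a direct verification of Wiener's criterion via capacity estimates on annuli $S_n$: either infinitely many annuli have every circle meeting $K$ (and a contraction argument bounds $\capa(K\cap S_n)$ from below), or infinitely many annuli contain a circle disjoint from $K$, which forces adjacent circles to lie in different components and hence a radial projection argument applies.

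There is a second, smaller imprecision even in the case you do treat. Your phrase ``only components with $z$ on their boundary can accumulate at $z$'' is correct for any \emph{single} component $G'$ with $z\notin\partial G'$ (then $\mathrm{dist}(z,G')>0$), but it does not exclude a sequence $\zeta_k\to z$ with $\zeta_k\in G'_k$ lying in infinitely many \emph{distinct} such components. One can patch this when the $G'_k$ are eventually contained in $D$ (then $\partial G'_k\subset K\cap D\setminus\{z\}$ and the maximum principle gives $u\le-1$ on $G'_k$ directly), but components that simultaneously approach $z$ and exit $D$ require a uniform harmonic-measure estimate across infinitely many domains, which you have not supplied. This is not insurmountable, but it is not automatic either, and in any event the missing third case above is the essential gap.
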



\begin{proof} Fix $z\in K$; without loss of generality we may assume $z=0$. Since the capacity of the annulus 
$S_n=D(0,2^{-n})\setminus D(0,2^{-n-1})$ is $2^{-n}$, Wiener's criterion 
is certainly true at 0 if 0 is an interior point. Also, by the hypothesis and Wiener's theorem
this criterion holds provided $0$ is a boundary point of a connected component of 
$\C\setminus K$. Thus it is left to verify Wiener's criterion when $0$ is a boundary point
of $K$, but $0$ does not belong to the boundary of any of the components
of $\C\setminus K$.

There are two cases:
\smallskip

1. There are infinitely many $n$ such that for every $r\in [2^{-n-1},2^{n})$
the circle $C(0,r)=\{w: |w|=r\}$ intersects $K$. We consider such
an $n$ and let $w_r\in C(0,r)\cap K$. The mapping $w\to |w|$ is a 
contraction mapping of $K\cap (D(0,2^{-n})\setminus D(0,2^{-n-1}))$
to the interval $[2^{-n-1},2^{-n})$,
which, by assumption, maps onto $[2^{-n-1},2^{-n})$. Since the logarithmic
capacity does not increase under a contraction mapping, and the
capacity of $[2^{-n-1},2^{-n})$ is $2^{-n-1}/4=2^{-n-3}$, we obtain in
this case that $cap(K\cap S_n)\ge2^{-n-3}$, and hence
for this particular $n$ we have
$$\frac{n}{\log 1/cap(K\cap S_n)}\ge \frac{n}{(n+3)\log 2}\ge \frac18.$$
Since this is true for infinitely many $n$, (\ref{wiener}) holds.
\smallskip

2. For all sufficiently large $n$ there is an $r_n\in [2^{-n-1},2^{-n})$
such that $C(0,r_n)$ is disjoint from $K$, i.e., it lies in
a component $G_{r_n}$ of $\C\setminus K$. This $G_{r_n}$ cannot be
the same for infinitely many $n$, for then $0$ would be a
boundary point of that component. Thus, there are
infinitely many $n$ such that $G_{r_n}$ and $G_{r_{n+1}}$ are different.
But then every radial segment $\{re^{it}: r_{n+1}\le r \le r_n\}$ must
intersect $K$, hence the mapping $\{re^{it}\to 2^{-n-2}e^{it}\}$
is a contraction mapping from $K\cap (S_n\cup S_{n+1})$ onto $C(0,2^{-n-2})$.
Therefore,
$$cap(K\cap (S_n\cup S_{n+1}))\ge cap(C(0,2^{-n-2}))=2^{-n-2},$$
and by \cite{R}, Theorem 5.1.4, we have then either
$$\frac{n}{\log 1/cap(K\cap S_n)}\ge \frac{n}{2(n+2)\log 2}\ge \frac16$$
or
$$\frac{n+1}{\log 1/cap(K\cap S_{n+1})}\ge \frac{n+1}{2(n+2)\log 2}\ge \frac16.$$
Thus the series in (\ref{wiener}) contains infinitely
many terms which are at least $1/6$; hence (\ref{wiener}) holds.
\end{proof}


The following result, which is interesting in its own right, will be needed to prove that for strongly regular compact sets, condition (\ref{md}) on $\mu$ implies the strong Bernstein-Markov property.

\begin{lemma}\label{ancona} Let $K$ be a strongly regular compact subset of $\C$. For any $z\in K$ and
$r>0$, there is a regular compact set $L\subset K\cap D(z,r)$ which contains $K\cap D(z,r/2)$. 
\end{lemma}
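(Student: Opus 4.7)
My plan is to take $L = K \cap \overline{D(z, r')}$ for a suitably chosen radius $r' \in (r/2, r)$; the inclusions $K \cap D(z, r/2) \subset L \subset K \cap D(z, r)$ are then immediate. Regularity of $L$ is equivalent to $L$ not being thin at every outer boundary point, and thinness is a local property, so at any $\zeta \in L$ with $|\zeta - z| < r'$ the set $L$ locally coincides with $K$; by Lemma \ref{thin}, strong regularity of $K$ yields Wiener's criterion at every point of $K$, and hence $L$ is not thin at such $\zeta$. The only possible irregularities of $L$ therefore lie in $K \cap C(z, r')$, where $L$ near a boundary point $\zeta$ equals $K$ intersected with the closed half-plane on the $z$-side of the tangent to $C(z, r')$; irregularity at $\zeta$ means this ``inward-facing half'' of $K$ is thin at $\zeta$.

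The key step is selecting $r'$ so that no such bad point appears. Each bad point $\zeta$ fixes the radius $r' = |\zeta - z|$, so the bad radii in $(r/2, r)$ are precisely the image under $\zeta \mapsto |\zeta - z|$ of the set of ``one-sidedly thin'' points of $K$ relative to $z$. By Kellogg's theorem, for each fixed $r'$ the set of irregular points of $L$ is $F_\sigma$ polar; combining this with Wiener's criterion applied on annular shells around candidate bad points, one should be able to show that the totality of one-sidedly thin points forms a polar subset of $K$, whose Lipschitz image under $\zeta \mapsto |\zeta - z|$ has Lebesgue measure zero in $(r/2, r)$. A valid $r'$ can then be chosen from the complement.

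I expect the main obstacle to be this last measure-theoretic argument, which requires showing that one-sided thinness across varying directions aggregates to a polar set. As a fallback, one can instead cover the compact set $K \cap \overline{D(z, r/2)}$ by finitely many small closed disks $\overline{D(w_j, \rho_j)} \subset D(z, r)$ with $w_j \in K$, choose each $\rho_j$ so that $K \cap \overline{D(w_j, \rho_j)}$ is regular (by the same single-centre analysis applied at $w_j$), and take $L = \bigcup_j \bigl( K \cap \overline{D(w_j, \rho_j)}\bigr)$. Any outer boundary point of the union is an outer boundary point of some summand, and non-thinness is local, so regularity is preserved under finite unions; hence $L$ is regular.
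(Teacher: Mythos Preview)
Your approach is genuinely different from the paper's --- the paper does not try to find a single good radius $r'$, but instead invokes Ancona's theorem to build $L$ as $(K\cap\overline{D(z,r/2)})\cup\bigcup_n F_n$, where each $F_n$ is a regular compact subset of the annular slice $K\cap(\overline{D(z,r/2+2^{-n})}\setminus D(z,r/2))$ with $\capa(K_n\setminus F_n)<e^{-n^3}$; regularity of $L$ is then checked directly via Wiener's criterion, the small-capacity remainders being absorbed into the Wiener sum.

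Your proposal, however, has a genuine gap at exactly the point you flag. For each fixed $r'$, Kellogg's theorem gives that the irregular boundary points of $K\cap\overline{D(z,r')}$ form a polar subset of $C(z,r')$; but to find a radius with \emph{no} bad point you need control over the union of these polar slices as $r'$ varies over an interval, and polar subsets of circles can be uncountable and dense. You assert that the totality of ``one-sidedly thin'' points of $K$ (relative to the centre $z$) is itself polar, but give no argument, and I do not see a short one: the relevant half-plane direction varies with $\zeta$, so this is not the classical statement that the set of thin points of a fixed set is polar, and Fubini-type reasoning only yields that almost every circle has a measure-zero bad set, not an empty one. Your fallback does not escape the difficulty, since it explicitly invokes ``the same single-centre analysis applied at $w_j$'' --- i.e., it assumes the very claim you have not proved. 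Without an independent argument that some radius is clean (or a different construction, such as the paper's Ancona-based one), the proof is incomplete.
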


\begin{proof} For simplicity we take $z=0$ and $r\le 1/2$. By Ancona's theorem \cite{An} the set
$K_n:=K\cap (\overline{D(0,r/2+2^{-n})}\setminus D(0,r/2))$, if nonpolar, contains a regular compact set $F_n$ such that
$$cap \left(\left[K\cap (\overline{D(0,r/2+2^{-n})}\setminus D(0,r/2))\right]\setminus F_n\right)<e^{-n^3}.$$
Setting $F_n=\emptyset$ if $K_n$ is polar, we define 
$$L:=\left(K\cap\overline{D(0,r/2)}\right)\bigcup \left( \cup_n F_n\right).$$
We claim that $L$ is regular. We need to prove that any outer boundary point $z_0$ of $L$ is a regular point. From the 
Wiener criterion (\ref{wiener}), we must show
\begin{equation}\label{e1} \sum_n \frac{n}{\log \bigl(1/cap(L\cap S_n)\bigr)}=\infty \end{equation}
where $S_n=D(z_0,2^{-n})\setminus D(z_0,2^{-n-1})$.

For $z_0\in L$ outside the disk $\overline{D(0,r/2)}$ the union representing $L$ is a locally finite union; thus 
(\ref{e1}) holds by regularity of the sets $F_n$. Also, by the strong regularity of $K$ -- note $L\subset K$ implies $cap(L\cap E)\leq cap(K\cap E)$ for any set $E$ -- (\ref{e1}) is true
for $z_0\in L\cap D(0,r/2)$ (this statement is not necessarily true without the strong regularity hypothesis). It remains to prove (\ref{e1}) for $|z_0|=r/2$. By the strong regularity of 
$K$ we have
$$\sum_n \frac{n}{\log \bigl(1/cap(K\cap S_n)\bigr)}=\infty. $$
Using Theorem 5.1.4 of \cite{R}, since $r\leq 1/2$ it follows that either
\begin{equation}\label{e3} \sum_n \frac{n}{\log \bigl(1/cap(K\cap\overline {D(0,r/2)}\cap S_n)\bigr)}=\infty, \end{equation}
or
\begin{equation}\label{e4} \sum_n \frac{n}{\log \bigl(1/cap(K\cap(D(0,r)\setminus D(0,r/2))\cap S_n)\bigr)}=\infty \end{equation}
(or both). If (\ref{e3}) holds then (\ref{e1}) is true since $L$ contains $K\cap \overline {D(0,r/2)}$,
so assume (\ref{e4}) is true. If ${\cal N}$ is the set of those $n$ for which
$$\frac{n}{\log \bigl(1/cap(K\cap(D(0,r)\setminus D(0,r/2))\cap S_n)\bigr)}>\frac2{n^2},$$
then we still have
\begin{equation}\label{e5} \sum_{n\in {\cal N}} \frac{n}{\log \bigl(1/cap(K\cap(D(0,r)\setminus D(0,r/2))\cap S_n)\bigr)}=\infty. \end{equation}
But again using Theorem 5.1.4 of \cite{R}, 
$$\frac{1}{\log \bigl(1/cap(K\cap(D(0,r)\setminus D(0,r/2))\cap S_n)\bigr)}$$
is no bigger than the sum of 
$$\frac{1}{\log \bigl(1/cap(L\cap(D(0,r)\setminus D(0,r/2))\cap S_n)\bigr)}$$
and
$$\frac{1}{\log \bigl(1/cap((K\setminus L)\cap(D(0,r)\setminus D(0,r/2))\cap S_n)\bigr)}.$$
This latter quantity is no bigger than 
$$\frac{1}{\log \Bigl(1/cap\left(\left[K\cap (\overline{D(0,r/2+2^{-n})}\setminus D(0,r/2))\right]\setminus F_n\right)\Bigr)},$$
which is smaller than $1/n^3$ by our choice of $F_n$. Thus for $n\in {\cal N}$, we necessarily have
$$\frac{n}{\log \bigl(1/cap(L\cap(D(0,r)\setminus D(0,r/2))\cap S_n)\bigr)}> \frac{n/2}{\log \bigl(1/cap(K\cap(D(0,r)\setminus D(0,r/2))\cap S_n)\bigr)}.$$
Hence, in this case, (\ref{e1}) is a consequence of (\ref{e5}).
\end{proof}

\begin{theorem}\label{BM} Suppose $K\subset \C$ is a compact, strongly regular set satisfying (\ref{kcond}) and  $\mu$ is a Borel measure on $K$ satisfying the mass-density condition (\ref{md}). Then $\mu$ is a strong Bernstein-Markov measure for $\mathcal{F}_k$ on $K$.
\end{theorem}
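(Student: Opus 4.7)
The plan is to combine the weighted Bernstein-Walsh estimate (\ref{BWQ}) with Cauchy's derivative estimate and the mass-density condition (\ref{md}). The crucial preliminary step is to upgrade the q.e.\ inequality $W_{K,Q}^* \leq Q$ on $K$, provided directly by the definition of $W_{K,Q}^*$, to a pointwise inequality; this is where strong regularity enters. Fix a continuous $Q$ on $K$ and $\epsilon > 0$, set $\eta := \epsilon/(4(T+1))$, and assume by rescaling that $\|h_k e^{-kQ}\|_K = 1$ is attained at some $z_0 \in K$, so $|h_k(z_0)| = e^{kQ(z_0)}$.

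I claim $W_{K,Q}^*(z_0) \leq Q(z_0)$ for every $z_0 \in K$. Indeed, $W_{K,Q}^*$ is subharmonic on $U \supset \hat K \ni z_0$, and $W_{K,Q}^* \leq Q$ on $K \setminus F$ for some polar set $F$. Since capacity is unchanged upon removing a polar set, Wiener's criterion (\ref{wiener}) for $K \setminus F$ at $z_0$ coincides with that for $K$, and strong regularity together with Lemma \ref{thin} gives that $K \setminus F$ is not thin at $z_0$. Applying non-thinness to the subharmonic $W_{K,Q}^*$ and using continuity of $Q$,
$$W_{K,Q}^*(z_0) \;=\; \limsup_{\substack{z \to z_0 \\ z \in (K\setminus F)\setminus\{z_0\}}} W_{K,Q}^*(z) \;\leq\; \limsup_{z \to z_0} Q(z) \;=\; Q(z_0).$$
Extending $Q$ continuously to $\tilde Q$ on $\C$ by Tietze, the USC function $\phi := W_{K,Q}^* - \tilde Q$ satisfies $\phi \leq 0$ on $K$, so the closed set $\{\phi \geq \eta/2\}$ lies at positive distance from compact $K$; combined with uniform continuity of $\tilde Q$ this produces $R_0 > 0$ independent of $z_0 \in K$ such that $W_{K,Q}^*(z) \leq Q(z_0) + \eta$ on $D(z_0, R_0)$. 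Shrinking $R_0$ if necessary I arrange also $D(z_0, R_0) \subset U$ and $|Q(w) - Q(z_0)| < \eta$ whenever $w \in K \cap D(z_0, R_0)$.

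With this in hand, (\ref{BWQ}) gives $|h_k(z)| \leq e^{k(Q(z_0) + \eta)}$ on $D(z_0, R_0)$, and since $h_k$ is holomorphic on $U$, Cauchy's derivative estimate yields $|h_k'(w)| \leq (4/R_0) e^{k(Q(z_0)+\eta)}$ on $D(z_0, R_0/2)$. Integrating along a straight segment from $z_0$ shows $|h_k(w)| \geq \tfrac{1}{2} e^{kQ(z_0)}$ for $|w - z_0| \leq r_k := (R_0/8) e^{-k\eta}$. Taking $k$ large enough that $r_k \leq r_0$ (the mass-density radius), the weight bound gives $|h_k(w)| e^{-kQ(w)} \geq \tfrac{1}{2} e^{-k\eta}$ on $K \cap D(z_0, r_k)$, so by (\ref{md}),
$$\int_K |h_k| e^{-kQ}\, d\mu \;\geq\; \tfrac{1}{2} e^{-k\eta} r_k^T \;=\; \tfrac{(R_0/8)^T}{2}\, e^{-k\eta(T+1)} \;=\; \tfrac{(R_0/8)^T}{2}\, e^{-k\epsilon/4}.$$
This is (\ref{BMWtype2}) with $C = 2(8/R_0)^T$, after absorbing the finitely many small $k$ into $C$; since both $Q$ and $\epsilon$ were arbitrary we obtain the strong Bernstein-Markov property for $\mathcal{F}_k$ on $K$.

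The main obstacle is the first step: the q.e.\ bound on $W_{K,Q}^*$ coming from the definition is not enough, because the subsequent Cauchy estimate requires the exponential control $|h_k(z)| \leq e^{k(Q(z_0)+O(\eta))}$ on a \emph{whole neighborhood} of the (possibly exceptional) maximizing point $z_0$. Upgrading the bound to a genuine pointwise inequality at every $z_0 \in K$ is precisely what strong regularity, through Wiener's criterion at every point (Lemma \ref{thin}), is designed to deliver.
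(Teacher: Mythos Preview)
Your proof is correct and takes a genuinely different route from the paper's.

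The paper localizes: it covers $K$ by small disks, invokes Lemma~\ref{ancona} (built on Ancona's theorem) to produce regular compact pieces $L_j\subset K$, and then applies the \emph{unweighted} Bernstein--Walsh estimate (\ref{BW}) on each $L_j$ via Corollary~\ref{c7} ($W_{L_j}^*=0$ on $\hat L_j$). Strong regularity enters only through Lemma~\ref{ancona}. You instead work globally with the \emph{weighted} extremal function $W_{K,Q}^*$ and upgrade the q.e.\ inequality $W_{K,Q}^*\le Q$ on $K$ to a pointwise one directly from Lemma~\ref{thin}: since $K$ is not thin at any of its points and removing a polar set preserves non-thinness (capacities in the Wiener series are unchanged), the subharmonic $W_{K,Q}^*$ inherits the bound at every $z_0\in K$. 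Upper semicontinuity then gives the uniform neighborhood bound, and the rest (Cauchy estimate, mass density) runs parallel to the paper's endgame.

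Your argument is more economical: it bypasses the Ancona construction entirely and uses only Lemma~\ref{thin} together with the weighted estimate (\ref{BWQ}) already on the shelf. The paper's localization, on the other hand, reduces everything to the cleaner unweighted situation $W_{L_j}^*=0$ and would adapt more readily if $Q$ were merely admissible rather than continuous (since the pieces $L_j$ do not see $Q$). For the strong Bernstein--Markov property as stated, where $Q$ is continuous, your direct route is a legitimate simplification.
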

\begin{proof}

Fix $Q\in C(K)$. Given $\epsilon >0$ choose $\delta >0$ such that  $|Q(z_1)-Q(z_2)|\leq\epsilon$ for $z_1,z_2\in K$ with $|z_1-z_2|\leq\delta$ and so that $\{z \in \C: d(z,K)\leq \delta\} \Subset U$ ($d$ being the Euclidean distance). Take a finite collection of disks $\{D(z_j,\delta/4)\}_{j=1,...,m}$ with centers $z_j\in K$ that cover $K$. Since $K$ is strongly regular, by Lemma \ref{ancona}, for each $j$ we can find a regular compact set  $L_j\subset K\cap \overline {D(z_j,\delta/2)}$ with $K\cap \overline {D(z_j,\delta/4)}\subset L_j$. By Corollary \ref{c7}  $W_{L_j}^*$ is continuous on $L_j$ and $W_{L_j}^*=0$ on $L_j$. Thus we can find 
$\sigma =\sigma(\epsilon) > 0$ with $W_{L_j}^*\leq \epsilon$ for all $\zeta$ with $d(\zeta,L_j)\leq \sigma$ for $j=1,...,m$.

Now fix $h_k\in\mathcal{F}_k$ and let $w\in K$ be a point where the function $|h_k(z)|e^{-kQ(z)}$ assumes its maximum on $K$. Then $w\in L_j$ for some $j\in \{1,...,m\}$. For $\zeta \in \overline {D(w,\sigma)}$,
\begin{equation}\label{one}|h_k(\zeta)|\leq ||h_k||_{L_j}e^{k\epsilon}\end{equation}
by the Bernstein-Walsh estimate (\ref{BW}) for $L_j$. On the other hand, by choice of $w$, for any $z\in L_j$,
$$|h_k(z)|e^{-kQ(z)}\leq |h_k(w)|e^{-kQ(w)}$$
(since $L_j\subset K$) and since $|z-w|\leq \delta$ (for $z,w\in \overline {D(z_j,\delta/2)}$), we have $|Q(z)-Q(w) |\leq \epsilon$ so that
$$|h_k(z)|\leq |h_k(w)|e^{k\epsilon}$$
for all $z\in L_j$; i.e., 
\begin{equation}\label{two}||h_k||_{L_j}\leq |h_k(w)|e^{k\epsilon}.\end{equation}
Combining (\ref{one}) and (\ref{two}), we have
$$|h_k(\zeta)|\leq |h_k(w)|e^{2k\epsilon}$$
for all $\zeta \in \overline {D(w,\sigma)}$.

Consider the function
\begin{equation}\label{ut}
U(t):=h_k\Big(w+t\frac{z-w}{|z-w|}\Big).
\end{equation}
Then $t\rightarrow U(t)$ is holomorphic and $U(0)=h_k(w)$ while $U(|z-w|)=h_k(z)$. Also 
\begin{equation}\label{supest}
||U||_{|t|\leq\sigma}\leq |h_k(w)|e^{2k\epsilon}
\end{equation}
and
$$h_k(z)-h_k(w)=U(|z-w|)-U(0)=\int_0^{|z-w|}U'(t)dt.$$
For $z\in D(w,\frac{\sigma }{2})$ we have
$$|h_k(z)-h_k(w)|\leq |z-w|||U'||_{|t|\leq \frac{\sigma }{2}}.$$
Using the Cauchy estimate on $U'$ and (\ref{supest}) we have
$$|h_k(z)-h_k(w)|\leq |z-w|\frac{2}{\sigma}|h_k(w)|e^{2k\epsilon }.$$

Now let $r_k:=e^{-3k\epsilon}$, so
$r_k\leq\frac{\sigma}{4}e^{-2k\epsilon}$ for $k$ large. For $z\in D(w,r_k)$, we have
$$|h_k(z)-h_k(w)|\leq\frac{\sigma}{4}e^{-2k\epsilon}\frac{2}{\sigma }|h_k(w)|e^{2k\epsilon}=\frac{1}{2}|h_k(w)|.$$
So $$|h_k(z)|\geq\frac{1}{2}|h_k(w)|$$
for $z\in D(w,r_k)$ and
$$||h_ke^{-kQ}||_{L^1(\mu)}\geq\int_{K\cap D(w,r_k)}|h_k|e^{-kQ}d\mu$$
$$\geq\frac{1}{2}|h_k(w)|e^{-kQ(w)}e^{-k\epsilon}\mu (D(w,r_k))$$

$$\geq Ce^{-k\epsilon_1}||h_ke^{-kQ}||_K$$ where 
$\epsilon_1=\epsilon (1+3T)$, since for $k$ sufficiently large $\mu (D(w,r_k))\geq r_k^T\geq e^{-3k\epsilon T}.$
\end{proof}

\begin{example} \label{bmex} Some cases where condition (\ref{md}) is satisfied are the following:
\begin{enumerate}
\item $K\subset \R$ is a finite union of compact intervals and $d\mu= dx$, Lebesgue measure; 
\item $K=[0,1]\subset \R$ and $d\mu= x^{\alpha}dx$ where $\alpha >0$;
\item $K\subset \C$ is a fat ($K=\overline {K^o}$) compact set with $C^1$ boundary and $\mu$ is planar Lebesgue measure.
\end{enumerate}

\end{example}

For future use, we generalize the weighted Bernstein-Walsh estimate (\ref{BWQ}) to the unbounded setting in the case where $g(z)=z$. Here, for $K\subset \C$ closed and $Q$ an $f-$admissible weight on $K$ where $f$ is a holomorphic function on a neighborhood $U$ of $K$, the functions in $\mathcal F_k$ are of the form $h_k(z)=p_k(z)q_k(f(z))$ where $p_k,q_k\in \mathcal P_k$. We define $W_{K,Q}$ on $U$ as in (\ref{wkqdef}).

\begin{proposition} \label{bwest} Let $K\subset \C$ be closed and let $f$ be holomorphic on a neighborhood $U$ of $K$. Suppose $Q$ is an $f-$admissible weight on $K$. Let 
\begin{equation}\label{stheta} S=\{z\in K: W_{K,Q}^*(z)\geq Q(z)\}.\end{equation} 
For all $h_k\in \mathcal F_k$, we have
     \begin{equation}\label{BWQmod}
|h_k(z)e^{-kQ(z)}|\leq ||h_ke^{-kQ}||_S \cdot e^{k[W_{K,Q}^*(z)-Q(z)]} \ \hbox{for} \ z\in K.
\end{equation}
\end{proposition}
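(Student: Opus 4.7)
The plan is two-fold: first, to extend the Bernstein--Walsh estimate (\ref{BWQ}) to the closed (possibly unbounded) setting; second, to upgrade the $K$-norm on the right-hand side to the $S$-norm via a compactness argument resting on $f$-admissibility.

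For the first step, fix $h_k=p_k(z)q_k(f(z))\in\mathcal F_k$. The elementary bounds $|p_k(z)|\leq C_1(1+|z|^2)^{k/2}$ and $|q_k(f(z))|\leq C_2(1+|f(z)|^2)^{k/2}$ combined with the definition of $\psi$ in Definition~\ref{admit} yield the pointwise decay
$$|h_k(z)|e^{-kQ(z)}\leq C_{h_k}\,e^{-k\psi(z)},\qquad z\in K.$$
Since $\psi\to\infty$ on $K$ by $f$-admissibility, this shows $\|h_ke^{-kQ}\|_K<\infty$ and that $|h_k|e^{-kQ}$ tends to zero at infinity along $K$. For $M$ large, $K_M:=\{z\in K:Q(z)\leq M\}$ is a nonpolar compact subset of $K$; any $h_k$ with $\|h_ke^{-kQ}\|_K\leq 1$ a fortiori satisfies $\|h_ke^{-kQ}\|_{K_M}\leq 1$, and the compact-case Bernstein--Walsh estimate (\ref{BWQ}) applied on $K_M$ bounds $\frac{1}{k}\log|h_k|$ uniformly by $W^*_{K_M,Q}$ on $U$. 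Consequently the family defining $W_{K,Q}$ is locally bounded above on $U$, $W^*_{K,Q}$ is subharmonic on $U$, and normalizing $h_k$ by its $K$-norm gives the closed-$K$ Bernstein--Walsh estimate
\begin{equation*}
|h_k(z)|e^{-kQ(z)}\leq \|h_ke^{-kQ}\|_K\cdot e^{k[W^*_{K,Q}(z)-Q(z)]},\qquad z\in K. \tag{$\ast$}
\end{equation*}

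For the second step, the function $|h_k|e^{-kQ}$ is upper semicontinuous on $K$ (product of the continuous $|h_k|$ with the upper semicontinuous $e^{-kQ}$) and tends to zero at infinity along $K$ by the decay estimate, so its supremum over $K$ is attained at some $z^\star\in K$. Applying $(\ast)$ at $z^\star$,
$$\|h_ke^{-kQ}\|_K=|h_k(z^\star)|e^{-kQ(z^\star)}\leq \|h_ke^{-kQ}\|_K\cdot e^{k[W^*_{K,Q}(z^\star)-Q(z^\star)]},$$
which forces $W^*_{K,Q}(z^\star)\geq Q(z^\star)$, i.e., $z^\star\in S$. Therefore $\|h_ke^{-kQ}\|_K\leq \|h_ke^{-kQ}\|_S$, and with the trivial reverse inequality the two norms coincide. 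Substituting into $(\ast)$ yields (\ref{BWQmod}).

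The main obstacle is the first step: ensuring that $W^*_{K,Q}$ is well-defined (finite and subharmonic) on $U$ in the unbounded setting. This hinges on linking $W_{K,Q}$ to its compact analogues $W_{K_M,Q}$ and on exploiting the uniform decay of $|h_k|e^{-kQ}$ at infinity supplied by the $f$-admissibility hypothesis; once $(\ast)$ is in hand, the attainment argument that localizes the extremal point to $S$ is essentially automatic.
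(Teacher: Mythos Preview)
Your argument is correct and shares the paper's core idea: for normalized $h_k$, the bound $|h_k(z)e^{-kQ(z)}|\le e^{k[W^*_{K,Q}(z)-Q(z)]}$ forces the $K$-supremum to be realized on $S$, so $\|h_ke^{-kQ}\|_K=\|h_ke^{-kQ}\|_S$. There are two points of contrast worth noting.

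First, your entire ``first step'' is unnecessary for the proposition as stated. The inequality $(\ast)$ follows tautologically from the definition of $W_{K,Q}$ as a supremum (and $W_{K,Q}\le W^*_{K,Q}$), with no need to know that $W^*_{K,Q}$ is finite or subharmonic: if $W^*_{K,Q}(z)=+\infty$ the estimate is vacuous. The paper proceeds directly from the definition and defers the finiteness/subharmonicity discussion to Remark~\ref{unbrem}, where extra hypotheses (of the type in Remark~\ref{IMP}) are invoked. What you \emph{do} need from your first step is the decay $|h_k|e^{-kQ}\to 0$ at infinity along $K$, and that you correctly extract from $f$-admissibility.

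Second, your attainment argument is in fact more careful than the paper's. The paper simply asserts that pointwise strict inequality $|h_k(z)e^{-kQ(z)}|<1$ on $K\setminus S$ implies $\|h_ke^{-kQ}\|_K=\|h_ke^{-kQ}\|_S$; but pointwise $<1$ alone does not rule out $\sup_{K\setminus S}=1$. Your observation that $|h_k|e^{-kQ}$ is upper semicontinuous and vanishes at infinity supplies the missing compactness that guarantees the supremum is attained, after which locating the maximizer in $S$ via $(\ast)$ is immediate. So your route is a slightly more explicit version of the same proof, trading a tacit step for an overt one.
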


\begin{proof} Since by definition
$$|h_k(z)|\leq e^{kW_{K,Q}^*(z)}, \ z \in U$$
for $h_k\in \mathcal F_k$ with $||h_ke^{-kQ}||_K=1$, for such $h_k$, 
  \begin{equation}\label{BWQmod2} |h_k(z)e^{-kQ(z)}|\leq e^{k[W_{K,Q}^*(z)-Q(z)]}, \ z\in K.\end{equation}
For $z \in K\setminus S$, from (\ref{BWQmod2}) we have $|h_k(z)e^{-kQ(z)}|<1$ for such $h_k$; hence
$$||h_ke^{-kQ}||_K =  ||h_ke^{-kQ}||_S=1.$$
Inserting this into the right-hand-side of (\ref{BWQmod2}) we have (\ref{BWQmod}) for $h_k\in \mathcal F_k$ normalized so that $||h_ke^{-kQ}||_K=1$. Then (\ref{BWQmod}) follows for all $h_k\in \mathcal F_k$ by normalizing $h_k$. 
\end{proof}

\begin{remark}\label{unbrem} Letting $K_R:=K\cap \{|z|\leq R\}$, if we assume for some $R$ sufficiently large that we have both  $\{z\in K_R: Q(z)<\infty\}$ is nonpolar and $f(K_R) \subset U$, then for such $R$, taking a bounded neighborhood $D_A\subset U$ of the compact set $K_R \cup f(K_R)$, we conclude by the compact case that there is a $M>0$ such that 
  \begin{equation}\label{wkqest}W_{K,Q}^*(z)\leq W_{K_R,Q|_{K_R}}^*(z)\leq 2M+V_{D_A}(z) +V_{D_A}(f(z)), \ z\in U. \end{equation}
Since $K\subset U$, this estimate together with $f-$admissibility of $Q$ imply that the set $S$ in (\ref{stheta}) is compact. In particular, this holds for $f$ as in the two cases described in Remark \ref{IMP}.
\end{remark}

\section{Probabilistic results in compact case} 

In this section, we work with $K$ a compact, nonpolar subset of $\C$ satisfying (\ref{kcond2}) for a fixed $f$ holomorphic on $U$. In this setting, we let $\nu$ be a measure on $K$ with $\nu(K)<\infty$. Fix $Q\in \mathcal A(K)$. Define
\begin{equation}\label{def-Zk}
Z_k:=\int_{K^{k+1}} |VDM_k^Q(z_0,...,z_{k})| d\nu(z_0) \cdots d\nu(z_{k})
\end{equation}
$$=\int_{K^{k+1}} |VDM(z_0,...,z_k)|
e^{-k[Q(z_0)+\cdots + Q(z_k)]}|VDM(f(z_0),...,f(z_k))|
d\nu (z_0)\cdots d\nu (z_k)$$
(recall (\ref{fvdm})). A Bernstein-Markov property (\ref{BMWtype2}) for $\nu$ gives asymptotics of $\{Z_k\}$. 

\begin{proposition} \label{biorthfree} Let $K\subset \C$ be a compact, nonpolar set satisfying (\ref{kcond2}). Suppose $Q\in \mathcal A(K)$ and $\nu$ is a measure on $K$ with $\nu(K)<\infty$ satisfying (\ref{BMWtype2}). Then
\begin{equation}\label{7to4}\lim_{k\to \infty} Z_k^{2/k(k+1)}= \delta^Q(K)=\exp{(-E^Q(\mu_{K,Q}))}.\end{equation}

\end{proposition}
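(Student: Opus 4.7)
The plan is the standard two-sided squeeze: an easy upper bound using the definition of $\delta_k^Q(K)$, together with a lower bound extracted from the Bernstein-Markov property by integrating against $\nu$ in each of the $k+1$ variables separately.

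For the upper bound, I will use that $|VDM_k^Q(z_0,\ldots,z_k)| \leq \delta_k^Q(K)^{k(k+1)/2}$ pointwise on $K^{k+1}$, whence
\begin{equation*}
Z_k \;\leq\; \delta_k^Q(K)^{k(k+1)/2}\,\nu(K)^{k+1},
\end{equation*}
so taking the $(2/k(k+1))$-th root and letting $k\to\infty$ yields $\limsup_k Z_k^{2/k(k+1)} \leq \delta^Q(K) = \exp(-E^Q(\mu_{K,Q}))$ by Theorem \ref{sec3}(2).

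For the lower bound, the key observation is that with all but one variable $z_j$ fixed, $|VDM_k^Q(z_0,\ldots,z_k)|$ becomes $c\cdot|h_k(z_j)|e^{-kQ(z_j)}$ for some $h_k\in\mathcal F_k$ (taking $g(z)=z$, using that $(z_j-z_i)$ gives a polynomial of degree $k$ in $z_j$ and $(f(z_j)-f(z_i))$ gives a polynomial of degree $k$ in $f(z_j)$). Since $K$ satisfies (\ref{kcond2}) and $\nu$ satisfies the Bernstein-Markov inequality (\ref{BMWtype2}), for any $\epsilon>0$ we get, for each $j$ and each fixed choice of the other variables,
\begin{equation*}
\max_{z_j\in K}|VDM_k^Q(z_0,\ldots,z_k)| \;\leq\; Ce^{\epsilon k}\!\int_K |VDM_k^Q(z_0,\ldots,z_k)|\,d\nu(z_j).
\end{equation*}

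Now I apply this bound iteratively, starting from a tuple $(z_0^*,\ldots,z_k^*)$ of weighted Fekete points where $|VDM_k^Q|$ attains its maximum $\delta_k^Q(K)^{k(k+1)/2}$. At each step I replace one starred coordinate by a $\nu$-integration, using the Bernstein-Markov inequality under the integral sign; after $k+1$ such steps,
\begin{equation*}
\delta_k^Q(K)^{k(k+1)/2} \;\leq\; (Ce^{\epsilon k})^{k+1}\,Z_k,
\end{equation*}
so that $\delta_k^Q(K) \leq C^{2/k}e^{2\epsilon(k+1)/k}\,Z_k^{2/k(k+1)}$. Letting $k\to\infty$ and then $\epsilon\to 0$ gives $\delta^Q(K) \leq \liminf_k Z_k^{2/k(k+1)}$, which combined with the upper bound yields (\ref{7to4}).

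The only point requiring real care is justifying the iterated application of Bernstein-Markov: at each step I must confirm that the partially integrated function is still pointwise bounded by the Bernstein-Markov maximum in the next variable (so one does not lose measurability, and Fubini applies), and that the same constants $C,\epsilon$ work uniformly across all $k+1$ iterations since the class $\mathcal F_k$ is closed under this slot-by-slot reinterpretation. Once this is in place, the accumulation of a factor $Ce^{\epsilon k}$ at each of the $k+1$ integrations produces exactly the factor $(Ce^{\epsilon k})^{k+1}$ whose $(2/k(k+1))$-th root tends to $e^{2\epsilon}$, which is absorbed as $\epsilon\to 0$.
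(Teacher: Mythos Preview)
Your proposal is correct and follows essentially the same approach as the paper: the trivial pointwise bound for the upper estimate, and the iterated application of the Bernstein-Markov inequality (\ref{BMWtype2}) starting from a weighted Fekete tuple for the lower estimate, accumulating the factor $(Ce^{\epsilon k})^{k+1}$. The paper makes explicit the intermediate step $|VDM_k^Q(t,a_1,\ldots,a_k)|\leq \max_{s\in K}|VDM_k^Q(t,s,a_2,\ldots,a_k)|$ before each application of (\ref{BMWtype2}), which you allude to in your final paragraph; otherwise the arguments are identical.
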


\begin{proof} Since 
$$Z_k\leq \bigl(\max_{z_0,...,z_k\in K} |VDM_k^Q(z_0,...,z_{k})|\bigr) \cdot \nu(K)^{k+1},$$ 
the upper bound
$$\limsup_{k\to \infty} Z_k^{2/k(k+1)}\leq \delta^Q(K)=\exp{(-E^Q(\mu_{K,Q}))}$$
 follows from part 2. of Theorem \ref{sec3}. To prove the lower bound
$$\liminf_{k\to \infty} Z_k^{2/k(k+1)}\geq \delta^Q(K)=\exp{(-E^Q(\mu_{K,Q}))},$$
fix $\epsilon >0$ and a set of weighted Fekete points $(a_0,...,a_k)$  of order $k$ for $K, Q$. Writing
$$|VDM_k^Q(a_0,...,a_k)|=\prod_{i<j}|a_i-a_j|\cdot \prod_{i<j}|f(a_i)-f(a_j)|\cdot \exp \Big(-k[Q(a_0)+\cdots + Q(a_k)]\Big),$$
we recall from the beginning of section 4 that
$$h_k(t):=VDM_k(t,a_1,...,a_k)\cdot VDM_k(f(t),f(a_1),...,f(a_k))\cdot \exp \Big(-k[Q(a_1)+\cdots + Q(a_k)]\Big)$$
$$=VDM_k^Q(t,a_1,...,a_k)=p_k(t)q_k(f(t))\in \mathcal F_k$$
as in (\ref{fn}) with $g(z)=z$ since $p_k, q_k$ are polynomials of degree at most $k$. Then  
$$h_k(t)\exp {\bigl(-kQ(t)\bigr)}$$
attains its maximum modulus on $K$ at $t=f_0$. Applying the weighted Bernstein-Markov type inequality (\ref{BMWtype2}) gives
\begin{equation}\label{forv}|VDM_k^Q(a_0,...,a_k)|\leq Ce^{\epsilon k} \int_K |VDM_k^Q(t,a_1,...,a_k)|d\nu(t).\end{equation}
Now for each fixed $t\in K$, we consider $\tilde h_k(s):=VDM_k^Q(t,s,a_2,...,a_k)\in \mathcal F_k$. Then
$$|VDM_k^Q(t,a_1,...,a_k)|=|\tilde h_k(a_1)|\leq \max_{s\in K} |\tilde h_k(s)|$$
and we apply (\ref{BMWtype2}) in the right-hand-side integral in (\ref{forv}). Continuing this process in each variable, and using (\ref{awfp}) for weighted Fekete points, we obtain the lower bound.
\end{proof}

Given $K\subset \C$ compact, $Q\in \mathcal A(K)$, and a measure $\nu$ on $K$, we define a probability measure $Prob_k$ on $K^{k+1}$: for a Borel set $A\subset K^{k+1}$,
\begin{equation}\label{probk}Prob_k(A):=\frac{1}{Z_k}\cdot \int_A  |VDM_k^Q({\bf X_k})|  d\nu({\bf X_k})
\end{equation}
where ${\bf X_k}=(x_0,...,x_k)$ and $d\nu({\bf X_k})=d\nu(x_0)\cdots d\nu(x_k)$.
Directly from (\ref{7to4}) and (\ref{probk}) we obtain the following estimate.

\begin{corollary} \label{johansson} 
Let $K\subset \C$ be a compact, nonpolar set satisfying (\ref{kcond2}). For  $Q\in \mathcal A(K)$ and $\nu$ a finite measure on $K$ satisfying (\ref{7to4}), given $\eta >0$, define
\begin{equation}\label{def-setA}
 A_{k,\eta}:=\{{\bf X_k}\in K^{k+1}: |VDM_k^Q({\bf X_k})| \geq 
 ( \delta^Q(K) -\eta)^{k(k+1)/2}\}.
\end{equation}
Then there exists $k^*=k^*(\eta)$ such that for all $k>k^*$, 
$$Prob_k(K^{k+1}\setminus A_{k,\eta})\leq 
\Big(1-{\eta}/(2\delta^{Q}(K)) \Big)^{k(k+1)/2}\nu(K^{k+1}). 
$$
\end{corollary}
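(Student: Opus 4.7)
The proof proposal is essentially a one-line Chebyshev/Markov-style estimate combined with the asymptotics of $Z_k$ from Proposition \ref{biorthfree}, so I will keep it brief.

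First I would use the definition (\ref{probk}) of $Prob_k$ and bound the integrand pointwise on the complement $K^{k+1}\setminus A_{k,\eta}$: by definition of $A_{k,\eta}$, on the complement we have $|VDM_k^Q({\bf X_k})| < (\delta^Q(K)-\eta)^{k(k+1)/2}$, and therefore
$$Prob_k(K^{k+1}\setminus A_{k,\eta}) \leq \frac{(\delta^Q(K)-\eta)^{k(k+1)/2}\,\nu(K^{k+1})}{Z_k}.$$
This is the only use of the defining inequality for $A_{k,\eta}$.

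Next I would invoke Proposition \ref{biorthfree}, which gives $Z_k^{2/k(k+1)} \to \delta^Q(K)$. Hence for $\eta>0$ fixed there exists $k^*=k^*(\eta)$ such that for all $k>k^*$,
$$Z_k \geq \bigl(\delta^Q(K)-\eta/2\bigr)^{k(k+1)/2}.$$
Inserting this lower bound into the previous estimate gives
$$Prob_k(K^{k+1}\setminus A_{k,\eta}) \leq \left(\frac{\delta^Q(K)-\eta}{\delta^Q(K)-\eta/2}\right)^{k(k+1)/2}\nu(K^{k+1}).$$

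Finally, I would compare the base of the exponent with the claimed bound. Since $\delta^Q(K)-\eta/2 \leq \delta^Q(K)$, we have
$$\frac{\delta^Q(K)-\eta}{\delta^Q(K)-\eta/2} = 1-\frac{\eta/2}{\delta^Q(K)-\eta/2} \leq 1-\frac{\eta/2}{\delta^Q(K)} = 1-\frac{\eta}{2\delta^Q(K)},$$
which yields the stated inequality. No part of this is really an obstacle — the only content is the lower bound on $Z_k$ supplied by Proposition \ref{biorthfree}, everything else is bookkeeping with the elementary inequality in the last display.
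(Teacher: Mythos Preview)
Your proof is correct and is exactly the argument the paper intends: the text merely says the corollary follows ``directly from (\ref{7to4}) and (\ref{probk})'', and your Markov-type bound on the complement of $A_{k,\eta}$ together with the lower bound $Z_k\geq(\delta^Q(K)-\eta/2)^{k(k+1)/2}$ for large $k$ spells this out precisely. One cosmetic point: the hypothesis of the corollary is that $\nu$ satisfies (\ref{7to4}) itself, so you should cite that assumption directly rather than Proposition \ref{biorthfree}.
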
	
	
	We get the induced product probability measure ${\bf P}$ on the space of arrays on $K$, 
	$$\chi:=\{X=\{{\bf X_{k}}\in K^{k}\}_{k\geq 1}\},$$ 
	namely,
	$$(\chi,{\bf P}):=\prod_{k=1}^{\infty}(K^{k+1},Prob_k).$$
As an immediate consequence of Corollary \ref{johansson}, the Borel-Cantelli lemma, and 3. of Theorem \ref{sec3}, we obtain: 

\begin{corollary}\label{416} Let $Q\in \mathcal A(K)$ and $\nu$ a finite measure on $K$ satisfying (\ref{7to4}). For ${\bf P}$-a.e. array $X=\{x_j^{(k)}\}_{j=0,...,k; \ k=2,3,...}\in \chi$, 
$$
\frac{1}{k+1}\sum_{j=0}^k\delta_{x_j^{(k)}}\to \mu_{K,Q} \ \hbox{weakly as }
k\to\infty.$$
\end{corollary}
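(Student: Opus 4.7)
The plan is to combine the probabilistic tail estimate of Corollary \ref{johansson} with the Borel--Cantelli lemma, then invoke part 3 of Theorem \ref{sec3}, which characterizes asymptotic weighted Fekete arrays by weak convergence of their empirical measures to $\mu_{K,Q}$.

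First I would fix $\eta>0$ and set $c:=\eta/(2\delta^Q(K))\in(0,1)$. Corollary \ref{johansson} gives, for all sufficiently large $k$,
$$Prob_k(K^{k+1}\setminus A_{k,\eta})\leq (1-c)^{k(k+1)/2}\,\nu(K)^{k+1}.$$
Since the exponent $k(k+1)/2$ grows quadratically while $(k+1)\log\nu(K)$ grows only linearly, the right-hand side is summable in $k$. Applying the Borel--Cantelli lemma to the product space $(\chi,{\bf P})$, the event
$$B_\eta:=\{X\in\chi:{\bf X_k}\in A_{k,\eta}\text{ for all sufficiently large }k\}$$
satisfies ${\bf P}(B_\eta)=1$.

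Next I would exhaust $\eta$ by a countable sequence, taking $E:=\bigcap_{m\geq 1}B_{1/m}$, so that ${\bf P}(E)=1$. For any $X\in E$ and any $m$, eventually
$$|VDM_k^Q({\bf X_k})|^{2/(k(k+1))}\geq \delta^Q(K)-1/m,$$
so $\liminf_{k\to\infty}|VDM_k^Q({\bf X_k})|^{2/(k(k+1))}\geq \delta^Q(K)$. The reverse inequality is automatic, since $|VDM_k^Q({\bf X_k})|^{2/(k(k+1))}\leq \delta_k^Q(K)^{1}$ and $\delta_k^Q(K)\to \delta^Q(K)=\exp(-E^Q(\mu_{K,Q}))$ by part 2 of Theorem \ref{sec3}. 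Consequently every $X\in E$ satisfies the defining identity (\ref{awfp}) of an asymptotic weighted Fekete array.

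Finally, part 3 of Theorem \ref{sec3} applied array-by-array yields $\frac{1}{k+1}\sum_{j=0}^k\delta_{x_j^{(k)}}\to\mu_{K,Q}$ weakly for each $X\in E$, which is the conclusion. I do not anticipate a real obstacle: the probabilistic work has already been done in Corollary \ref{johansson}, so the only ingredients needed are the summability check (which is comfortable, thanks to the quadratic $k(k+1)/2$ exponent dominating the linear contribution of $\nu(K)^{k+1}$) and the standard device of taking $\eta_m=1/m$ to pass from a single $\eta$ to a liminf statement that combines with the deterministic upper bound from Theorem \ref{sec3}.
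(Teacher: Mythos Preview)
Your proposal is correct and follows exactly the route the paper indicates: it states the corollary as ``an immediate consequence of Corollary \ref{johansson}, the Borel--Cantelli lemma, and 3.\ of Theorem \ref{sec3},'' and you have filled in precisely those steps, including the countable intersection over $\eta=1/m$ and the deterministic upper bound from part 2 of Theorem \ref{sec3} to conclude (\ref{awfp}). One small caveat worth noting explicitly is that $c=\eta/(2\delta^Q(K))\in(0,1)$ and $\delta^Q(K)-\eta>0$ only hold for $\eta$ sufficiently small, but since you ultimately take $\eta=1/m$ this is harmless.
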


We remark that 
${\mathcal M}(K)$, with the weak topology, is a Polish space; i.e., a separable, complete metrizable space. A neighborhood basis 
of $\mu \in {\mathcal M}(K)$ is given by sets of the form 
$$G(\mu, k, \epsilon) := \{\sigma  \in {\mathcal M}(K):
|\int_K x^ay^b (d\mu(z) - d\sigma(z) )| < \epsilon$$
$$\hbox{for} \ 0\leq a+b \leq k\} \ \hbox{where} \ z=x+iy.$$

We have all of the ingredients needed to follow the arguments of section 6 of \cite{LPLD} to prove the analogue of Theorem 6.6 there and hence a large deviation principle (Definition \ref{equivform} and Theorem \ref{ldp} below) which quantifies the statement of ${\bf P}$-a.e. convergence for arrays $X=\{x_j^{(k)}\}$ of $\frac{1}{k+1}\sum_{j=0}^k\delta_{x_j^{(k)}}$ to $\mu_{K,Q}$.  Given $G\subset {\mathcal M}(K)$, for each $k=1,2,...$ we let 
\begin{equation}\label{nbhddef}
\tilde G_k:=\{{\bf a} =(a_{0},...,a_{k})\in K^{k+1},~
\frac{1}{k+1}\sum_{j=0}^{{k}} \delta_{a_{j}}\in G\},
\end{equation}
and set
\begin{equation}\label{jkqmu}
J^Q_k(G):=\Big[\int_{\tilde G_{k}}|VDM^Q_k({\bf a})|d\nu ({\bf a})\Big]
^{2/k(k+1)}.
\end{equation}
\begin{definition} \label{jwmuq} For $\mu \in \mathcal M(K)$ we define
$$\overline J^Q(\mu):=\inf_{G \ni \mu} \overline J^Q(G) \ \hbox{where} \ \overline J^Q(G):=\limsup_{k\to \infty} J^Q_k(G);$$
$$\underline J^Q(\mu):=\inf_{G \ni \mu} \underline J^Q(G) \ \hbox{where} \ \underline J^Q(G):=\liminf_{k\to \infty} J^Q_k(G)$$
where the infimum is taken over all open neighborhoods $G\subset \mathcal M(K)$ of $\mu$. If $Q=0$ we simply write $\overline J(\mu), \underline  J(\mu)$.
\end{definition}

Following the steps in section 6 of \cite{LPLD} with Corollary 5.3 there replaced by our approximation result, Lemma \ref{lemma-scal}, we obtain equality of the $ \overline J^{Q}$ and $ \underline J^{Q}$ functionals for any admissible weight $Q$ provided $\nu$ is a strong Bernstein-Markov measure for $\mathcal F_k$ on $K$ (see Theorem 6.6 in \cite{LPLD}). 

 \begin{theorem} \label{rel-J-E} 
 Let $K\subset \C$ be a compact, nonpolar set satisfying (\ref{kcond2}). Let $\nu\in {\mathcal M}(K)$ be a strong Bernstein-Markov measure for ${\mathcal F}_k$ on $K$ (e.g., if $\nu$ satisfies a mass density condition (\ref{md}) and $K$ is strongly regular). \\
 (i) For any $\mu\in \mathcal M(K)$, 
$$
 \log \overline J(\mu)= \log\underline  J(\mu)=-I(\mu)-I(f_*\mu).$$
(ii) Let $Q\in \mathcal A(K)$. Then
$$
 \overline J^{Q}(\mu)=\overline J(\mu)\cdot e^{-2\int_K Qd\mu}
$$
(and with the $\underline J,\underline J^Q$ 
functionals as well) so that,
 \begin{equation}\label{minwtd}\log \overline J^Q(\mu)= \log \underline J^Q(\mu)=-E^{Q}(\mu).
\end{equation}
\end{theorem}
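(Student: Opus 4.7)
The plan is to adapt the proof of Theorem 6.6 of \cite{LPLD} to the present $\mathcal F_k$-framework, relying on Theorem \ref{sec3}(1) for the upper bound, Proposition \ref{biorthfree} (whose hypothesis is ensured by the strong Bernstein-Markov property of $\nu$, cf.\ Theorem \ref{BM}) for the lower bound in the special case $\mu = \mu_{K,Q'}$ with $Q'$ continuous, and Lemma \ref{lemma-scal} to reduce the general case. It suffices to prove $\log \overline J^Q(\mu) = \log \underline J^Q(\mu) = -E^Q(\mu)$ directly: then (i) is the $Q=0$ case, while the factorization $\overline J^Q(\mu) = \overline J(\mu)\, e^{-2\int Q\, d\mu}$ in (ii) drops out by writing $|VDM_k^Q({\bf a})| = |VDM_k^0({\bf a})|\exp(-k(k+1)\int Q\, d\mu_k({\bf a}))$ with $\mu_k({\bf a}) = \frac{1}{k+1}\sum_j \delta_{a_j}$, and using weak continuity of $\sigma \mapsto \int Q\, d\sigma$ (for continuous $Q$) on small neighborhoods of $\mu$.

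For the upper bound, I would fix an open $G \ni \mu$. For any ${\bf a} \in \tilde G_k$ the empirical measure $\mu_k({\bf a})$ lies in $G$, so any weak subsequential limit lies in $\overline G$. Combining Theorem \ref{sec3}(1) with weak-$*$ compactness of $\mathcal M(K)$ yields $\limsup_k \sup_{{\bf a} \in \tilde G_k} |VDM_k^Q({\bf a})|^{2/k(k+1)} \leq \sup_{\tilde\mu \in \overline G} e^{-E^Q(\tilde\mu)}$. The crude bound $J_k^Q(G)^{k(k+1)/2} \leq \nu(K)^{k+1}\sup_{{\bf a} \in \tilde G_k}|VDM_k^Q({\bf a})|$ then gives $\overline J^Q(G) \leq \sup_{\tilde\mu \in \overline G} e^{-E^Q(\tilde\mu)}$. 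Taking $\inf_{G \ni \mu}$ and invoking lower semicontinuity of $E^Q$ (inherited from LSC of the kernel in (\ref{def-EQ})) completes $\log \overline J^Q(\mu) \leq -E^Q(\mu)$.

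For the lower bound, treat first the special case $\mu = \mu_{K,Q'}$ for continuous $Q'$. Proposition \ref{biorthfree} applied to the pair $(K,Q')$ gives $Z_k(Q')^{2/k(k+1)} \to e^{-E^{Q'}(\mu_{K,Q'})}$. Applying the upper-bound argument to the weak-$*$ compact complement $\mathcal M(K) \setminus G$, and using LSC of $E^{Q'}$ together with uniqueness of the minimizer to obtain $\gamma := \inf_{\tilde\mu \notin G} E^{Q'}(\tilde\mu) - E^{Q'}(\mu_{K,Q'}) > 0$, the contribution to $Z_k(Q')$ from ${\bf a} \in K^{k+1}\setminus\tilde G_k$ is bounded above by $\nu(K)^{k+1}\, e^{-(E^{Q'}(\mu_{K,Q'}) + \gamma/2)k(k+1)/2}$, exponentially smaller than $Z_k(Q')$ itself; hence $\int_{\tilde G_k}|VDM_k^{Q'}|\, d\nu \geq (1-o(1))\, Z_k(Q')$. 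To pass from weight $Q'$ to weight $Q$, write $|VDM_k^Q({\bf a})| = |VDM_k^{Q'}({\bf a})|\exp(-k(k+1)\int (Q-Q')\, d\mu_k({\bf a}))$ and use weak continuity of $\sigma \mapsto \int(Q-Q')\, d\sigma$ (for $Q,Q'$ continuous) on $G$ small: this gives $\log \underline J^Q(\mu_{K,Q'}) \geq -2\int(Q-Q')\, d\mu_{K,Q'} - E^{Q'}(\mu_{K,Q'}) = -E^Q(\mu_{K,Q'})$.

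For a general $\mu \in \mathcal M(K)$ with $E^Q(\mu) < \infty$, I would apply Lemma \ref{lemma-scal} to obtain $\mu_m = \mu_{K,Q_m}$ with $Q_m \in C(K)$, $\mu_m \to \mu$ weakly, $I(\mu_m) \to I(\mu)$, and $I(f_*\mu_m) \to I(f_*\mu)$. Since $\mu_m = \mu|_{K_m}/\mu(K_m)$ with $K_m \uparrow$ and $\mu(K \setminus K_m) \to 0$, and since $Q$ is bounded below on compact $K$, monotone convergence gives $\int Q\, d\mu_m \to \int Q\, d\mu$, so $E^Q(\mu_m) \to E^Q(\mu)$. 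For any open $G \ni \mu$, $\mu_m \in G$ eventually; choosing a sub-neighborhood $G' \subset G$ of $\mu_m$ and applying the special case yields $\underline J^Q(G) \geq \underline J^Q(G') \geq e^{-E^Q(\mu_m)}$, and letting $m \to \infty$ then $\inf_{G \ni \mu}$ gives $\log \underline J^Q(\mu) \geq -E^Q(\mu)$. The hardest step is the concentration estimate in paragraph 3 (that the mass of $Z_k(Q')$ is carried by $\tilde G_k$), which couples Theorem \ref{sec3}(1), Proposition \ref{biorthfree}, weak-$*$ compactness of $\mathcal M(K)$, and the strict uniqueness of the weighted equilibrium measure from Proposition \ref{exun}.
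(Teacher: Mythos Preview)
Your outline is essentially the paper's argument: the paper defers entirely to Section~6 of \cite{LPLD} with Lemma~\ref{lemma-scal} replacing the approximation lemma there, and you have correctly reconstructed those steps (upper bound via lower semicontinuity of $E^Q$ and weak-$*$ compactness; concentration of $Z_k(Q')$ near $\mu_{K,Q'}$ via Theorem~\ref{sec3}(1), Proposition~\ref{biorthfree}, and uniqueness from Proposition~\ref{exun}; reduction to equilibrium measures via Lemma~\ref{lemma-scal}).

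One point needs tightening. In paragraph~3 you pass from $|VDM_k^{Q'}|$ to $|VDM_k^Q|$ on $\tilde G_k$ using weak continuity of $\sigma\mapsto\int(Q-Q')\,d\sigma$, which (as you note) requires $Q$ continuous; but part~(ii) is stated for $Q\in\mathcal A(K)$, i.e., merely lower semicontinuous, and then $\sigma\mapsto\int Q\,d\sigma$ is only lower semicontinuous, so on $\tilde G_k$ you can bound $\int Q\,d\mu_k$ only from below --- the wrong direction for a lower bound on $|VDM_k^Q|$. The cleanest fix is structural: run paragraphs~3--4 with $Q\equiv 0$ to prove~(i) directly (there the only factorization needed is $|VDM_k^0|=|VDM_k^{Q_m}|\,e^{k(k+1)\int Q_m\,d\mu_k}$ with $Q_m$ continuous, so no issue), and then derive~(\ref{minwtd}) from~(i) plus the factorization in~(ii). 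Lower semicontinuity of $\sigma\mapsto\int Q\,d\sigma$ immediately gives $\overline J^Q(\mu)\le\overline J(\mu)\,e^{-2\int Q\,d\mu}$ and hence $\log\overline J^Q(\mu)\le -E^Q(\mu)$; the reverse inequality $\underline J^Q(\mu)\ge\underline J(\mu)\,e^{-2\int Q\,d\mu}$ for non-continuous $Q$ needs a separate argument that you should either supply or cite from \cite{LPLD}.
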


\noindent Thus we simply write $J,J^Q$ without an underline or overline.

Define 
$j_k:  K^{k+1} \to \mathcal M(K)$ via 
\begin{equation}\label{jk} j_k(x_0,...,x_{k})
=\frac{1}{k+1}\sum_{j=0}^{k} \delta_{x_j}.
\end{equation}
The push-forward
$\sigma_k:=(j_k)_*(Prob_k) $ is a probability measure on $\mathcal M(K)$: for a Borel set $G\subset \mathcal M(K)$,
\begin{equation}\label{sigmak}
\sigma_k(G)=\frac{1}{Z_k} \int_{\tilde G_{k}} |VDM_k^Q(x_0,...,x_{k})| d\nu(x_0) \cdots d\nu(x_{k}).
\end{equation}

\begin{definition} \label{equivform}
The sequence $\{\sigma_k\}$ of probability measures on $\mathcal M(K)$ satisfies a {\bf large deviation principle} (LDP) with good rate function $\mathcal I$ and speed $\{s_k\}$ with $s_k\to \infty$ if for all 
measurable sets $\Gamma\subset \mathcal M(K)$, 
\begin{equation}\label{lowb}-\inf_{\mu \in \Gamma^0}\mathcal I(\mu)\leq \liminf_{k\to \infty} \frac{1}{s_k} \log \sigma_k(\Gamma) \ \hbox{and}\end{equation}
\begin{equation}\label{highb} \limsup_{k\to \infty} \frac{1}{s_k} \log \sigma_k(\Gamma)\leq -\inf_{\mu \in \bar \Gamma}\mathcal I(\mu).\end{equation}
\end{definition}

\noindent We will give an interpretation of our specific LDP (Theorem \ref{ldp}) after its statement. On $\mathcal M(K)$, to prove a LDP it suffices to work with a base for the weak topology. The following is a special case of a basic general existence result, Theorem 4.1.11 in \cite{DZ}.

\begin{proposition} \label{dzprop1} Let $\{\sigma_{\epsilon}\}$ be a family of probability measures on $\mathcal M(K)$. Let $\mathcal B$ be a base for the topology of $\mathcal M(K)$. For $\mu\in \mathcal M(K)$ let
$$\mathcal I(\mu):=-\inf_{\{G \in \mathcal B: \mu \in G\}}\bigl(\liminf_{\epsilon \to 0} \epsilon \log \sigma_{\epsilon}(G)\bigr).$$
Suppose for all $\mu\in \mathcal M(K)$,
$$\mathcal I(\mu):=-\inf_{\{G \in \mathcal B: \mu \in G\}}\bigl(\limsup_{\epsilon \to 0} \epsilon \log \sigma_{\epsilon}(G)\bigr).$$
Then $\{\sigma_{\epsilon}\}$ satisfies a LDP with rate function $\mathcal I(\mu)$ and speed $1/\epsilon$. 
\end{proposition}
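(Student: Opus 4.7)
The plan is to follow the standard neighborhood-based derivation of a large deviation principle (as in Dembo--Zeitouni, Theorem 4.1.11), treating the lower and upper bounds separately and then verifying goodness. The essential topological input will be that $\mathcal M(K)$ is compact in the weak topology (since $K\subset\C$ is compact); this will be used for the upper bound but is not needed for the lower bound.

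For the lower bound (\ref{lowb}), fix a measurable $\Gamma\subset\mathcal M(K)$ and $\mu\in\Gamma^{0}$. Because $\mathcal B$ is a base for the topology, there is $G\in\mathcal B$ with $\mu\in G\subset\Gamma^{0}\subset\Gamma$; monotonicity of $\sigma_{\epsilon}$ then gives
\[\liminf_{\epsilon\to 0}\epsilon\log\sigma_{\epsilon}(\Gamma)\;\geq\;\liminf_{\epsilon\to 0}\epsilon\log\sigma_{\epsilon}(G)\;\geq\;\inf_{\{G'\in\mathcal B:\,\mu\in G'\}}\liminf_{\epsilon\to 0}\epsilon\log\sigma_{\epsilon}(G')\;=\;-\mathcal I(\mu),\]
and taking the supremum over $\mu\in\Gamma^{0}$ yields (\ref{lowb}).

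For the upper bound (\ref{highb}), set $L:=\inf_{\mu\in\bar\Gamma}\mathcal I(\mu)$ and fix $M,\delta>0$. The second (and more delicate) formula for $\mathcal I$ produces, for every $\mu\in\bar\Gamma$, some $G_{\mu}\in\mathcal B$ with $\mu\in G_{\mu}$ and
\[\limsup_{\epsilon\to 0}\epsilon\log\sigma_{\epsilon}(G_{\mu})\;\leq\;-(\mathcal I(\mu)\wedge M)+\delta\;\leq\;-(L\wedge M)+\delta.\]
Since $\bar\Gamma$ is a closed subset of the compact space $\mathcal M(K)$, the cover $\{G_{\mu}\}_{\mu\in\bar\Gamma}$ admits a finite subcover $G_{\mu_{1}},\ldots,G_{\mu_{N}}$. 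Using $\sigma_{\epsilon}(\Gamma)\leq\sum_{i=1}^{N}\sigma_{\epsilon}(G_{\mu_{i}})\leq N\max_{i}\sigma_{\epsilon}(G_{\mu_{i}})$ and $\epsilon\log N\to 0$,
\[\limsup_{\epsilon\to 0}\epsilon\log\sigma_{\epsilon}(\Gamma)\;\leq\;\max_{1\leq i\leq N}\limsup_{\epsilon\to 0}\epsilon\log\sigma_{\epsilon}(G_{\mu_{i}})\;\leq\;-(L\wedge M)+\delta.\]
Sending $\delta\to 0$ and then $M\to\infty$ delivers (\ref{highb}).

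To check goodness it suffices to show $\mathcal I$ is lower semicontinuous, after which compactness of the sublevel set $\{\mathcal I\leq\alpha\}$ is automatic from the compactness of $\mathcal M(K)$. If $\mu_{n}\to\mu$ and $\mathcal I(\mu)>\alpha$, then there is $G\in\mathcal B$ with $\mu\in G$ and $-\liminf_{\epsilon}\epsilon\log\sigma_{\epsilon}(G)>\alpha$; for $n$ large, $\mu_{n}\in G$, which forces $\mathcal I(\mu_{n})\geq -\liminf_{\epsilon}\epsilon\log\sigma_{\epsilon}(G)>\alpha$, contradicting $\mathcal I(\mu_n)\leq\alpha$. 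The only genuine technical wrinkle is the finite-subcover step in the upper bound, which is precisely where one must exploit that the hypothesis on $\mathcal I$ controls $\limsup_{\epsilon}\epsilon\log\sigma_{\epsilon}(G)$ for every basic open $G$ containing a given point, not just for a single fixed neighborhood.
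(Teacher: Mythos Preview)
Your argument is correct and is essentially the standard proof of Theorem 4.1.11 in Dembo--Zeitouni, specialized to the situation where the ambient space $\mathcal M(K)$ is compact (so that the weak upper bound for compact sets becomes the full upper bound for closed sets). The paper itself does not supply a proof of this proposition; it merely records the statement as a special case of that reference, so there is nothing further to compare.
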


Following section 7 of \cite{LPLD}, Theorem \ref{rel-J-E} immediately yields a large deviation principle:

\begin{theorem} \label{ldp} Assume $\nu$ is a strong Bernstein-Markov measure for ${\mathcal F}_k$ on $K$, $Q\in \mathcal A(K)$, and $\nu$ satisfies (\ref{7to4}). The sequence $\{\sigma_k=(j_k)_*(Prob_k)\}$ of probability measures on $\mathcal M(K)$ satisfies a large deviation principle with speed $k^{2}/2$ and good rate function $\mathcal I:=\mathcal I_{K,Q}$ where, for $\mu \in \mathcal M(K)$,
\begin{equation*}
\mathcal I(\mu):=\log J^Q(\mu_{K,Q})-\log J^Q(\mu)=E^Q(\mu)-E^Q(\mu_{K,Q}).
\end{equation*}
 \end{theorem}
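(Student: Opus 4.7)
The plan is to deduce Theorem \ref{ldp} from the general LDP criterion in Proposition \ref{dzprop1}, using the asymptotics of $Z_k$ from Proposition \ref{biorthfree} and the identification of the $J^Q$ functional from Theorem \ref{rel-J-E}. Since $\mathcal{M}(K)$ is a Polish space with a countable base of open neighborhoods around each measure, the criterion applies with $\epsilon_k = 2/k(k+1)$ (or equivalently speed $s_k = k(k+1)/2$, which is asymptotically $k^2/2$ so the limits and rate function are unchanged).

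The first step is to rewrite $\sigma_k(G)$ in product form. From \eqref{sigmak} and \eqref{jkqmu},
\begin{equation*}
\sigma_k(G) \;=\; \frac{1}{Z_k}\int_{\tilde G_k}|VDM_k^Q({\bf a})|\,d\nu({\bf a}) \;=\; \frac{J_k^Q(G)^{k(k+1)/2}}{Z_k},
\end{equation*}
so that
\begin{equation*}
\frac{2}{k(k+1)}\log \sigma_k(G) \;=\; \log J_k^Q(G) \;-\; \frac{2}{k(k+1)}\log Z_k.
\end{equation*}
By Proposition \ref{biorthfree}, the hypothesis \eqref{7to4} gives $\frac{2}{k(k+1)}\log Z_k\to -E^Q(\mu_{K,Q})$. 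Taking liminf and limsup in $k$ and then infimum over open $G\ni\mu$, and using monotonicity of $G\mapsto J_k^Q(G)$ under shrinking neighborhoods, yields
\begin{equation*}
\inf_{G\ni\mu}\liminf_{k\to\infty}\tfrac{2}{k(k+1)}\log \sigma_k(G) \;=\; \log\underline J^Q(\mu)+E^Q(\mu_{K,Q}),
\end{equation*}
and similarly with $\limsup$ and $\overline J^Q(\mu)$. By Theorem \ref{rel-J-E}, $\log \underline J^Q(\mu)=\log \overline J^Q(\mu)=-E^Q(\mu)$, so both quantities equal $E^Q(\mu_{K,Q})-E^Q(\mu)=-\mathcal I(\mu)$. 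Proposition \ref{dzprop1} then delivers the LDP with speed $k(k+1)/2$ (equivalently $k^2/2$) and rate function $\mathcal I$.

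It remains to verify that $\mathcal I$ is a \emph{good} rate function, i.e.\ that its sublevel sets $\{\mathcal I\le \alpha\}$ are compact. Since $K$ is compact, so is $\mathcal M(K)$ in the weak topology, so it suffices to prove lower semicontinuity of $\mathcal I$, or equivalently of $\mu\mapsto E^Q(\mu)$. This follows in the standard way: the kernel $k(x,y)$ defined in \eqref{def-k} is lower semicontinuous on $K\times K$ (using continuity of $f$ and lower semicontinuity of $Q$) and bounded below by $2c$ thanks to \eqref{ineg-k-psi}, so $\mu\mapsto E^Q(\mu)=\iint k\,d\mu\otimes d\mu$ is lower semicontinuous under weak convergence by a standard truncation-and-monotone-convergence argument.

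The step I expect to be the main obstacle, although already carried out in Theorem \ref{rel-J-E}, is the coincidence of $\underline J^Q$ and $\overline J^Q$; this is where the strong Bernstein-Markov hypothesis on $\nu$, together with the approximation Lemma \ref{lemma-scal}, plays its crucial role by allowing one to replace an arbitrary $\mu$ of finite energy by a sequence of measures that are themselves weighted equilibrium measures and for which the asymptotics of the $J^Q_k(G)$ can be computed. Once that identification is in hand, as already recorded in Theorem \ref{rel-J-E}, the LDP falls out as above.
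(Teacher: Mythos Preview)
Your proof is correct and follows precisely the route the paper indicates: apply Proposition \ref{dzprop1} with $\epsilon=2/k(k+1)$, decompose $\frac{2}{k(k+1)}\log\sigma_k(G)=\log J_k^Q(G)-\frac{2}{k(k+1)}\log Z_k$, use \eqref{7to4} for the second term and Theorem \ref{rel-J-E} for the first, then check goodness via lower semicontinuity of $E^Q$ on the compact space $\mathcal M(K)$. The paper itself merely says ``Following section 7 of \cite{LPLD}, Theorem \ref{rel-J-E} immediately yields a large deviation principle,'' so you have written out exactly the argument that was being deferred.
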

 
 \noindent Intuitively, this says the following. Given any $\mu\in \mathcal M(K)$ with $\mu \not = \mu_{K,Q}$, we know the probability that a random array $X=\{x_j^{(k)}\}_{j=0,...,k; \ k=2,3,...}\in \chi$ has the property that 
 $\frac{1}{k+1}\sum_{j=0}^k\delta_{x_j^{(k)}}\to \mu$ is zero; the ``rate'' at which the probabilty that this sequence lies in small neighborhoods of $\mu$ tends to zero as $k\to \infty$ like $\exp{\bigl[-k^{2}/2\cdot \mathcal I(\mu)\bigr]}$.

\section {Some results for $K$ unbounded}

In this section, we let $K$ be closed and unbounded; more specifically, recalling Remark \ref{unbrem}, we take 
\begin{equation}\label{fhyp} K\subset \C \ \hbox{for}  \ f \ \hbox{entire or} \
K\subset (0,\infty) \ \hbox{for} \ f \ \hbox{holomorphic in} \ H \ \hbox{with} \ f(x)>0 \ \hbox{if} \ x\in (0,\infty)\end{equation}
where $H$ is the right half plane. We only consider these two situations. We let $Q$ be an $f-$admissible weight on $K$ as in Definition \ref{admit}: the function
$$\psi(x):= Q(x)-\frac{1}{2}\log {[(1+|x|^2)(1+|f(x)|^2)]}$$ 
satisfies $\liminf_{|x|\to \infty, \ x\in K}\psi(x)=\infty$.

We show that Theorem \ref{sec3} remains valid in this setting. Note that the first part of Theorem \ref{sec3},

\begin{enumerate}
\item if $\{\mu_k=\frac{1}{k+1}\sum_{j=0}^k\delta_{z_j^{(k)}}\}\subset \mathcal M(K)$ converge weakly to $\mu\in \mathcal M(K)$, then 
\begin{equation}\label{upboundVDM}
\limsup_{k\to \infty} |VDM_k^Q(z_0^{(k)},...,z_k^{(k)})|^{2/k(k+1)}\leq \exp{(-E^Q(\mu))}
\end{equation}
\end{enumerate}
follows as in the case where $K$ is compact. In order to verify the validity of the rest of Theorem \ref{sec3} in this situation, recall that Proposition \ref{bwest} gives the weighted Bernstein-Walsh estimate (\ref{BWQmod}). Under (\ref{fhyp}), Remark \ref{unbrem} shows that the set $S$ in (\ref{stheta}) is compact. We will use (\ref{BWQmod}) to show that the sequence of probability measures $\mu_k:=\frac{1}{k+1}\sum_{j=0}^k\delta_{z^{(k)}_{j}}$ associated to an array $\{z^{(k)}_{j}\}_{j=0,...,k; \ k=1,2,...}$ of asymptotically weighted Fekete points for $K,Q$ (see (\ref{awfp}) and Remark \ref{awfr}) are {\it uniformly tight}, i.e., given $\epsilon >0$, there exists a compact set $C_{\epsilon}$ such that $\mu_k(K\setminus C_{\epsilon})<\epsilon$ for all $k$. Indeed, we prove a stronger statement. 

\begin{proposition} \label{tight}  For $K$ and $f$ as in (\ref{fhyp}), let $\{z^{(k)}_{j}\}_{j=0,...,k; \ k=1,2,...}$ be an array of asymptotically weighted Fekete points for $K,Q$ where $Q$ is $f-$admissible. Let $S$ be as in (\ref{stheta}). For any $M>0$ with $S \subset D(0,M)$ and any $\delta >0$, there exists $k_0$ such that for all $k>k_0$,
$$\frac{\# \{j:z^{(k)}_{j}\in D(0,M)\}}{k}> 1-\delta.$$ 

\end{proposition}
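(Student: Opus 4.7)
The strategy is to show that if an asymptotically weighted Fekete array had more than $\delta k$ points outside $D(0,M)$, then one could replace them one at a time by points of $S$ and produce a configuration with a strictly larger weighted Vandermonde, contradicting the asymptotic Fekete hypothesis. The key enabling estimate is the Bernstein--Walsh bound (\ref{BWQmod}) combined with a quantitative uniform gap $Q(z)-W_{K,Q}^*(z)\geq\epsilon_0>0$ on $K\setminus D(0,M)$. To produce such an $\epsilon_0$, I would first observe that $Q-W_{K,Q}^*$ is lower semicontinuous on $K$ ($Q$ is l.s.c., and $W_{K,Q}^*$ is subharmonic on the neighborhood $U$ of $K$, hence u.s.c.) and strictly positive on $K\setminus D(0,M)$ since $S\subset D(0,M)$. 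Combining the growth estimate (\ref{wkqest}) from Remark \ref{unbrem}, which gives $W_{K,Q}^*(z)\leq \log|z|+\log^+|f(z)|+O(1)$, with the $f$-admissibility of $Q$, one obtains $Q(z)-W_{K,Q}^*(z)\to+\infty$ as $|z|\to\infty$ in $K$; consequently the difference exceeds $1$ outside some $\overline{D(0,R)}$ with $R>M$, and on the compact set $K\cap (\overline{D(0,R)}\setminus D(0,M))$ the l.s.c.\ positive function attains a positive infimum, yielding the desired $\epsilon_0$.

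\textbf{The swap argument.} Fix $k$ and let $J_k:=\{j:z_j^{(k)}\notin D(0,M)\}$. For each $j\in J_k$ define
\[ h_{k,j}(z):=\prod_{i\neq j}(z-z_i^{(k)})\cdot\prod_{i\neq j}(f(z)-f(z_i^{(k)}))\in \mathcal{F}_k, \]
so that, viewed as a function of the single variable $z$, $|VDM_k^Q(z_0^{(k)},\ldots,z,\ldots,z_k^{(k)})|=|h_{k,j}(z)|\,e^{-kQ(z)}\cdot C_{k,j}$ for a constant $C_{k,j}>0$ independent of $z$. Proposition \ref{bwest} applied at $z=z_j^{(k)}\notin D(0,M)$ together with the uniform gap above yield
\[ |h_{k,j}(z_j^{(k)})|\,e^{-kQ(z_j^{(k)})}\leq ||h_{k,j}\,e^{-kQ}||_S\cdot e^{-k\epsilon_0}. \]
Since $S$ is compact (Remark \ref{unbrem}) and $|h_{k,j}|e^{-kQ}$ is u.s.c., the supremum on the right is attained at some $z_j^*\in S$; replacing $z_j^{(k)}$ by $z_j^*$ in the array thus multiplies $|VDM_k^Q|$ by a factor of at least $e^{k\epsilon_0}$. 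Carrying out this swap successively for each $j\in J_k$ --- each swap leaves the remaining bad points outside $D(0,M)$, so the uniform-gap estimate still applies to the updated configuration --- produces an array $\tilde z^{(k)}\in K^{k+1}$ satisfying
\[ |VDM_k^Q(z^{(k)})|\cdot e^{k\epsilon_0|J_k|}\leq |VDM_k^Q(\tilde z^{(k)})|\leq \delta_k^Q(K)^{k(k+1)/2}. \]
Assuming for contradiction that $|J_k|/k\geq\delta$ for infinitely many $k$, taking the $2/(k(k+1))$ power of this inequality and passing to the limit along such a subsequence, the asymptotic Fekete property forces $\delta^Q(K)\cdot e^{2\epsilon_0\delta}\leq \delta^Q(K)$, which contradicts $\epsilon_0,\delta>0$ and proves the proposition.

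\textbf{Main obstacle.} The only delicate input is the uniform positive gap $\epsilon_0$ in the first paragraph; it relies on both the $f$-admissibility of $Q$ and the subharmonic growth bound (\ref{wkqest}) on $W_{K,Q}^*$ (as noted in Remark \ref{unbrem}, the compactness of $S$ is already guaranteed under hypothesis (\ref{fhyp}) on $f$). Once $\epsilon_0$ is in hand, the swap-and-contradict step is a routine maximization-on-a-compact-set argument with no further potential-theoretic difficulty.
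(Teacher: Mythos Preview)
Your argument is correct and follows essentially the same route as the paper's proof: both use the Bernstein--Walsh estimate (\ref{BWQmod}) to show that swapping a point outside $D(0,M)$ for its maximizer on $S$ increases $|VDM_k^Q|$ by a factor at least $e^{k\epsilon_0}$ (the paper writes this as $1/\rho^k$ with $\rho=\exp\sup_{K\setminus D(0,M)}[W_{K,Q}^*-Q]<1$), iterate over the bad indices, and reach a contradiction with the asymptotic Fekete hypothesis. Your treatment is in fact more careful than the paper's on the one nontrivial point: the paper simply asserts $\rho<1$ ``by definition of $S$ and $M$'', whereas you explicitly establish the uniform positive lower bound on $Q-W_{K,Q}^*$ over $K\setminus D(0,M)$ via lower semicontinuity on the compact annular piece and the growth estimate (\ref{wkqest}) at infinity.
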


\begin{proof} Fix $M>0$ with $S \subset D(0,M)$ and $k$. Suppose $|z^{(k)}_{0}|>M$. Now 
$$z^{(k)}_{0}\to VDM^Q_k(z^{(k)}_{0},...,z^{(k)}_{k})=:p_k(z^{(k)}_{0})q_k(f(z^{(k)}_0))e^{-k Q(z^{(k)}_{0})}$$ 
for polynomials $p_k, q_k$ of degree $k$. Let 
$$H_{K,Q}(z):=W_{K,Q}^*(z)-Q(z).$$
By (\ref{BWQmod}),
$$|p_k(z^{(k)}_{0})q_k(f(z^{(k)}_{0}))|e^{-k Q(z^{(k)}_{0})} \leq \bigl(\max_{w\in S} |p_k(w)q_k(f(w))|e^{-kQ(w)}\bigr)\cdot e^{k[H_{K,Q}(z^{(k)}_0)]}$$
$$\leq  \bigl(\max_{w\in S} |p_k(w)q_k(f(w))|e^{-kQ(w)}\bigr)\cdot \rho^k$$
where, by definition of $S$ and $M$,  
$$\rho=\exp [\sup \{k[H_{K,Q}(z)]: z\in K, \ |z|>M\}]<1$$ 
Thus we can find $\tilde z^{(k)}_{0}\in K\cap D(0,M)$ with 
$$|VDM^Q_k(\tilde z^{(k)}_{0},...,z^{(k)}_{k})|=|p_k(\tilde z^{(k)}_{0})q_k(f(\tilde z^{(k)}_{0}))|e^{-k Q(\tilde z^{(k)}_{0})}=\max_{w\in K\cap D(0,M)} |p_k(w)q_k(f(w))|e^{-kQ(w)}$$
so that
$$|VDM^Q_k(\tilde z^{(k)}_{0},...,z^{(k)}_{k})|\geq |VDM^Q_k(z^{(k)}_{0},...,z^{(k)}_{k})|/\rho^k.$$
If ${\# \{j:z^{(k)}_{j}>M\}}/{k}> \delta$, by applying the same reasoning for each such point $z^{(k)}_{j}$, we obtain a set of $k$ points $\tilde z^{(k)}_{0},...,\tilde z^{(k)}_{k} \in K$ where $\lfloor \delta k\rfloor$ of the ``tilde'' points are new and lie in $K\cap D(0,M)$ with
$$|VDM^Q_k(\tilde z^{(k)}_{0},...,\tilde z^{(k)}_{k})|\geq |VDM^Q_k(z^{(k)}_{0},...,z^{(k)}_{k})|/\rho^{\lfloor \delta k\rfloor\cdot k}.$$
Taking $k(k+1)/2$ roots, we get that 
$$\liminf_{k\to \infty} |VDM^Q_k(\tilde z^{(k)}_{0},...,\tilde z^{(k)}_{k})|^{2/k(k+1)}\geq \frac{\lim_{k\to \infty} |VDM^Q_k(z^{(k)}_{0},...,z^{(k)}_{k})|^{2/k(k+1)}}{\rho^{2\delta}}$$
$$=\delta^Q(K)/\rho^{2\delta} >\delta^Q(K),$$
a contradiction.
\end{proof}

The importance of Proposition \ref{tight} is that the rest of Theorem \ref{sec3}; i.e., parts 2. and 3., follows for this setting of $K\subset \C$ unbounded with $Q$ an $f-$admissible weight on $K$. The uniform tightness allows one to extract a subsequence converging in the weak topology on $\mathcal M(K)$; we omit the details.

\begin{corollary}\label{new2.1} For $K$ and $f$ as in (\ref{fhyp}) and $Q$ an $f-$admissible weight on $K$,\begin{enumerate}
\item we have
$$\delta^Q(K):=\lim_{k\to \infty} \delta_k^Q(K)=\exp{(-E^Q(\mu_{K,Q}))};$$
\item if $\{z_j^{(k)}\}_{j=0,...,k; \ k=2,3,...}\subset K$ and 
$$\lim_{k\to \infty} |VDM_k^Q(z_0^{(k)},...,z_k^{(k)})|^{2/k(k+1)}= \exp{(-E^Q(\mu_{K,Q}))}$$
then 
$$\mu_k=\frac{1}{k+1}\sum_{j=0}^k\delta_{z_j^{(k)}}\to \mu_{K,Q} \ \hbox{weakly}.$$

\end{enumerate}
\end{corollary}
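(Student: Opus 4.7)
The plan is to reduce both parts of the corollary to the compact case of Theorem \ref{sec3}, exploiting that (a) $\supp(\mu_{K,Q})$ is compact by Proposition \ref{exun}, (b) the contact set $S$ in (\ref{stheta}) is compact in both situations of (\ref{fhyp}) by Remark \ref{unbrem}, and (c) Proposition \ref{tight} provides uniform tightness of empirical measures of asymptotically weighted Fekete arrays.

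For part (1), I fix $M$ large enough that the compact set $K_M := K \cap \overline{D(0,M)}$ contains both $S$ and $\supp(\mu_{K,Q})$. Since the Frostman inequalities (\ref{Frost1})--(\ref{Frost2}) for $\mu_{K,Q}$ on $K$ restrict trivially to $K_M$, Proposition \ref{exun}(5) applied on $K_M$ with weight $Q|_{K_M}$ yields $\mu_{K_M,Q|_{K_M}} = \mu_{K,Q}$, hence $E^{Q|_{K_M}}(\mu_{K_M,Q|_{K_M}}) = E^Q(\mu_{K,Q})$. Next I show $\delta_k^Q(K) = \delta_k^{Q|_{K_M}}(K_M)$ for every $k$. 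Fix a tuple $(z_0,\ldots,z_k) \in K^{k+1}$ and an index $j$; the map $z \mapsto VDM_k^Q(z_0,\ldots,z_{j-1},z,z_{j+1},\ldots,z_k)$ has the form $h_k(z)e^{-kQ(z)}$ with $h_k \in \mathcal{F}_k$. The weighted Bernstein--Walsh estimate (\ref{BWQmod}) yields $\|h_k e^{-kQ}\|_K = \|h_k e^{-kQ}\|_S$ together with the strict inequality $|h_k(z)e^{-kQ(z)}| < \|h_k e^{-kQ}\|_S$ for all $z \in K \setminus S$ (whenever $h_k \not\equiv 0$), so the supremum in the $j$-th slot over $K$ is attained in $S \subset K_M$. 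Iterating coordinate-by-coordinate replaces any tuple by one in $K_M^{k+1}$ without decreasing $|VDM_k^Q|$, whence $\sup_{K^{k+1}}|VDM_k^Q| = \max_{K_M^{k+1}}|VDM_k^Q|$. Applying Theorem \ref{sec3}(2) to the compact set $K_M$ then gives $\delta_k^Q(K) = \delta_k^{Q|_{K_M}}(K_M) \to \exp(-E^Q(\mu_{K,Q}))$, proving (1).

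For part (2), let $\{z_j^{(k)}\}$ be an asymptotically weighted Fekete array and set $\mu_k := \frac{1}{k+1}\sum_{j=0}^{k}\delta_{z_j^{(k)}}$. Choosing $M$ with $S \subset D(0,M)$, Proposition \ref{tight} gives, for every $\delta > 0$, $\mu_k(K\setminus \overline{D(0,M)}) \leq \delta$ for all sufficiently large $k$; hence $\{\mu_k\}$ is uniformly tight. By Prokhorov's theorem, every subsequence admits a further subsequence converging weakly to some $\mu \in \mathcal{M}(K)$. Since part (1) of Theorem \ref{sec3} holds in the unbounded setting,
\[
\exp(-E^Q(\mu_{K,Q})) = \lim_k |VDM_k^Q(z_0^{(k)},\ldots,z_k^{(k)})|^{2/k(k+1)} \leq \exp(-E^Q(\mu)),
\]
so $E^Q(\mu) \leq E^Q(\mu_{K,Q}) = V_w$. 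Proposition \ref{exun}(3) then forces $\mu = \mu_{K,Q}$; since every subsequence of $\{\mu_k\}$ has a further subsequence converging weakly to the same limit, the full sequence converges weakly to $\mu_{K,Q}$.

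The main obstacle is the coordinate-wise Bernstein--Walsh reduction in part (1), which requires not only that $S$ be compact but also that (\ref{BWQmod}) be genuinely strict off $S$; the former is Remark \ref{unbrem}, while the latter follows from the definition $S = \{W_{K,Q}^*\geq Q\}$. In parallel, the analogous worry for part (2) -- that mass might escape to infinity in the weak limit -- is precisely what Proposition \ref{tight} rules out. Both reductions hinge on the $f$-admissibility of $Q$ forcing the kernel (\ref{def-k}) to grow at infinity.
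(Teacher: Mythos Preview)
Your proof is correct and aligned with what the paper sketches (the paper itself omits details, saying only that ``uniform tightness allows one to extract a subsequence converging in the weak topology''). Part (2) is exactly the intended argument: Proposition~\ref{tight} gives tightness, Prokhorov yields subsequential limits, the upper bound (\ref{upboundVDM}) together with uniqueness in Proposition~\ref{exun}(3) pins down the limit.

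For part (1) your route is slightly more direct than what the paper seems to have in mind. Rather than invoking tightness of asymptotic Fekete arrays, you show outright that $\delta_k^Q(K)=\delta_k^{Q|_{K_M}}(K_M)$ for every $k$ via the coordinate-wise Bernstein--Walsh replacement, and then quote Theorem~\ref{sec3}(2) on the compact set $K_M$ after identifying $\mu_{K_M,Q|_{K_M}}=\mu_{K,Q}$ through Proposition~\ref{exun}(5). This sidesteps the need to first establish existence of (exact) weighted Fekete points in the unbounded setting and avoids a subsequence argument altogether. The key estimate is the same one driving the proof of Proposition~\ref{tight}, namely (\ref{BWQmod}), so the two approaches are close cousins; yours just applies it one level earlier. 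A minor remark: the \emph{strict} inequality off $S$ that you emphasize is not actually needed---what you use is that $\|h_ke^{-kQ}\|_K=\|h_ke^{-kQ}\|_S$ (shown in the proof of Proposition~\ref{bwest}) and that $S$ is compact, so the supremum in each slot is attained in $S\subset K_M$.
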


In the setting of an unbounded set $K$ and an $f-$admissible weight $Q$, in order to have an analogue of the $\{Z_k=Z_k(\nu)\}$ asymptotics in Proposition \ref{biorthfree} we need some restriction on allowable measures $\nu$ related to $Q$ ensuring finiteness of these quantities. Given a $\sigma$-finite measure $\nu$ on $K$ one can impose the condition that \begin{equation}\label{masscond} \exists \alpha >0,\quad\int_{K}\epsilon(z)^{\alpha}d\nu(z)<\infty\end{equation}
where $\epsilon(z)$ is some nonnegative continuous function that tends to 0 as $|z|$ tends to $\infty$ through points in $K$. For simplicity, we take
\begin{equation}\label{masscond1} -\log \epsilon(z)\leq Q(z)-\log |zf(z)| \end{equation}
where the right-hand-side goes to infinity as $|z|$ tends to $\infty$ through points in $K$ by the $f-$admissibility of $Q$. For such triples $(K,Q,\nu)$, we use the same definition of a weighted Bernstein-Markov type inequality as in Definition \ref{BMWtype2}. 

We note that if $\nu$ satisfies a weighted Bernstein-Markov type inequality on any compact neighborhood of $S$ in (\ref{stheta}), then it satisfies a weighted Bernstein-Markov type inequality on $K$. Combining this observation with the examples given in Example \ref{bmex}, we see that, for appropriate unbounded $K \subset \R$ or $\C$, Lebesgue measure is a strong Bernstein-Markov measure in the setting of (\ref{fhyp}).

Using  (\ref{BWQmod}) in Proposition \ref{bwest} one can prove the analogue of Lemma 8.2 from \cite{LPLD} in our setting.

\begin{lemma} \label{nlemma} For $K$ and $f$ as in (\ref{fhyp}), let $Q$ be $f-$admissible and let $\nu$ be a $\sigma-$finite measure such that $(K,Q,\nu)$ satisfies (\ref{masscond}) and a weighted Bernstein-Markov type inequality. We can find a closed neighborhood $N$ of $S$ (see (\ref{stheta})) and a constant $c>0$ independent of $k$ such that, for all $h_k \in \mathcal F_k$, 
\begin{equation}\label{lemma82}\int_{K}|h_k(z)|e^{-k Q(z)}d\nu(z) \leq(1+\OO(e^{-ck}))\int_{N}|h_k(z)|e^{-k Q(z)}d\nu(z). \end{equation}

\end{lemma}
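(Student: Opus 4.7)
The plan is to use the weighted Bernstein-Walsh estimate (\ref{BWQmod}) from Proposition \ref{bwest} to control $|h_k|e^{-kQ}$ pointwise, combine with the integrability condition (\ref{masscond}) to show that $\int_{K\setminus N}|h_k|e^{-kQ}\,d\nu$ is geometrically small in $k$ relative to $\|h_ke^{-kQ}\|_K$, and close the loop via the weighted Bernstein-Markov hypothesis (\ref{BMWtype2}) with a sufficiently small parameter.

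Set $H(z):=W^*_{K,Q}(z)-Q(z)$, so that (\ref{BWQmod}) reads $|h_k(z)|e^{-kQ(z)}\le \|h_ke^{-kQ}\|_S\,e^{kH(z)}$, and note that $\|h_ke^{-kQ}\|_S=\|h_ke^{-kQ}\|_K$ since $W^*_{K,Q}<Q$ off $S$. Using (\ref{wkqest}) from Remark \ref{unbrem} together with $V_{D_A}(w)\le\log^+|w|+C_A$, a short case analysis on whether $|f(z)|\ge 1$ (the case $|f(z)|<1$ being handled by $f$-admissibility, which then forces $Q(z)-\log|z|\to\infty$) combined with (\ref{masscond1}) produces a constant $C_0$ such that for $|z|$ large in $K$,
\[
e^{kH(z)}\le (e^{C_0})^k\,\epsilon(z)^k.
\]
Now pick a closed neighborhood $N$ of the (compact) set $S$ from (\ref{stheta}) so that on $K\setminus N$ the above holds and $\epsilon(z)\le\eta$ with $e^{C_0}\eta\le \tfrac{1}{2}$. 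For $k\ge\alpha$, split $\epsilon(z)^k=\epsilon(z)^{k-\alpha}\epsilon(z)^\alpha\le\eta^{k-\alpha}\epsilon(z)^\alpha$; then (\ref{masscond}) yields
\[
\int_{K\setminus N}e^{kH(z)}\,d\nu(z)\le C_1\,2^{-k}.
\]
Combined with Bernstein-Walsh, this gives $\int_{K\setminus N}|h_k|e^{-kQ}\,d\nu\le C_1\,2^{-k}\,\|h_ke^{-kQ}\|_K$.

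To finish, apply (\ref{BMWtype2}) with a parameter $\epsilon_0<\log 2$ to obtain $\|h_ke^{-kQ}\|_K\le C_2\,e^{\epsilon_0 k}\,A_k$ where $A_k:=\int_K|h_k|e^{-kQ}\,d\nu$. With $B_k:=\int_N|h_k|e^{-kQ}\,d\nu$, we then have $A_k=B_k+\int_{K\setminus N}|h_k|e^{-kQ}\,d\nu\le B_k+C_3\,e^{-ck}A_k$ with $c:=\log 2-\epsilon_0>0$; for $k$ large this self-improves to $A_k\le(1+\OO(e^{-ck}))B_k$, which is (\ref{lemma82}). The main obstacle is the first step, i.e., establishing the pointwise decay $e^{kH(z)}\le C^k\epsilon(z)^k$ on the tail $K\setminus N$: the case $|z|,|f(z)|\ge 1$ falls out cleanly from (\ref{wkqest}) and (\ref{masscond1}), whereas the range $|f(z)|<1$ at $|z|$ large requires one to absorb a missing $-\log|f(z)|$ contribution using the $f$-admissibility of $Q$. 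Once the decay is in hand, the concluding self-improvement is a standard trick that works whenever the Bernstein-Markov parameter $\epsilon_0$ is below the geometric rate $\log 2$.
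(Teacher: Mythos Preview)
Your overall architecture is exactly what the paper has in mind: it gives no detailed proof but says the result follows from the weighted Bernstein--Walsh estimate (\ref{BWQmod}) of Proposition \ref{bwest}, in analogy with Lemma 8.2 of \cite{LPLD}. Your three-step outline (pointwise Bernstein--Walsh control of $|h_k|e^{-kQ}$, geometric decay of the tail integral via (\ref{masscond}), and the self-improving inequality using (\ref{BMWtype2}) with small $\epsilon_0$) is precisely that scheme, and the closing ``$A_k\le B_k+C_3e^{-ck}A_k$'' trick is the right way to finish.

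There is, however, a real gap in the step you yourself flag as the main obstacle. From (\ref{wkqest}) and $V_{D_A}(w)\le \log^+|w|+C_A$ you get
\[
H(z)\le C_0+\log^+|z|+\log^+|f(z)|-Q(z),
\]
while (\ref{masscond1}) only gives $\log\epsilon(z)\ge \log|z|+\log|f(z)|-Q(z)$. When $|z|$ is large and $|f(z)|\ge 1$ these match and your bound $e^{H(z)}\le e^{C_0}\epsilon(z)$ follows. When $|f(z)|<1$, though, $\log^+|f(z)|=0$ exceeds $\log|f(z)|$ by $-\log|f(z)|>0$, and your appeal to $f$-admissibility only yields $Q(z)-\log|z|\to\infty$, hence $H(z)\to-\infty$; it does \emph{not} give $H(z)\le C+\log\epsilon(z)$. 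Concretely, if $|f(z)|\to 0$ along $K$ and one takes $\epsilon(z)=|zf(z)|e^{-Q(z)}$ (allowed by (\ref{masscond1})), then $e^{H(z)}/\epsilon(z)$ is of order $1/|f(z)|\to\infty$, so neither the pointwise bound $e^{kH}\le C^k\epsilon^k$ nor the integrability $\int e^{\alpha H}d\nu\le C\int \epsilon^\alpha d\nu$ can be deduced. The argument goes through cleanly if either $|f|$ is bounded below on the tail of $K$ (as in the motivating examples $f(z)=z^\theta$, $e^z$, $\log z$ on $(0,\infty)$) or if (\ref{masscond1}) is read with $\log^+|z|+\log^+|f(z)|$ in place of $\log|zf(z)|$; you should make one of these explicit rather than asserting that $f$-admissibility absorbs the missing term.
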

 
From the lemma, as in section 5, one immediately obtains analogues of free energy asymptotics (Proposition \ref{biorthfree}) and hence Corollary \ref{johansson} and the ${\bf P}$-a.e. convergence result for arrays as in Corollary \ref{416}. A large deviation principle can also be obtained when $Q$ is strongly $f-$admissible; here, Lemma \ref{approx1} can be utilized.

Finally, we remark that using the methods of this paper, many results can be extended to cases where the discrete weighted energy minimization problem (see \ref{fvdm}) involves products of three or more Vandermonde factors.

{\obeylines
\texttt{T. Bloom, bloom@math.toronto.edu
University of Toronto, Toronto, Ontario M5S 2E4 Canada
\medskip
N. Levenberg, nlevenbe@indiana.edu
Indiana University, Bloomington, IN 47405 USA
\medskip
V. Totik, totik@mail.usf.edu
University of South Florida, Tampa, FL 33620 USA
\medskip
F. Wielonsky, wielonsky@cmi.univ-mrs.fr
Universit\'e Aix-Marseille, CMI 39 Rue Joliot Curie
F-13453 Marseille Cedex 20, FRANCE }
}


\begin{thebibliography}{GGK2}

\bibitem{An} A. Ancona, Sur une conjecture concernant la capacit\'e et l'effilement, Th\'eorie du potentiel (Orsay, 1983), \emph{Lecture Notes in Math.}, 1096, Springer, Berlin, 1984, 34-68. 


\bibitem{BLmass}T. Bloom and N. Levenberg, Capacity convergence results and applications to a Bernstein-Markov inequality, \emph{Trans. of AMS}, \textbf{351} (1999), no. 12, 4753-4767.


\bibitem{VELD} T. Bloom, N. Levenberg, and F. Wielonsky, Vector Energy and Large Deviation, \emph{Journal d'Analyse Math.}, \textbf{125} (2015), no. 1, 139-174.

\bibitem{LPLD} T. Bloom, N. Levenberg, and F. Wielonsky, Logarithmic Potential Theory and Large Deviation, to appear in \emph{Computational Methods and Function Theory}, DOI 10.1007/s40315-015-0120-4

 \bibitem{Bo}
  A. Borodin, Biorthogonal ensembles, \emph{Nucl.Phys.B}, \textbf{536} (3) (1999), 704-732.

\bibitem{CGZ} D. Chafai, N. Gozlan and P. Zitt, First-order global asymptotics for confined particles with singular pair repulsion, \emph{Annals of Applied Prob.}, \textbf{24} (2014), no. 6, 2371-2413.
    \bibitem{CH}
D. Cheliotis, Triangular matrices and biorthogonal ensembles, arXiv:1404.4730.


\bibitem{CW} T. Claeys and D. Wang, Random matrices with equispaced external source, \emph{Comm. Math. Phys.}, \textbf{328} (2014), no. 3, 1023-1077. 

\bibitem{DZ} A. Dembo and O. Zeitouni, \emph{Large deviations techniques and applications}, Stochastic Modelling and Applied Probability, 38, Springer-Verlag, Berlin, 2010.

\bibitem{ESS}
P. Eichelsbacher, J. Sommerauer, and M. Stolz, Large deviations for disordered bosons and multiple orthogonal polynomial ensembles, \emph{J. Math. Phys.}, \textbf{52}, 073510 (2011).

\bibitem{FW} P. Forrester and D. Wang, Muttalib-Borodin ensembles in random matrix theory --- realisations and correlation functions, arXiv:1502.07147

\bibitem{HEL} L.L. Helms, \emph{Potential Theory}, Universitext, Springer-Verlag, Berlin, 2009.

 
 \bibitem{L} N.S. Landkof, \emph{Foundations of Modern Potential Theory}, Grundlehren der Mathematischen Wissenschaften, \textbf{180}, Springer-Verlag, Berlin, (1972).

\bibitem{LSZ} T. Leuck, H.-J. Sommers, and M. R. Zirnbauer, Energy correlation for a random matrix model of disordered bosons, \emph{J. Math. Physics}, \textbf{47} (10), 103304 (2006).
 
 \bibitem{LSS} D. Lubinsky, A. Sidi and H. Stahl, Asymptotic Zero Distribution of Biorthogonal Polynomials, \emph{Journal of Approximation Theory}, \textbf{190} (2015), 26-49.
 
\bibitem{Mu} K. A. Muttalib, Random matrix models with additional interactions, \emph{J. Phys. A} \textbf{28} (1995), no. 5, L159-L164.
 
 \bibitem{R} T. Ransford, \emph{Potential theory in the complex
plane}, Cambridge University Press, Cambridge, 1995.



\bibitem {ST} E. Saff and V. Totik, \emph{Logarithmic potentials with external fields}, Springer-Verlag, Berlin, 1997.



\end{thebibliography}
\end{document}